\documentclass[onecolumn]{article}
\usepackage[utf8]{inputenc}
\usepackage[english]{babel}
\usepackage{float}
\usepackage{amsmath}
\usepackage{geometry}
\usepackage{algorithm}
\usepackage{algpseudocode}
\usepackage{subcaption}
\usepackage{wrapfig}
\usepackage{amssymb}
\usepackage{amsthm}
\usepackage{aliascnt}
\usepackage[breaklinks=true]{hyperref}
\usepackage{breakcites}
\usepackage{hyperref}
\usepackage{graphicx}
\usepackage{subcaption}
\usepackage{tikz}
\usepackage{mathtools}
\usepackage{lmodern}
\usepackage{url}
\usepackage{hyperref}
\usepackage[nameinlink]{cleveref}
\usepackage{algorithm}
\usepackage{appendix} 
\usepackage{algpseudocode}
\usepackage{multicol}
\usepackage[firstinits=true,maxbibnames=99,doi=false,url=false,eprint=true,
backend=biber,style=numeric]{biblatex}
\usepackage[export]{adjustbox}
\usepackage{xr-hyper}
\usetikzlibrary{arrows}
\usetikzlibrary{decorations.markings}
\graphicspath{ {images/} }
\captionsetup{labelfont=rm}

\usepackage{booktabs}

\usepackage{enumerate}
\addbibresource{references.bib}

\hypersetup{
  colorlinks = true,
  allcolors = green!50!black,
  urlcolor = red!50!black,
}

\theoremstyle{theorem}
    \newtheorem{theorem}{Theorem}
    \numberwithin{theorem}{section}
	\crefname{theorem}{Theorem}{Theorems}

\theoremstyle{definition}
    \newaliascnt{definition}{theorem}
    
    \aliascntresetthe{definition}
	\crefname{definition}{Definition}{Definitions}

\theoremstyle{corollary}
    \newaliascnt{corollary}{theorem}
    
    \aliascntresetthe{corollary}
	\crefname{corollary}{Corollary}{Corollarys}
    
\theoremstyle{proposition}
    \newaliascnt{proposition}{theorem}
    \newtheorem{proposition}[proposition]{Proposition}
    \aliascntresetthe{proposition}
	\crefname{proposition}{Proposition}{Propositions}
    
\theoremstyle{ex}
    \newaliascnt{ex}{theorem}
    \newtheorem{ex}[ex]{Example}
    \aliascntresetthe{ex}
	\crefname{ex}{Example}{Examples}
    
\theoremstyle{lemma}
    \newaliascnt{lemma}{theorem}
    \newtheorem{lemma}[lemma]{Lemma}
    \aliascntresetthe{lemma}
	\crefname{lemma}{Lemma}{Lemmas}
    
\theoremstyle{defprop}
    \newaliascnt{defprop}{theorem}
    \newtheorem{defprop}[defprop]{Definition and Proposition}
    \aliascntresetthe{defprop}
	\crefname{defprop}{Definition and Proposition}{Definition and Propositions}

\theoremstyle{remark}
    \newaliascnt{remark}{theorem}
    \newtheorem{remark}[remark]{Remark}
    \aliascntresetthe{remark}
	\crefname{remark}{Remark}{Remarks}
    
\theoremstyle{assumption}
    \newaliascnt{assumption}{theorem}
    \newtheorem{assumption}[assumption]{Assumption}
    \aliascntresetthe{assumption}
	\crefname{assumption}{Assumption}{Assumptions}
  
\crefname{figure}{Figure}{Figure}
\crefname{table}{Table}{Table}

\numberwithin{figure}{section}
\numberwithin{table}{section}
\numberwithin{equation}{section}
\numberwithin{theorem}{section}
\numberwithin{algorithm}{section}

\makeatletter
 \let\c@algorithm\c@theorem
\makeatother
 
\makeatletter

\makeatother

\newcommand\keywordsname{Key words}
\newcommand\keywordname{Key word}
\newcommand\AMSname{AMS subject classifications}
\newcommand\AMname{AMS subject classification}

\newenvironment{@abssec}[1]{%
     \if@twocolumn
       \section*{#1}%
     \else
       \vspace{.05in}\footnotesize
       \parindent .2in
         {\upshape\bfseries #1. }\ignorespaces 
     \fi}
     {\if@twocolumn\else\par\vspace{.1in}\fi}
     
\newenvironment{keywords}{\begin{@abssec}{\keywordsname}}{\end{@abssec}}

\newenvironment{AMS}{\begin{@abssec}{\AMSname}}{\end{@abssec}}

\author{Martin Holler \and Andreas Habring\thanks{Institute of Mathematics and Scientific Computing, University of Graz. MH: \href{mailto:martin.holler@uni-graz.at}{martin.holler@uni-graz.at}, \url{https://imsc.uni-graz.at/hollerm}, AH: \href{mailto:andreas.habring@uni-graz.at}{andreas.habring@uni-graz.at}.}}

\usepackage{amsopn}

\usepackage{filecontents}
\title{A Generative Variational Model for Inverse Problems in Imaging\thanks{The authors acknowledge funding by the Austrian Research Promotion Agency (FFG) (Project number 881561). MH further is a member NAWI Graz (\url{https://www.nawigraz.at}) and BioTechMed Graz (\url{https://biotechmedgraz.at}).}}

\pgfdeclarelayer{bg}    %
\pgfsetlayers{bg,main}

\DeclareMathAlphabet{\mymathbb}{U}{BOONDOX-ds}{m}{n}

\ifpdf
  \DeclareGraphicsExtensions{.eps,.pdf,.png,.jpg}
\else
  \DeclareGraphicsExtensions{.eps}
\fi

\newcommand\norm[1]{\left\lVert#1\right\rVert}

\newcommand{\R}{\mathbb{R}}
\newcommand{\N}{\mathbb{N}}

\newcommand{\Dc}{\mathcal{D}}
\newcommand{\Rc}{\mathcal{R}}
\newcommand{\Gc}{\mathcal{G}}
\newcommand{\Mc}{\mathcal{M}}
\newcommand{\Jc}{\mathcal{J}}
\newcommand{\Ic}{\mathcal{I}}
\newcommand{\Lc}{\mathcal{L}}
\newcommand{\Ec}{\mathcal{E}}

\newcommand{\conv}{\ast}
\newcommand{\st}{\,|\,}

\DeclareMathOperator*{\TV}{TV}
\DeclareMathOperator*{\BV}{BV}

\DeclareMathOperator*{\KL}{KL}

\usepackage{amsopn}

\ifpdf
\hypersetup{
  pdftitle={A Generative Variational Model for Inverse Problems in Imaging},
  pdfauthor={A. Habring and M. Holler}
}\fi

\externaldocument{b_supplement}

\begin{document}
 
\maketitle

\begin{abstract}
This paper is concerned with the development, analysis and numerical realization of a novel variational model for the regularization of inverse problems in imaging. The proposed model is inspired by the architecture of generative convolutional neural networks; it aims to generate the unknown from variables in a latent space via multi-layer convolutions and non-linear penalties, and penalizes an associated cost. In contrast to conventional neural-network-based approaches, however, the convolution kernels are learned directly from the measured data such that no training is required.
 
The present work provides a mathematical analysis of the proposed model in a function space setting, including proofs for regularity and existence/stability of solutions, and convergence for vanishing noise. Moreover, in a discretized setting, a numerical algorithm for solving various types of inverse problems with the proposed model is derived. Numerical results are provided for applications in inpainting, denoising, deblurring under noise, super-resolution and JPEG decompression with multiple test images.
\end{abstract}

\begin{keywords}
  Inverse problems, data science, mathematical imaging, generative neural networks, machine learning, deep image prior, convolutional sparse coding
\end{keywords}

\begin{AMS}
49J27 $\cdot$ 65J20 $\cdot$ 68T07 $\cdot$ 94A08
\end{AMS}

\section{Introduction}
 
Inverse problems require the inversion of a forward model $A:X \rightarrow Y$, that describes the relation of an unknown $u \in X$ to some given (possibly noisy) data $y \in Y$, i.e., they require the solution of $A(u) \approx y$ for $u \in X$.
Characteristic about inverse problems is that this inversion is ill-posed, meaning that a classical inverse of $A$ is not well-defined and/or, even more severely, that a direct inversion of $A$ is unstable in the sense that small deviations (such as noise) in the given data can lead to large deviations in a corresponding solution.
 
Variational regularization is a well-established method for solving inverse problems. It defines an approximate solution of an inverse problem as solution of a minimization problem of the form
\begin{equation} \label{eq:var_model_example_intro}
 \min_{u} \Dc_{y}(A(u)) + \lambda \Rc (u),
\end{equation}
where $\Dc_{y}:Y \rightarrow [0,\infty]$ is discrepancy term measuring the distance of $A(u)$ to the given data, $\Rc:X \rightarrow [0,\infty]$ is a regularization functional and $\lambda>0$ is a (regularization) parameter.
Advantages of such an approach are as follows:
\begin{enumerate}[i)]
\item With a suitable choice of $\Rc$, the mapping $y \mapsto [u \text{ solving \eqref{eq:var_model_example_intro}} ]$ can be shown to be continuous (in an appropriate sense), meaning that variational methods allow to obtain a continuous mapping from given data to a corresponding solution. This is most evident when analyzing inverse problems in infinite dimensional spaces, as only in this case, there is a clear distinction between continuous and non-continuous solution methods even in a linear setting.
\item The functional $\Rc$ allows to incorporate any prior knowledge that one might have on the unknown $u$. This is in particular important in imaging, where the unknown corresponds to image data that is typically highly structured. Exploiting such structure in the unknown via suitable choices of $\Rc$ has proven to be highly beneficial for the quality of the obtained reconstruction and can be seen as one of the main reasons for the success of variational methods for inverse problems.
\end{enumerate}
In the past decades, a main focus of research on variational methods for inverse problems in imaging and beyond was on sparsity-based modeling of the unknown, either via sparsity in some transform domain \cite{mallat2009wavelettour_mh} or sparsity of (higher-order) derivatives \cite{rudin1992tv_mh,holler20ip_review_mh}. In this context, significant progress has been made in terms of both practical performance and analytic understanding of variational methods, with examples for the latter being, for instance, recovery guarantees, a description of the structure of solutions, or convergence results for vanishing noise.
 
More recently, (machine-)learning-based approaches have become increasingly popular also in inverse problems, mainly for their practical performance for instance in image processing \cite{arridge19_inverse_learning_review_mh}. The latter is particularly striking for more complex image data containing repeating patterns or texture, a class of image data, where classical variational models still show limitations and are outperformed also by approaches such as dictionary- or patch-based methods \cite{Lebrun12denoising_review_mh}. 
So far, however, the level of analytic understanding of machine learning methods, in particular in view of an analysis for inverse problems in function space, is far from the one of variational methods. Furthermore, the requirement of large sets of training data is a limiting factor for the application of machine learning methods to inverse problems, where the availability of training data is often very limited. 
 
The present work is a step towards overcoming these shortcomings; it is concerned with the development, analysis, and numerical realization of a novel variational model for inverse problems in imaging. The proposed approach uses techniques from generative models in machine learning to overcome a limited performance of existing variational methods in particular for highly structured image data such as texture, see \cref{fig_inpainting_intro} for an inpainting-example. In contrast to existing works on conventional machine learning approaches, however, our approach does not rely on training data, but enforces regularity purely by the architecture of the underlying network itself. Further, our model is amenable to analysis in function space, such that analytic guarantees on well-posedness, stability, and regularity of solutions for general inverse problems can be ensured independently of discretizations.
 
Analytically, our work can be seen as a multi-layer extension of \cite{Chambolle2020}, where our numerical experiments confirm a significantly improved practical performance resulting from the introduction of a multi-layer structure. Conceptually, our approach is also closely related to the recent works on \emph{deep image priors} \cite{deep_image_prior}, and we refer to \Cref{sec:related_works} for a detailed discussion of relevant works in that context.

\subsection{The Proposed Approach}
 
\begin{figure}
\centering
\includegraphics[width = 3cm]{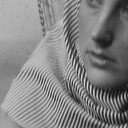}%
\includegraphics[width = 3cm]{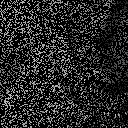}%
\includegraphics[width = 3cm]{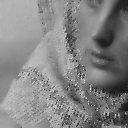}%
\includegraphics[width = 3cm]{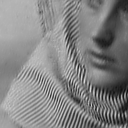}
\caption{Inpainting from 30\% known pixels. Left to right: ground truth, corrupted, reconstruction with total generalized variation regularization \cite{bredies2010tgv}, reconstruction with the proposed method.}
\label{fig_inpainting_intro}
\end{figure}
 
We consider a class of methods where a low dimensional variable in a so-called latent space $Z$ is mapped to an image $u \in X$ via a neural network. Trained appropriately, these networks can be regarded as parametrizations of certain classes of images. Due to their ability to generate images of a given class/distribution we refer to these networks as \emph{generative}. In view of inverse problems, a direct application of a pre-trained generative neural network, denoted by $g_\theta:Z \rightarrow X$, where $\theta$ summarizes all pre-trained parameters, would be to solve
\[ \min_{z} \Dc_{y} (A(g_\theta (z)) \]
for reconstruction, where again $A$ defines the forward model and $\Dc_{y}$ is a data-discrepancy term.
 
In order to avoid training, works such as Ulyanov et.al \cite{deep_image_prior} instead learn also the parameters $\theta$ directly from the measured data by solving
\[ \min_{z,\theta} \Dc_{y} (A(g_\theta (z)) .\]
(Note that the original work \cite{deep_image_prior} leaves $z$ fixed in practice). Surprisingly, this yields rather impressive results for different inverse problems in imaging, suggesting that already the architecture of the generator comprises a good prior for image data. 
 
A problem with this approach, however, is that a generator of sufficient size may be able to produce almost any given image, even random noise \cite{veen2018compressed}, meaning that the above model has no regularizing effect at all. To account for that, \cite{deep_image_prior} relies on early stopping of the numerical solution algorithm (as well as different heuristics), arguing that the generator usually fits natural images quite fast, but needs more time to fit noise.
 
Noting the relation of such approaches to convolutional-sparse coding methods \cite{Chambolle2020,papyan2017convolutional}, our approach realizes a similar idea, but within the framework of variational methods in function space, which are amenable to mathematical analysis. As regularization, we define a generative prior $\Gc:X \rightarrow [0,\infty]$ that generates the unknown $u \in X$ from multi-layer convolutions of a variable in latent space in a way, that is optimal with respect to an associated cost. In detail, we define $\Gc$ as
\begin{equation}\label{eq_texture_prior_intro}
  \mathcal{G}(u) = \inf\limits_{\substack{\mu = (\mu^l)_{l=1}^L, \\ \theta=(\theta^l)_{l=1}^L}}  \norm{\mu}_\mathcal{M} + \sum\limits_{k,l} \Jc_{l-1} (\mu^{l-1}_{k} - \sum\limits_{n=1}^{N_l} \mu^l_{n}*\theta^l_{n,k})
  \quad \text{subject to }
  \begin{cases}
 u = \sum\limits_{n=1}^{N_1} \mu^1_{n}*\theta^1_{n,1},\\
  \norm{\theta^l_{n,k}}_2 \leq 1 ,  \quad \text{for all $n,k,l$}, \\ %
  Z\theta = 0
  \end{cases}
\end{equation}
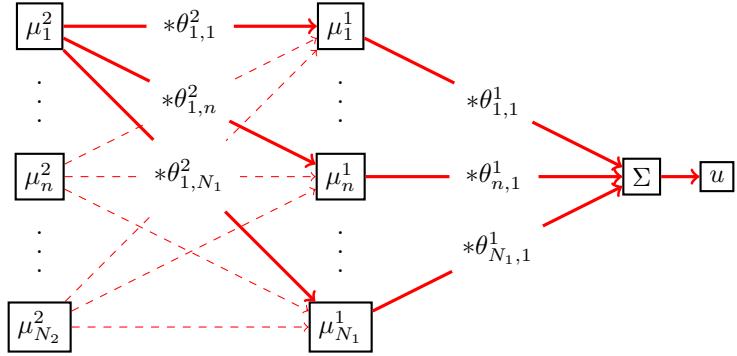
\begin{figure}
\centering
\begin{tikzpicture}
\begin{scope}[every node/.style={rectangle,thick,draw}]
    \node (v) at (15,2) {$u$};
    \node (sum) at (14,2) {$\Sigma$};
    \node (11) at (10,4) {$\mu^1_1$};
    \node (12) at (10,2) {$\mu^1_n$};
    \node (13) at (10,0) {$\mu^1_{N_1}$};
    \node (21) at (6,4) {$\mu^2_1$};
    \node (22) at (6,2) {$\mu^2_{n}$};
    \node (23) at (6,0) {$\mu^2_{N_2}$};
\end{scope}
    \node  at (10,3.25) {.};
    \node  at (10,3) {.};
    \node  at (10,2.75) {.};
    
    \node  at (10,1.25) {.};
    \node  at (10,1) {.};
    \node  at (10,0.75) {.};
    
    \node  at (6,3.25) {.};
    \node  at (6,3) {.};
    \node  at (6,2.75) {.};
    
    \node  at (6,1.25) {.};
    \node  at (6,1) {.};
    \node  at (6,0.75) {.};
 
\begin{scope}[->,
              every node/.style={fill=white,circle},
              every edge/.style={draw=red,very thick}]
              
    \path [->] (21) edge node {$*\theta^2_{1,1}$} (11);
    \path [->] (21) edge node {$*\theta^2_{1,n}$} (12);
    \path [->] (21) edge node {$*\theta^2_{1,N_1}$} (13);
    \begin{pgfonlayer}{bg}    %
        \path [->] (22) edge[dashed,thin] (11); %
        \path [->] (22) edge[dashed,thin] (12); %
        \path [->] (22) edge[dashed,thin] (13); %
        \path [->] (23) edge[dashed,thin] (11); %
        \path [->] (23) edge[dashed,thin] (12); %
        \path [->] (23) edge[dashed,thin] (13); %
    \end{pgfonlayer}
 
    \path [->](11) edge node {$*\theta^1_{1,1}$} (sum);
    \path [->](12) edge node {$*\theta^1_{n,1}$} (sum);
    \path [->](13) edge node {$*\theta^1_{N_1,1}$} (sum);
    \path [->](sum) edge (v);
\end{scope}
\end{tikzpicture}
\caption{Sketch of the generative network with $L=2$ layers.}
\label{fig_sketch_network}
\end{figure}
see \cref{fig_sketch_network} for a visualization. Here, the latent variables $\mu^l$ are modeled as measures, the norm $\|\cdot \|_\Mc$ denotes the Radon norm (the measure-space counterpart of the $L^1$ norm) and the $\theta^l$ denote filter kernels (modeled as $L^2$ functions), which are normalized by the constraint $\|\theta_{n,k}^l \|_2 \leq 1$. The functionals $\Jc_l$ can for instance be indicator functions of $\{0\}$, i.e., $\Jc_l(x) = 0$ if $x=0$ and $\Jc_l(x) = \infty$ else,  to ensure that $u$ is indeed generated from a sequence of multi-layer convolutions of the latent variable $\mu^L$ in the deepest layer. However, we allow for more general $\Jc_l$, including also smooth relaxations, since our numerical implementation relies on differentiability of the $\Jc_l$. Non-linearities in the multi-layer convolution are replaced by non-linear penalties on the latent variables of the intermediate layers. The last constraint $Z\theta = 0$, with $Z$ any linear, bounded operator, allows as to add some additional constraints on the filter kernels, e.g., to have zero mean. For a detailed introduction of the generative prior $\Gc$, we refer to \Cref{sec_texture_prior}.

\begin{figure}
\centering
\includegraphics[width = 3cm]{images_gen_reg_inpainting_barbara_crop_original.png}%
\includegraphics[width = 3cm]{images_gen_reg_inpainting_barbara_crop_corrupted.png}%
\includegraphics[width = 3cm]{images_gen_reg_inpainting_barbara_crop_recon.png}%
\includegraphics[width = 3cm]{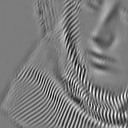}%
\includegraphics[width = 3cm]{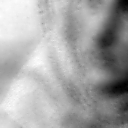}
\caption{Inpainting from 30\% known pixels with the proposed method. Left to right: ground truth image, corrupted image, reconstruction $u$, generative part $v$, remaining part $u-v$.}
\label{fig_decomp_intro}
\end{figure}
 
In this work $\Gc$ as in \eqref{eq_texture_prior_intro} is analyzed and applied as regularization for general inverse problems. Our motivation for introducing $\Gc$ is to capture image structure, such as repeating patterns, that is not captured well by classical variational methods such as Tikhonov or total-variation regularization. Moreover, the multi-layer approach inspired by the architecture of convolutional neural networks shows to be advantageous in practice compared to a single layer convolutional model (see \Cref{experiments}).

On the other hand, many existing variational methods are already very well suited for simpler image structures such as piecewise smooth regions.
Furthermore, our analysis in \Cref{chapter_theory} shows that generative priors such as $\Gc$ will always produce continuous image structures as output. Accounting for these facts,
 we allow for the infimal convolution of $\Gc$ with a second regularization functional (such as the total variation functional \cite{rudin1992tv_mh}) in our overall approach to the regularization of inverse problems. This results in a variational energy of the form
\begin{equation}\label{eq_problem_thesis}
\begin{gathered}
\min_{u,v} \lambda \mathcal{D}_{y}(A(u)) + s_\Rc (\nu) \mathcal{R}(u-v) + s_\Gc (\nu) \mathcal{G}(v),
\end{gathered}
\end{equation}
with $\lambda\in (0,\infty)$ a regularization parameter and $s_\Rc (\nu) = \frac{\nu}{\min (\nu, 1-\nu)}$ and $s_\Gc (\nu) = \frac{1-\nu}{\min (\nu, 1-\nu)}$ for $\nu \in (0,1)$ being parametrizations of a $\nu$-dependent balancing between $\Rc$ and $\Gc$.
 In \eqref{eq_problem_thesis} $\Rc$ can either be the indicator function of $\{0\}$, i.e., $\Rc(u) = 0$ if $u=0$ and $\infty $ else, resulting in a model that only uses $\Gc$ for regularization. Or, alternatively, $\Rc$ can be chosen to be any other suitable regularization functional, such as the total variation (TV) functional \cite{rudin1992tv_mh}, and the resulting infimal-convolution of the two functionals yields a decomposition $u = v + (u-v)$ of the solution $u$ of \eqref{eq_problem_thesis} as shown for example in \cref{fig_decomp_intro} for an inpainting experiment.
 
\paragraph{\textbf{Contributions}} By developing an energy-based regularization approach for general inverse problems in function space, our work provides the following contributions.
\begin{itemize}
\item \emph{Well-posedness:} Analytically, we prove properties of $\Gc$ such as coercivity and lower semicontinuity in function space, that yield well-posedness (including stability, but not uniqueness) for the application of our model to any linear inverse problem, without any assumptions, and to non-linear inverse problems under weak standard assumptions \cite{Hofmann07_nonlinear_tikhonov_banach_mh}. Specifically, we provide the following results:
\begin{itemize}
\item The infimum over $(\mu,\theta)$ in the definition of $\Gc$ as in \eqref{eq_texture_prior_intro} is attained, and $\Gc$ is coercive and weakly lower semi-continuous, see \cref{lem_Gv_attained,lem_G_coercive,lem_G_lsc}.
\item Problem \eqref{eq_problem_thesis} admits a solution, see \cref{thm_existence}.
\item Solutions of \eqref{eq_problem_thesis} are weakly, subsequentially stable up to shifts of $u$ in a subspace where both $A$ and $\Rc$ are invariant (\cref{thm_stability}), and for vanishing noise and appropriately chosen regularization parameters, weak, subsequential convergence of the solutions to solutions of an appropriate limit problem can be guaranteed (\cref{SM_thm_convergence}).
\end{itemize}
\item \emph{Regularity:} 
We provide a regularity result for the generative prior in function space. That is, we show that our convolutional network inspired architecture generates only continuous functions as outputs whenever at least two layers are used, i.e., $\Gc(u)<\infty$ only if $u$ is continuous, see \cref{prop_regularity_network}.
This is an important observation as it has several crucial implications: i) Our prior is well-suited for removing noise-like artifacts, as it is not capable of fitting highly discontinuous noise. ii) It is very reasonable to combine our generative prior, as well as other related generative neural-network-based priors, with a second term that allows for discontinuous structures. In our work, this is achieved via the infimal convolution with a TV-like functional $\Rc$.
iii) It is the penalization of latent variables via norms, as done in this work, that yields well-posedness and regularity of solutions. For other works that do not penalize latent variables, such as the deep image prior \cite{deep_image_prior}, similar results cannot be expected. This confirms the need for early stopping, as done in those works, in order to avoid the approximation of noise.

\item \emph{Generality:} Conceptually, the proposed approach is applicable to general inverse problems and we provide numerical results showing state-of-the-art performance for inpainting, denoising, deblurring under noise, super-resolution and JPEG decompression. 

\item \emph{Model-complexity:} While our approach still requires the solution of a non-convex optimization problem, for which global-optimality cannot be guaranteed, we still believe that our energy-based formulation and the fact that we do not require any heuristics is a big advantage in terms of simplicity and interpretability of our model compared to existing deep learning methods. In \cite{deep_image_prior} for instance, the authors propose to use skip connections in the network, a noise based regularization that is perturbing the network input with a random noise at each iteration of the fitting process, and also a method of reverting to previous iterates during the minimization. All of these features improve practical performance but are difficult to interpret or understand. We do not use any such heuristics in the implementation and all the theoretical results proven in the paper directly apply to the experiments.

In particular, we highlight that we are able to achieve results that are at least comparable to the ones of the deep image prior, while our model reduces the number of effective degrees of freedom, e.g., by a factor of $100$ (from $\approx 3  \times 10^6$  to $\approx 3 \times 10^4$) for inpainting with the \emph{Barbara} image as shown in \cref{fig_inpainting_intro} (or by a factor of 10, e.g., to $2 \times 10^5$, if one counts also auxiliary variables introduced in order to smooth hard-constraints in the numerical realization). In this context we note that also the work \cite{heckel2019deep} reduces model complexity for the deep image prior, an refer to  \Cref{sec:related_works} for a comparison to \cite{heckel2019deep}.

\item \emph{Foundations for further analysis:} By developing a generative prior in function space that delivers state-of-the-art numerical results, we enable a further analysis and mathematical development of such models with respect to, for instance, convex relaxations as done in \cite{Chambolle2020} for a single-layer version, or a further understanding of structural properties of solutions using results from \cite{Boyer19representer_mh,Carioni20_representer}.
\end{itemize}

\subsection{Related Works} \label{sec:related_works}

The deep image prior (DIP) \cite{deep_image_prior} has been a great incentive for research in the direction the present work is headed. While most works in that context are focused on experimental results, some also take a more mathematical point of view.
 
In \cite{Chambolle2020}, a single layer version of the proposed model, using only one convolutional layer, is introduced and analyzed as well as tested on images, showing promising results for images comprised of texture and piecewise smooth parts.
In \cite{heckel2019denoising}, the authors investigate how convolutional neural networks act as successful priors for image reconstruction solely via their architecture, as empirically shown by \cite{deep_image_prior}. The authors prove that fitting a CNN with fixed convolution kernels via early stopped gradient descent to a signal denoises it.
In \cite{heckel2019deep}, the authors propose the deep decoder, an underparametrized, untrained generative network for inverse problems in imaging. To be precise, the authors use a very simple, non-convolutional network (only consisting of pixel-wise linear combination of different channels, upsampling, non-linearities and normalization layers) as an image generator. The network architecture is chosen such that the number of parameters is less than the image dimensionality. It is shown theoretically and empirically, that the underparametrization acts as regularization by removing noise, without compromising the reconstruction error too much. These results are in line with our observations in the sense that we could also reduce the number of parameters and model complexity significantly compared to the original DIP.
In \cite{dittmer2019regularization}, a different point of view is introduced, showing that, under some conditions, a deep image prior can be interpreted as learning an optimal Tikhonov functional instead of training a neural network.
In \cite{jagatap2019algorithmic}, algorithmic guarantees for solving inverse problems using a deep image prior are provided.
 
In \cite{baguer2020computed,cascarano2020admm,obmann2020deep,
veen2018compressed,asim2020invertible,whang2020compressed}, neural networks are combined with additional regularization terms for solving inverse problems: In \cite{baguer2020computed}, the authors consider a problem of the form
\begin{equation}\label{eq_problem_baguer2020computed}
\begin{gathered}
\min_{\theta} \mathcal{D}_{y}(Ag_\theta(z)) + \lambda \mathcal{R}(g_\theta(z)),
\end{gathered}
\end{equation}
with a regularizing functional $\mathcal{R}$. Existence, stability, and convergence of solutions are obtained by imposing suitable assumptions. Moreover, also different learned approaches to obtain a better initialization for the proposed method are used. A main difference to our method is that we do not penalize $g_\theta(z)$, but $\theta$ and $z$ instead.
Also in \cite{cascarano2020admm} a problem of the form \eqref{eq_problem_baguer2020computed} with $\Dc_{y}(z)=\frac{1}{2}\|z-y\|^2$ and $\mathcal{R}$ a weighted total-variation is considered and solved using the Alternating Direction Method of Multipliers (ADMM) framework.
 
The problem formulation in \cite{obmann2020deep} is of the form
\begin{equation}\label{eq_problem_obmann2020deep}
\begin{gathered}
\min_{z} \|A\phi_\alpha (z)-y\|^2 + \alpha \norm{z}_{1,w},
\end{gathered}
\end{equation}
where $(\phi_\alpha)_{\alpha\geq 0}$ is a family of functions modeling (pre-trained) neural networks and $\norm{\; . \;}_{1,w}$ denotes a weighted 1-norm. The authors provide proofs of existence, stability, and convergence of solutions of the method in an abstract setting employing a list of assumptions on the involved functions and spaces.
 
In \cite{veen2018compressed}, a deep image prior is combined with total variation regularization and a regularization of the network parameters leading to
\begin{equation}\label{eq_problem_veen2018compressed}
\begin{gathered}
\min\limits_{\theta} \norm{Ag_\theta(z) - y}^2 + \lambda \TV (g_\theta(z)) + \mu \mathcal{R}(\theta),
\end{gathered}
\end{equation}
where $\mathcal{R}$ is a learned regularization functional. Contrary to the present work, here the $\TV$ is applied to the output of the neural network, which in our model shall in particular contain oscillatory features of the image. %
In \cite{asim2020invertible,whang2020compressed}, the authors use a pre-trained invertible generative network, $g_\theta:\mathbb{R}^n\rightarrow \mathbb{R}^n$ with $\theta$ learned, as a prior. In \cite{asim2020invertible}, the authors consider
\[\min_{z} \norm{Ag_\theta(z) - y}^2_2 + \lambda \| z \|^2_2\]
and in \cite{whang2020compressed} this ansatz is further generalized. Both works use a likelihood-based approach and discuss recovery guarantees.
 
A field that is also related to our work due to its generative modeling is sparse coding.
Sparse coding aims to find a representation of a vector $y\in\mathbb{R}^m$ in terms of a (typically pre-trained) dictionary $D\in\mathbb{R}^{m\times n}$ and a sparse coefficient vector $x\in\mathbb{R}^n$, such that $y=Dx$. In convolutional sparse coding, the dictionary matrix $D$ is further restricted to represent a convolution operator.
 
In \cite{sulam2018multilayer}, a multi-layer convolutional sparse coding method is discussed, i.e., instead of the single dictionary $D$, a composition of convolutional dictionaries $D_ND_{N-1}...D_1$ is used. 
The works \cite{zhang2016convolutional,papyan2017convolutional} present methods for cartoon texture decomposition via single-layer convolutional sparse coding combined with a TV penalty, resembling our combination of the generative prior $\Gc$ with a second regularization. In \cite{zhang2016convolutional}, the dictionary is pre-trained, but in \cite{papyan2017convolutional}, this results in a minimization problem similar to a single layer version of the proposed method.

All the before mentioned approaches using neural networks were generative in the sense that the network is used to generate the unknown. To the contrary, in \cite{lunz2019adversarial,li2019nett} the authors use pre-trained neural networks directly as regularization functionals. More precisely, a neural network $\psi_\theta: X \rightarrow \mathbb{R}$ with parameters $\theta$ is trained to map an image to a scalar penalty. Then, once the parameters are fixed, $\psi_\theta$ is applied for solving inverse problems via

\[ \min\limits_u \Dc_y(Au) + \lambda\psi_\theta(u). \]

In both works the authors also present theoretical analyses of the models relying on assumptions on the trained network. Main differences to our work are that we use the neural network as an image generator and that our network is untrained. Moreover, contrary to \cite{lunz2019adversarial,li2019nett}, we do not make any additional assumptions on our network for the analysis.
 
\subsection{Outline of the Paper}
 
In \Cref{chapter_theory}, we introduce and analyze the proposed method in an infinite dimensional setting proving, in particular, results on existence, stability, and regularity of solutions. In \Cref{experiments}, we provide numerical results obtained with our method and a comparison to the state of the art for different imaging applications.

\section{The Generative Model in Function Spaces}\label{chapter_theory}
In order to properly introduce the proposed model and prove theoretical results, we first need some preliminaries.
 
\subsection{Notation and Preliminaries}
In the following, $\Omega,\Sigma\subset \mathbb{R}^d$ are bounded domains, i.e., bounded, open, and simply connected sets in $\mathbb{R}^d$, with $0 \in \Sigma.$ Moreover, we denote $\Omega_\Sigma = \Omega-\Sigma = \left\{x- y\; \middle| \; x\in\Omega, \; y\in\Sigma \right\}$. For $q\in [1,\infty]$, we denote the Hölder conjugate exponent as $q'$, i.e., $q'\in [1,\infty]$ is, such that $\frac{1}{q}+\frac{1}{q'}=1$, using the convention that $\frac{1}{\infty}=0$. For $\omega\subset \mathbb{R}^d$, $f \in L^q(\omega)$ and $g \in L^{q'}(\omega)$, we use the notation $\langle f,g\rangle\coloneqq \int_\omega f(x)g(x)\;dx$ for the standard dual pairing and $\|f\|_q^q =\int_\omega |f(x)|^q\; dx$ for the $L^q$-norm. We define the zero extension of any function $f:\omega \rightarrow \R$ as
\begin{equation*}
\tilde{f}:\mathbb{R}^d \rightarrow \mathbb{R}, \quad\tilde{f}(x) =\begin{cases}
f(x)\quad &\text{if } x\in\omega,\\
0\quad &\text{else.}
\end{cases}
\end{equation*}
Further, we define $C_0(\omega)$ as the closure of $C_c(\omega)$ with respect to the uniform norm and the space $(\mathcal{M}(\omega),\|\; .\; \|_\mathcal{M})$ as the dual of $C_0(\omega)$. Note that $\Mc(\omega)$ coincides with the space of finite Radon measures and $\|\; .\; \|_\mathcal{M}$ with the total variation norm, whenever $\omega $ is a locally compact, separable metric space \cite[Theorem 1.54, Remark 1.57]{bvfunctions}, a property that holds for all domains considered in this work. For $\omega$ bounded, we will sometimes identify a function $f\in L^q(\omega)$ with an element of $\mathcal{M}(\omega)$ via $C_0(\omega)\ni \phi \mapsto \langle f,\phi\rangle$. For normed spaces $X$, $Y$, we denote the space of all linear, bounded operators from $X $ to $ Y$ as $\Lc(X,Y)$. Lastly, we denote the indicator function on a set $M$ as $\mathcal{I}_M$, i.e., $\mathcal{I}_M(x)=0$ if $x\in M$ and $\infty$ otherwise. We also need a notion of convolution.
 
\begin{defprop}\label{lem_conv_measures}\
Let $q\in (1,2]$, $g\in L^2(\Sigma)$ and $\mu\in\mathcal{M}(\Omega_\Sigma)$. Then there exists a unique function denoted by $\mu*g\in L^{q}(\Omega)$, such that for all $\phi\in C_c(\Omega)$,
\begin{equation}\label{eq_lem_conv_meas}
\begin{gathered}
\langle\mu*g, \phi\rangle = \int\limits_{\Omega_\Sigma} \int\limits_\Sigma \tilde{\phi}(x+y) g(x)  \; dx \; d\mu(y).
\end{gathered}
\end{equation}
In particular, the right-hand side is well-defined for all such $\phi$. Moreover, there exists $C>0$, such that $\norm{\mu*g}_q \leq C\norm{\mu}_\mathcal{M}\norm{g}_2$.
\end{defprop}
\begin{proof}
This is shown, for instance, in \cite[Section 3.1]{Chambolle2020}. 
\end{proof}
Note that, in case $g$ and $\mu$ are sufficiently regular, by Fubini's theorem, $\mu \conv g$ can be identified with the classical convolution given as
\[ 
(\mu*g)(x) = \int_{\Omega_\Sigma} g(x-y) d\mu(y).
\]
 
\subsection{Modeling and Analysis}\label{section_th_problem_formulation}
With the proposed method, we aim to solve inverse problems in imaging by (partially) generating the unknown image from an architecture inspired by generative convolutional neural networks. We also allow for an optional combination of this approach via infimal convolution with a second regularization term, that models parts of the unknown image that are not well-described by the generative approach. To be precise, let $q \in (1,2]$, $Y$ a Banach space, $y \in Y$ the given data and $A:L^q(\Omega) \rightarrow Y$ a forward operator, then we aim to find $u \in L^q(\Omega)$ with  $A(u)\approx y$ by solving
\begin{equation}\label{eq_cont_problem}
\tag{$P(y)$}
\min_{u,v\in L^q(\Omega)} \Ec_{y}(u,v),\text{ with } \Ec_{y}(u,v) \coloneqq \lambda\mathcal{D}_{y}(A(u)) + s_\Rc(\nu)\Rc(u-v)+s_\mathcal{G}(\nu)\mathcal{G}(v),
\end{equation}
where $\lambda\in (0,\infty)$, $\nu \in (0,1)$, $s_\Rc(\nu) = \frac{
\nu}{\min(\nu, 1-\nu)}$, $s_\mathcal{G}(\nu) = \frac{1-\nu}{\min(\nu, 1-\nu)}$ and
\begin{itemize}
\item $\mathcal{D}_{y}$ is a data fidelity term penalizing the discrepancy between $A(u)$ and the data $y$,
\item $\mathcal{G}$ is a \emph{generative prior} inspired by the architecture of generative convolutional neural networks, introduced in detail in \Cref{sec_texture_prior} below, and
\item $\Rc$ is any other regularization functional satisfying standard conditions as stated in \cref{ass_well_posedness} below.
We think of $\Rc$ as a functional that enforces piecewise smoothness such as $\Rc= \TV$, but it's choice is rather flexible and also $\Rc = \Ic_{\{0\}}$ is feasible, reducing our model to using only $\Gc$ for regularization.
\end{itemize}
In \eqref{eq_cont_problem}, $u$ is the image, $v$ as the \emph{generative part} of $u$, which shall in particular contain oscillatory features of the image, and $u-v$ contains all information not captured by the generative part $u$. The parameters $\nu$ and $\lambda$  balance the different regularization terms and the data fidelity, respectively, where the parametrizations $s_\Rc(\nu)$ and $s_\mathcal{G}(\nu)$ are introduced to achieve a $\nu$-dependent balancing between $\Rc$ and $\Gc$, that does not interfere with the balancing between data fidelity and regularization.

\subsubsection{The Generative Prior $\mathcal{G}$}\label{sec_texture_prior}

We model the generative part $v\in L^q(\Omega)$ in \eqref{eq_cont_problem} to be the output of a neural-network-inspired generative architecture as follows: Let $L$ be the number of hidden layers, $N_l$ be the number of latent variables in layer $l=1,\ldots,L$, and set $N_0 = 1$.
Then we define, for $l=1,\ldots,L$, via 
\[ E^l \subset \big \{ (n,k)  \st n \in \{1,\ldots,N_l\}, k \in \{1,\ldots,N_{l-1}\} \big \}
\]
a set of all connections between latent variables of layer $l$ and of layer $l-1$, see \cref{fig_sketch_network} for a visualization of a corresponding generative network. In order to avoid latent variables that do not influence the output, we assume that each latent variable of layer $l$ is connected to at least one latent variable of layer $l-1$, i.e., for each $n \in \{1,\ldots,N_l\}$ there exists $k \subset \{1,\ldots,N_{l-1}\}$ such that $(n,k) \subset E^l$. 
 
Given the domains $\Sigma, \Omega \subset \R^d$, we recursively define domains $\Omega_l$ via $\Omega^1=\Omega-\Sigma$ and $\Omega^{l+1}=\Omega^{l}-\Sigma$ for $l=1,\ldots,L-1$. Further, we denote
\begin{itemize}
\item $M^l \coloneqq \mathcal{M}(\Omega^l)^{N_l}$,
\item $M \coloneqq M^1\times M^2\times ... \times M^L$
\end{itemize}
to be the spaces of latent variables $\mu = (\mu^1,\ldots,\mu^L)\in M$ with $\mu^l = (\mu^l_1,\ldots,\mu^l_{N_l})\in M^l$, and
\begin{itemize}
\item $\Theta^l \coloneqq L^2(\Sigma)^{|E^l|}$,
\item $\Theta \coloneqq \Theta^1 \times \Theta^2 \times... \times \Theta^L$
\end{itemize}
to be the spaces of filter kernels $\theta = (\theta^1,\ldots,\theta^L)\in\Theta$ with $\theta^l = (\theta_{n,k}^l)_{(n,k)\in E^l} \in \Theta^l$.
 
Our generative prior will constrain the input $v$ to be written as a sum of multi-layer convolutions of the latent variable $\mu^L \in M^L$. To this aim, we define vectorial convolution operators $\conv_l$ via
\noindent\begin{minipage}{.5\linewidth}
\begin{align*}
\conv_1 : M^1 \times \Theta^1 & \rightarrow L^q({\Omega}) \\
(\mu,\theta) & \mapsto \sum\limits_{n=1}^{N_1} \mu_{n}*\theta_{n,1},
\end{align*}
\end{minipage}%
\begin{minipage}{.5\linewidth}
\begin{align*}
\conv_l : M^l \times \Theta^l & \rightarrow M^{l-1} \\
(\mu,\theta) & \mapsto \left(  \sum_{(n,k) \in E^{l}} \mu_n \conv \theta_{n,k} \right) _{k=1}^{N_{l-1}},
\end{align*}
\end{minipage}\\\\
for $l=2,\ldots,L$, where we note that, by \cref{lem_conv_measures}, the convolutions $\mu^l \conv_l \theta^l$ are actually contained in more regular $L^q$-spaces.
The functional $\mathcal{G}: L^q(\Omega)\rightarrow[0,\infty]$ is then defined as
\begin{equation}\label{eq_texture_prior}
\tag{GEN}
  \begin{gathered}
  \mathcal{G}(v) = \inf\limits_{\substack{\mu \in M, \\ \theta \in \Theta}}  \norm{\mu}_\mathcal{M} +\sum\limits_{l=2}^L  \Jc_{l-1}( \mu^{l-1} -\mu^l \conv_l \theta^l ) ,
  \end{gathered}\quad \text{s.t.} \quad 
  \begin{cases}
  v = \mu^1*_1\theta^1,\\
  \norm{\theta^l_{n,k}}_2 \leq 1 \quad \text{for all $n,k,l$,}\\
  Z \theta = 0,\\
  \end{cases}  
\end{equation}
where $\|\mu\|_\Mc = \sum_{l=1}^L \sum_{n=1}^{N_l} \|\mu^l_n\|_\Mc$ denotes a component-wise Radon norm, $Z \in \Lc(\Theta ,\R^{N_c})$ with $N_c \in \N$, and $\Gc(v) = \infty$ in case the constraints in \eqref{eq_texture_prior} cannot be met by any $\mu,\theta$.
Note that, while our interest is to enforce the constraint $\mu^{l-1} = \mu^l \conv_l \theta^l$ exactly, we allow also a relaxed version of this constraint via incorporating functionals
$\Jc_l: M^{l} \rightarrow [0,\infty]$, that are weak* lower semicontinuous and such that $\Jc_l(0) = 0$. This is a generalization, as the choice $\Jc_l = \Ic_{\{0\}}$ is feasible and allows to enforce hard constraints.
 
We can make the following observations regarding the generative prior $\Gc$: The latent variables $\mu^l$ are penalized with the $L^1$-type norm $\|\cdot \|_\Mc$. In our approach, this is the replacement for non-linearities in the network and aims to enforce sparsity of the $\mu^l$. A relation to non-linearities in neural networks is given indirectly via the soft-thresholding operator resulting from $\|\cdot \|_\Mc$, which acts on the $\mu^l$ within our numerical solution algorithm.
In order to ensure well-posedness, and in particular that \eqref{eq_texture_prior} admits a minimum, we also need to penalize the filter kernels. Indeed, note that due to the bilinearity of the convolution, it is possible to shift a non-zero scalar $\alpha$ between $\mu$ and $\theta$ without changing the outcome, that is $\mu^l*_l \theta^l = (\frac{1}{\alpha}\mu^l)*_l(\alpha \theta^l)$. This would, however, decrease the objective functional value for $\alpha>1$. Therefore, without a penalty on $\theta$, one could always let $\alpha \rightarrow \infty$ above and hence \eqref{eq_texture_prior} would not admit a minimum. We decided to use a norm constraint on the filter kernels instead of an additive term since again by the bilinearity of the convolution this is not limiting the model, but avoids ambiguities.
We allow for additional, linear constraints $Z\theta = 0$ on the filter kernels. In our applications later on, where $\Gc$ is combined with a total-variation-type functional, this will be used to enforce zero mean of the filter kernels of the last layer, i.e., $\int_\Sigma \theta^1 _{n,1} = 0$ for all $n=1,\ldots,N_1$. In the context of total-variation-type functionals this is natural. The total variation does not penalize constant translations, therefore we want them to be contained in $u-v$ and thus $v$ shall have zero mean. In the general model, however,
any type of (weakly continuous) constraints, in particular also no constraints ($Z=0$), are possible.
 
In order to gain more understanding of the regularizing properties of the proposed network architecture, we will in the following discuss regularity of the output $v$. To simplify notation, we denote for $\theta \in \Theta$ and $\mu \in M$,
\begin{equation}
  G(\mu,\theta) = \norm{\mu}_\mathcal{M} +\sum\limits_{l=2}^L  \Jc_{l-1}( \mu^{l-1} -\mu^l \conv_l \theta^l ).
\end{equation}
and, for $v\in L^q(\Omega)$, the feasible set as
\[F(v) \coloneqq \left\{ (\mu,\theta)\in M\times \Theta\; \middle| \; (\mu,\theta) \text{ satisfies the constraints in \eqref{eq_texture_prior}}\right\},\]
such that $\mathcal{G}(v)=\inf\limits_{(\mu,\theta)\in F(v)}G(\mu,\theta)$. 
 
\begin{proposition}\label{prop_regularity_network}
Let $\Gc:L^q(\Omega) \rightarrow [0,\infty]$ be given as in \eqref{eq_texture_prior} with $L\geq 2$ and assume additionally that $\Jc_l$ is such that $\Jc_l(\mu)<\infty$ implies $\mu\in L^2(\Omega^l)^{N_l}$. Then, if $\Gc(v)<\infty$, it holds true that $v\in C(\overline{\Omega})$.
\end{proposition}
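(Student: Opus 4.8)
The plan is to exploit the fact that, since $L \ge 2$, the summand $\Jc_1(\mu^1 - \mu^2 \conv_2 \theta^2)$ is present in $G(\mu,\theta)$, and that this summand — together with the extra hypothesis on $\Jc_1$ — forces the bottom-layer latent variable $\mu^1$ to lie in $L^2$. Once $\mu^1 \in L^2$, the output $v = \sum_n \mu^1_n \conv \theta^1_{n,1}$ is a finite sum of convolutions of $L^2$-functions, and such convolutions are continuous.

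In detail, I would first use $\Gc(v)<\infty$ to fix a pair $(\mu,\theta) \in F(v)$ with $G(\mu,\theta)<\infty$. Then $v = \mu^1 \conv_1 \theta^1 = \sum_{n=1}^{N_1}\mu^1_n \conv \theta^1_{n,1}$ with each $\theta^1_{n,1}\in L^2(\Sigma)$, and, because $L\ge 2$, $\Jc_1(\mu^1 - \mu^2 \conv_2 \theta^2)<\infty$; by the additional hypothesis this gives $\mu^1 - \mu^2 \conv_2 \theta^2 \in L^2(\Omega^1)^{N_1}$. Next I would observe that the components of $\mu^2 \conv_2 \theta^2$ are finite sums of terms $\mu^2_n \conv \theta^2_{n,k}$ with $\mu^2_n \in \Mc(\Omega^2)$, $\theta^2_{n,k}\in L^2(\Sigma)$ and $\Omega^2 = \Omega^1 - \Sigma$; applying \cref{lem_conv_measures} with exponent $q=2$ (taking $\Omega^1$ in the role of $\Omega$) yields $\mu^2_n \conv \theta^2_{n,k} \in L^2(\Omega^1)$, hence $\mu^2 \conv_2 \theta^2 \in L^2(\Omega^1)^{N_1}$. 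Adding the two contributions gives $\mu^1 \in L^2(\Omega^1)^{N_1}$.

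The final step is to show that each summand $\mu^1_n \conv \theta^1_{n,1}$ — now a convolution of an $L^2(\Omega^1)$-function with an $L^2(\Sigma)$-function — has a continuous representative on $\overline{\Omega}$. Identifying $\mu^1_n$ and $\theta^1_{n,1}$ with their zero extensions $\widetilde{\mu^1_n},\widetilde{\theta^1_{n,1}} \in L^2(\R^d)$, a computation starting from \eqref{eq_lem_conv_meas} together with Fubini's theorem (legitimate since both factors lie in $L^2$ with compact support) shows that $\mu^1_n \conv \theta^1_{n,1}$ agrees on $\Omega$ with the classical convolution $w_n \coloneqq \widetilde{\mu^1_n} \conv \widetilde{\theta^1_{n,1}}$. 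This $w_n$ is bounded, with $\|w_n\|_\infty \le \|\widetilde{\mu^1_n}\|_2\,\|\widetilde{\theta^1_{n,1}}\|_2$ by Cauchy--Schwarz, and uniformly continuous on $\R^d$, since
\[
|w_n(x+\tau) - w_n(x)| \;\le\; \| \widetilde{\theta^1_{n,1}}(\,\cdot\, + \tau) - \widetilde{\theta^1_{n,1}} \|_{2}\, \| \widetilde{\mu^1_n} \|_{2} \;\longrightarrow\; 0 \quad \text{as } \tau \to 0,
\]
by continuity of translations in $L^2(\R^d)$, uniformly in $x$. Consequently $v = \sum_{n=1}^{N_1} w_n$ a.e.\ on $\Omega$, and since $\overline{\Omega}$ is compact this exhibits $v$ as (the restriction of) a continuous function, i.e.\ $v\in C(\overline{\Omega})$.

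I expect the main obstacle to be precisely this last step: passing rigorously from the abstract $L^q$-valued object of \cref{lem_conv_measures} to the pointwise classical convolution $w_n$ of the zero extensions, and justifying the Fubini exchange. Everything else is bookkeeping with the constraints in \eqref{eq_texture_prior} plus one application of \cref{lem_conv_measures} at the higher integrability exponent $q=2$. I also note that the hypothesis $L \ge 2$ is genuinely needed: for $L=1$ one only knows $v = \mu^1 \conv_1 \theta^1$ with $\mu^1$ merely a Radon measure, so \cref{lem_conv_measures} yields only $v \in L^q(\Omega)$ and no continuity can be expected.
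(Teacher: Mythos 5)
Your proposal is correct and follows essentially the same route as the paper: extract a feasible pair $(\mu,\theta)$ with $G(\mu,\theta)<\infty$, use \cref{lem_conv_measures} with exponent $2$ together with the hypothesis on $\Jc_l$ to conclude $\mu^1\in L^2(\Omega^1)^{N_1}$, and then note that $v$ is a sum of convolutions of $L^2$ functions and hence continuous. The only (harmless) differences are that you need just the single step at the layer-$1$/layer-$2$ interface where the paper iterates from layer $L$ downward, and that you prove the final continuity claim (boundedness via Cauchy--Schwarz and continuity of translations in $L^2$) directly where the paper cites a reference.
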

\begin{proof}
Let $\Gc(v)<\infty$. This implies the existence of $(\mu,\theta)\in F(v)$ such that $G(\mu,\theta)<\infty$. From \cref{lem_conv_measures}, it follows that $\mu^{L}*_L\theta^L\in L^2(\Omega^{L-1})^{N_{L-1}}$. Together with the assumption on $\Jc_l$, this implies $\mu^{L-1}\in L^2(\Omega^{L-1})^{N_{L-1}}$. Repeating this argument, in case $L>2$, yields $\mu^1\in L^2(\Omega^1)^{N_1}$. Finally, $v=\mu^1*_1\theta^1\in C(\overline{\Omega})$ as it is a sum of convolutions of two $L^2$ functions, see, for instance, \cite[Theorem 3.14]{bredies2018mathematical}.
\end{proof}
Note that this result requires a network depth of at least $L=2$, the single-layer variant with $L=1$ can also produce discontinuous outputs in general. Also note that the assumption on $\Jc_l$ in \cref{prop_regularity_network} is in particular true for $\Jc_l = \Ic_{\{0\}}$ as well as for the relaxed version of $\Ic_{\{ 0\}}$, which is introduced in \cref{ex_data_reg} and used in the applications in \Cref{experiments}.
 
\begin{remark}[Insights from function space] In our opinion, an important reason for the success of using deeper generative networks in imaging (as opposed to single-layer models) %
lies in the fact that the convolution increases regularity, as proved in \cref{prop_regularity_network}, and as can also be practically observed in \cref{fig_smoothing_convoution}, where randomly initialized latent variables and filter kernels are convolved.
\cref{prop_regularity_network} in particular shows that the output of our convolutional network is continuous whenever the network has at least two layers.
This has several important implications: First, it explains the efficacy of the proposed generative prior $\Gc$ for removing noise-like artifacts, which are typically highly discontinuous. Second, \cref{prop_regularity_network} also suggests to combine $\Gc$ with a second regularizing functional $\Rc$, that allows for jump discontinuities (e.g. $\Rc = TV$), since otherwise our model would not be able to reconstruct these.
 
If, on the contrary, also the filter kernels $\theta$ are not more regular than belonging to $\Mc(\Sigma)$, it is easy to see that the convolution does, in general, not increase regularity as we can only expect $\mu^{L-1}\in \Mc(\Omega^{L-1})^{N_{L-1}}$. This has an interesting consequence for generative-neural-network based models: In case no regularization of the latent variables $\mu$ and the kernels $\theta$ is used, as done in the original deep image prior \cite{deep_image_prior} and most of the subsequent works, the learned latent variables and kernels can be arbitrarily irregular and the resulting network might generate noise. This explains why, indeed, it makes sense to use early stopping as proposed in \cite{deep_image_prior}. 
 
\begin{figure}[h]
\centering
\includegraphics[scale=0.5]{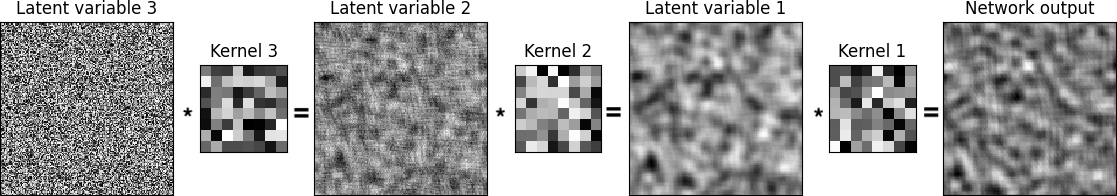}
\caption{Smoothing effect of the convolution with increasing network depth: After randomly initializing all kernels and the latent variables in the deepest layer (latent variable 3), this figure shows from, left to right, the resulting variables of intermediate layers and the network output. As can be observed visually, the smoothness of the variables increases with an increased number of subsequent convolutions.}
\label{fig_smoothing_convoution}
\end{figure}
\end{remark}
 
We now begin our analysis of the functional $\mathcal{G}$. Our main goal is to prove that $\mathcal{G}$ is coercive and lower semicontinuous, which makes it an appropriate regularizing functional. We start with a general continuity result for the convolution.

\begin{lemma}[Sequential weak*-continuity of the convolution]
\label{lem_conv_weak*_cont}
Take $q \in (1,2]$ arbitrary and let $(g_m)_m$, $ g$ in $ L^2(\Sigma)$ and $(\mu_m)_m $, $\mu $ in $\mathcal{M}(\Omega_\Sigma)$ be such that $g_m\rightharpoonup g$  and $\mu_m \xrightharpoonup{*} \mu$ as $m \rightarrow\infty $. Then, it follows that
\[
\mu_m*g_m\rightharpoonup\mu*g \text{ in }L^q(\Omega) \text{ as }m \rightarrow \infty .\]
\end{lemma}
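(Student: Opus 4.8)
The plan is to test the weak convergence of $\mu_m * g_m$ against an arbitrary $\phi \in C_c(\Omega)$ (which is dense enough in $L^{q'}(\Omega)$ for the argument, combined with the uniform bound from \cref{lem_conv_measures}), using the defining identity \eqref{eq_lem_conv_meas}. Concretely, I would write
\[
\langle \mu_m * g_m, \phi\rangle - \langle \mu * g, \phi\rangle
= \int_{\Omega_\Sigma}\!\int_\Sigma \tilde\phi(x+y) g_m(x)\,dx\,d\mu_m(y)
- \int_{\Omega_\Sigma}\!\int_\Sigma \tilde\phi(x+y) g(x)\,dx\,d\mu(y),
\]
and split this difference into two terms by inserting the mixed quantity $\int_{\Omega_\Sigma}\int_\Sigma \tilde\phi(x+y) g(x)\,dx\,d\mu_m(y)$. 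The first term then has the form $\int_{\Omega_\Sigma} h_m(y)\,d\mu_m(y)$ with $h_m(y) = \int_\Sigma \tilde\phi(x+y)(g_m(x)-g(x))\,dx$, and the second term is $\int_{\Omega_\Sigma} h(y)\,d(\mu_m-\mu)(y)$ with $h(y) = \int_\Sigma \tilde\phi(x+y)g(x)\,dx$.

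For the second term I would check that $h \in C_0(\Omega_\Sigma)$: it is the (cross-)correlation of the compactly supported continuous function $\tilde\phi$ with the $L^2$ (hence $L^1$ on the bounded set $\Sigma$) function $g$, so it is continuous, and it vanishes outside a bounded set since $\supp\tilde\phi$ and $\Sigma$ are bounded; after zero-extension it lies in $C_0(\Omega_\Sigma)$. Hence $\mu_m \xrightharpoonup{*} \mu$ gives $\int h\,d(\mu_m-\mu) \to 0$ directly from the definition of weak* convergence in $\Mc(\Omega_\Sigma) = C_0(\Omega_\Sigma)^*$. For the first term I would bound $|h_m(y)| \le \|\tilde\phi(\cdot+y)\|_2\,\|g_m-g\|_2 \le \|\phi\|_\infty |\Sigma|^{1/2}\,\|g_m-g\|_2$; but weak convergence of $g_m$ does not give $\|g_m-g\|_2\to 0$, so instead I argue that for each fixed $y$, $h_m(y) = \langle \tilde\phi(\cdot+y), g_m - g\rangle \to 0$ by weak convergence in $L^2(\Sigma)$, while $\|h_m\|_\infty$ is uniformly bounded (by $\|\phi\|_\infty|\Sigma|^{1/2}\sup_m\|g_m-g\|_2$, finite since weakly convergent sequences are bounded). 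Moreover the $h_m$ are equicontinuous: $|h_m(y)-h_m(y')| \le \|g_m-g\|_2\,\|\tilde\phi(\cdot+y)-\tilde\phi(\cdot+y')\|_2$, and $\tilde\phi$ is uniformly continuous with compact support, so this tends to $0$ uniformly in $m$ as $y'\to y$. By Arzelà–Ascoli, $h_m$ has a uniformly convergent subsequence, whose limit must be $0$ by pointwise convergence; since this applies to every subsequence, $h_m \to 0$ uniformly on $\Omega_\Sigma$. Then $|\int h_m\,d\mu_m| \le \|h_m\|_\infty \sup_m\|\mu_m\|_\Mc \to 0$, again using boundedness of the weak* convergent sequence $(\mu_m)_m$.

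The main obstacle is the first term: the "obvious" estimate wants norm convergence of $g_m$, which is false under mere weak convergence, so one has to upgrade pointwise-in-$y$ convergence of $h_m$ to uniform convergence. Establishing equicontinuity of the family $(h_m)_m$ (uniform in $m$) via uniform continuity of the fixed test function $\tilde\phi$ and uniform boundedness of $\|g_m-g\|_2$, and then invoking Arzelà–Ascoli, is the crux. Once uniform convergence $h_m\to 0$ is in hand, pairing against the norm-bounded measures $\mu_m$ is routine. Finally, to pass from test functions in $C_c(\Omega)$ to all of $L^{q'}(\Omega)$, I invoke the uniform bound $\|\mu_m * g_m\|_q \le C\sup_m\|\mu_m\|_\Mc\,\sup_m\|g_m\|_2 < \infty$ from \cref{lem_conv_measures} together with density of $C_c(\Omega)$ in $L^{q'}(\Omega)$, which is the standard $\varepsilon/3$ argument for weak convergence of a bounded sequence.
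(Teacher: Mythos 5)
Your proposal is correct and follows essentially the same route as the paper's proof: the key step is an Arzel\`a--Ascoli argument for a fixed test function $\phi\in C_c(\Omega)$ (you apply it to the functions $h_m(y)=\int_\Sigma\tilde\phi(x+y)(g_m(x)-g(x))\,dx$, upgrading pointwise to uniform convergence via equicontinuity and a subsequence-of-subsequences argument), followed by pairing the remaining term with $\mu_m-\mu$ via weak* convergence and a final density/uniform-boundedness step to pass from $C_c(\Omega)$ to all of $L^{q'}(\Omega)$. This matches the three-step outline given in the paper (Arzel\`a--Ascoli for fixed $\phi$, extension to the whole sequence, density), so no further comparison is needed.
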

\begin{proof}  The proof can be achieved in three steps: First, use the Arzelà–Ascoli theorem to show that, for $\phi \in C_c(\Omega)$ fixed, a subsequence of $(\langle\mu_m*g_m,\phi\rangle )_m$ converges to $\langle\mu*g,\phi\rangle$. Second, extend this to the entire sequence and third, via density, conclude weak* convergence as claimed. For the readers convenience, this proof is elaborated in detail in \Cref{supplement_proofs} of the supplementary material for this paper.
\end{proof}

Based on this, we can now establish lower semicontinuity of $\Gc$ in an appropriate topology. In this context, all notions of convergence in the product spaces $M$ and $\Theta$, such as weak*  convergence in $M$ or weak convergence in $\Theta$, refer to component-wise convergence in the underlying $\Mc$ and $L^2$, respectively.
 
\begin{lemma}\label{lem_Gv_lsc}
Let $\Gc:L^q(\Omega) \rightarrow [0,\infty]$ be given as in \eqref{eq_texture_prior}, where $q \in (1,2]$, $Z\in \Lc(\Theta ,\R^{N_c})$ and the $\Jc_l:M^l \rightarrow [0,\infty]$ are weak* lower semicontinuous with $\Jc_l(0) = 0$.
 
Then, for sequences $(v_m)_m $ in $L^q(\Omega)$, $(\mu_m)_m$ in $ M$ and $(\Theta_m)_m$ in $\Theta$ such that
\[ v_m\rightharpoonup v \text{ in } L^q(\Omega), \quad 
\mu_m \xrightharpoonup{*} \mu \text{ in } M, \text{ and}\quad 
\theta_m \rightharpoonup \theta \text{ in }\Theta,
\]
it holds that
\begin{itemize}
\item[i)] $G(\mu,\theta) \leq \liminf\limits_{m\rightarrow\infty} G(\mu_m,\theta_m)$ and
\item[ii)] if $(\mu_m,\theta_m)\in F(v_m)$ for all $m$, then also $(\mu,\theta)\in F(v)$.
\end{itemize}
\begin{proof}\
We start by proving i). By \cref{lem_conv_weak*_cont}, for all $l=2,\ldots,L$,
\[ (\mu_m)^l\conv_l(\theta_m)^l \rightharpoonup \mu^l\conv_l\theta^l \text{ in } L^2(\Omega^{l-1})^{N_{l-1}}\text{ as }m \rightarrow \infty.\]
This implies also convergence in the weak* sense in $M^{l-1}$. Further, $\Jc_l$ is weak* lower semicontinuous in $M^l$ by assumption and $\|\;.\; \|_\mathcal{M}$ is weak* lower semicontinuous as being a dual norm. Hence, $G$ is lower semicontinuous as claimed.
 
To prove ii), we have to show that $(\mu,\theta)$ satisfies the three constraints in \eqref{eq_texture_prior}. The first constraint $v = \mu^1 \conv_1 \theta^1$ holds true by uniqueness of weak limits and by weak convergence of $(\mu_m)^1 \conv_1 (\theta_m)^1$ to $\mu^1 \conv_1 \theta^1$ as shown in \cref{lem_conv_weak*_cont}.
The constraints $\|\Theta_{n,k}^l \|_2 \leq 1$ and $Z \theta = 0$ follow from weak convergence of $(\theta_m)_m$ using weak lower semicontinuity of $\|\cdot \|_2$ and weak-to-weak continuity of $Z$, respectively.
\end{proof}
\end{lemma}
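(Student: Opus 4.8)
The plan is to prove i) and ii) separately, reducing both parts to the sequential weak*-continuity of the convolution from \cref{lem_conv_weak*_cont}, together with standard lower-semicontinuity properties of norms and the assumed weak* lower semicontinuity of the penalties $\Jc_l$.

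For i), I would first argue that the arguments of the penalties $\Jc_{l-1}$ converge in the right topology. Applying \cref{lem_conv_weak*_cont} componentwise (and summing the finitely many terms in the definition of $\conv_l$), from $\mu_m^l \xrightharpoonup{*} \mu^l$ in $M^l$ and $\theta_m^l \rightharpoonup \theta^l$ in $\Theta^l$ one obtains $\mu_m^l \conv_l \theta_m^l \rightharpoonup \mu^l \conv_l \theta^l$ in $L^q(\Omega^{l-1})^{N_{l-1}}$ for each $l=2,\ldots,L$. Since the domains $\Omega^{l-1}$ are bounded, $C_0(\Omega^{l-1}) \subset L^{q'}(\Omega^{l-1})$, so this weak $L^q$-convergence in particular yields weak*-convergence in $M^{l-1} = \Mc(\Omega^{l-1})^{N_{l-1}}$; combined with the assumed $\mu_m^{l-1} \xrightharpoonup{*} \mu^{l-1}$ this gives $\mu_m^{l-1} - \mu_m^l \conv_l \theta_m^l \xrightharpoonup{*} \mu^{l-1} - \mu^l \conv_l \theta^l$ in $M^{l-1}$. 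I would then invoke weak* lower semicontinuity of each $\Jc_{l-1}$ (hypothesis) and of $\|\cdot\|_\Mc$ (being a dual norm) on the individual summands, and recombine the bounds using superadditivity of $\liminf$ to conclude $G(\mu,\theta) \leq \liminf_m G(\mu_m,\theta_m)$.

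For ii), assuming $(\mu_m,\theta_m) \in F(v_m)$ for all $m$, I would verify the three constraints in \eqref{eq_texture_prior} in turn. The representation $v = \mu^1 \conv_1 \theta^1$ follows since $v_m = \mu_m^1 \conv_1 \theta_m^1 \rightharpoonup \mu^1 \conv_1 \theta^1$ in $L^q(\Omega)$ by \cref{lem_conv_weak*_cont}, while $v_m \rightharpoonup v$ by assumption, and weak limits are unique. The bounds $\|\theta^l_{n,k}\|_2 \leq 1$ follow from componentwise weak convergence $\theta_m \rightharpoonup \theta$ in $\Theta$ and weak lower semicontinuity of $\|\cdot\|_2$. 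Finally $Z\theta = 0$ holds because a bounded linear operator is weak-to-weak continuous, so $0 = Z\theta_m \rightharpoonup Z\theta$ in $\R^{N_c}$.

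I expect no serious obstacle: \cref{lem_conv_weak*_cont} already carries essentially all of the analytic weight, and this lemma is its repackaging into a statement about $G$ and the feasible set $F$. The only point requiring a little care is the bookkeeping of topologies — in particular, the weak $L^q(\Omega^{l-1})$-convergence produced by \cref{lem_conv_weak*_cont} must be correctly reinterpreted as weak*-convergence in $\Mc(\Omega^{l-1})$ so that it matches the space on which $\Jc_{l-1}$ is assumed weak* lower semicontinuous, which is exactly where boundedness of the domains $\Omega^{l-1}$ is used.
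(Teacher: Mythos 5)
Your proposal is correct and follows essentially the same route as the paper's proof: apply \cref{lem_conv_weak*_cont} to get weak convergence of $\mu_m^l \conv_l \theta_m^l$, reinterpret this as weak* convergence in $M^{l-1}$, and invoke weak* lower semicontinuity of $\Jc_{l-1}$ and $\|\cdot\|_\Mc$ for i), then check the three constraints via uniqueness of weak limits, weak lower semicontinuity of $\|\cdot\|_2$, and weak-to-weak continuity of $Z$ for ii). You even spell out two small steps the paper leaves implicit (the embedding $C_0(\Omega^{l-1})\subset L^{q'}(\Omega^{l-1})$ justifying the passage from weak $L^q$- to weak*-convergence in $\Mc$, and the superadditivity of $\liminf$).
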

 
\begin{lemma}\label{lem_Gv_attained} With the assumptions of \cref{lem_Gv_lsc}, for any $v\in L^q(\Omega)$ with $\mathcal{G}(v)<\infty$, the infimum in $\mathcal{G}(v)$ is attained, i.e., there exists $(\mu,\theta)\in F(v)$, such that $\mathcal{G}(v)=G(\mu,\theta)$.
\begin{proof}
Since $\mathcal{G}(v)<\infty$, we pick a minimizing sequence $(\mu_m$, $\theta_m)_m$ in $F(v)$ such that the sequence $(G(\mu_m,\theta_m))_m$ is bounded. From this, the definitions of $G$ and $F(v)$ imply that the sequences $(\mu_m)_m\subset M$ and $(\theta_m)_m\subset\Theta$ are bounded.
By the theorem of Banach-Alaoglu applied to the space of Radon measures and reflexivity of $L^2$, we can hence find (non-relabeled) subsequences such that $\mu_{m}\xrightharpoonup{*} \mu$ in $M$ and $\theta_{m}\rightharpoonup \theta$ in $\Theta$ as $m\rightarrow\infty$. \cref{lem_Gv_lsc} yields, that also the limit $(\mu,\theta)$ is feasible, i.e., $(\mu,\theta)\in F(v)$, and that
\[ G(\mu,\theta) \leq \liminf\limits_{m\rightarrow\infty} G(\mu_m,\theta_m) = \mathcal{G}(v),\]
which concludes the proof.
\end{proof}
\end{lemma}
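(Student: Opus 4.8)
The plan is to apply the direct method of the calculus of variations, with \cref{lem_Gv_lsc} doing the real work. Since $\Gc(v)<\infty$, I would start from a minimizing sequence $(\mu_m,\theta_m)_m\subset F(v)$ with $G(\mu_m,\theta_m)\to\Gc(v)$ and, discarding finitely many terms, assume $G(\mu_m,\theta_m)\le\Gc(v)+1$ for all $m$.

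First I would extract uniform bounds. Because $(\mu_m,\theta_m)\in F(v)$, the kernel constraint $\|(\theta_m)^l_{n,k}\|_2\le 1$ holds for all $n,k,l$, so $(\theta_m)_m$ is bounded in $\Theta$ with no further argument. For the measures, $G(\mu,\theta)\ge\|\mu\|_\Mc$ since the terms $\Jc_{l-1}(\cdot)$ are nonnegative, whence $\|\mu_m\|_\Mc\le\Gc(v)+1$ and, componentwise, each $\|(\mu_m)^l_n\|_\Mc$ is bounded.

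Next I would pass to convergent subsequences. Each $\Omega^l$ is a bounded domain in $\R^d$, hence $\sigma$-compact, so $C_0(\Omega^l)$ is separable and bounded subsets of its dual $\Mc(\Omega^l)$ are sequentially weak*-compact; applied componentwise this gives a subsequence with $\mu_m\xrightharpoonup{*}\mu$ in $M$. Since $\Theta$ is a Hilbert space, a further non-relabeled subsequence satisfies $\theta_m\rightharpoonup\theta$ in $\Theta$. Then I would invoke \cref{lem_Gv_lsc} with the constant sequence $v_m\equiv v$ (which converges to $v$ in $L^q(\Omega)$): part ii) yields $(\mu,\theta)\in F(v)$, hence $\Gc(v)\le G(\mu,\theta)$, and part i) yields $G(\mu,\theta)\le\liminf_m G(\mu_m,\theta_m)=\Gc(v)$. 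Combining the two inequalities gives $\Gc(v)=G(\mu,\theta)$, so the infimum is attained.

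I do not expect a genuine obstacle here: the two substantive facts — weak*/weak lower semicontinuity of $G$ and stability of the feasible set $F(\cdot)$ under these limits — are precisely the content of \cref{lem_Gv_lsc}, which may be assumed. The only point deserving a moment's care is the sequential weak*-compactness used for $M$, which rests on $C_0(\Omega^l)$ being separable; this is guaranteed by the $\Omega^l$ being bounded open subsets of $\R^d$. Everything else is bookkeeping about which norm bound controls which sequence.
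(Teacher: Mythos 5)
Your proposal is correct and follows essentially the same route as the paper: a minimizing sequence, boundedness of $(\mu_m)_m$ and $(\theta_m)_m$ from the definition of $G$ and the kernel constraint, weak*/weak compactness to extract limits, and then \cref{lem_Gv_lsc} for both feasibility of the limit and lower semicontinuity. The extra care you take with sequential weak*-compactness (separability of $C_0(\Omega^l)$) and with applying \cref{lem_Gv_lsc} to the constant sequence $v_m\equiv v$ only makes explicit what the paper leaves implicit.
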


\begin{lemma}\label{lem_G_lsc}
With the assumptions of \cref{lem_Gv_lsc}, the generative prior $\mathcal{G}: L^q(\Omega)\rightarrow [0,\infty]$ is weakly lower semicontinuous.
\begin{proof}
Let $v_m\rightharpoonup v$ in $L^q(\Omega)$. We want to show that $\mathcal{G}(v)\leq \liminf_{m\rightarrow\infty} \mathcal{G}(v_m)$ for which we can assume that $\liminf_{m\rightarrow\infty} \mathcal{G}(v_m)<\infty$, as otherwise there is nothing to prove. By moving to a (non-relabeled) subsequence, we can further assume that $\liminf_{m\rightarrow\infty} \mathcal{G}(v_m)=\lim_{m\rightarrow\infty} \mathcal{G}(v_m)$ and that $\mathcal{G}(v_m)<\infty$ for all $m$. By \cref{lem_Gv_attained}, for every $m$, there exists $(\mu_m,\theta_m)\in F(v_m)$, such that $\mathcal{G}(v_m)=G(\mu_m,\theta_m)$. The fact that $(\mathcal{G}(v_m))_m$ is bounded and the definitions of $G$ and $F(v_m)$ imply that the sequences $(\mu_m)_m$ and $(\theta_m)_m$ are bounded in $M$ and $\Theta$, respectively. As in the proof of \cref{lem_Gv_attained}, we can extract a further subsequence, again not relabeled, and find $\mu$, $\theta$ such that $\mu_{m}\xrightharpoonup{*} \mu$ in $M$ and $\theta_{m}\rightharpoonup \theta$ in $\Theta$ as $m\rightarrow\infty$. \cref{lem_Gv_lsc} then implies that $(\mu,\theta)\in F(v)$ and
\begin{equation*}
\begin{gathered}
\mathcal{G}(v) \leq G(\mu,\theta) \underbrace{\leq}_\text{\cref{lem_Gv_lsc}} \liminf\limits_{m\rightarrow\infty} G(\mu_m,\theta_m)=\liminf\limits_{m\rightarrow\infty} \mathcal{G}(v_m) = \lim\limits_{m\rightarrow\infty} \mathcal{G}(v_m).
\end{gathered}
\end{equation*}
\end{proof}
\end{lemma}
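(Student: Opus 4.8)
The plan is to run the standard direct-method argument for lower semicontinuity of an infimal-type functional, leaning entirely on the two preceding lemmas. Fix $v_m\rightharpoonup v$ in $L^q(\Omega)$ and set $\ell:=\liminf_{m\to\infty}\mathcal{G}(v_m)$. If $\ell=\infty$ there is nothing to prove, so assume $\ell<\infty$; passing to a (non-relabeled) subsequence I may assume $\mathcal{G}(v_m)\to\ell$ and $\mathcal{G}(v_m)<\infty$ for every $m$.

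First I would invoke \cref{lem_Gv_attained} to pick, for each $m$, a feasible pair $(\mu_m,\theta_m)\in F(v_m)$ realizing the infimum, i.e.\ $\mathcal{G}(v_m)=G(\mu_m,\theta_m)$. The next step is compactness. Since $G(\mu_m,\theta_m)=\mathcal{G}(v_m)$ is a convergent, hence bounded, sequence and $G(\mu,\theta)\ge\|\mu\|_{\mathcal{M}}$ by definition, the latent variables $(\mu_m)_m$ are bounded in $M$; the kernels $(\theta_m)_m$ are bounded in $\Theta$ automatically, because feasibility forces $\|(\theta_m)^l_{n,k}\|_2\le 1$ for all $n,k,l$. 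By the Banach--Alaoglu theorem in each $\mathcal{M}(\Omega^l)$ and reflexivity of $L^2(\Sigma)$, I pass to a further subsequence with $\mu_m\xrightharpoonup{*}\mu$ in $M$ and $\theta_m\rightharpoonup\theta$ in $\Theta$.

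Then \cref{lem_Gv_lsc} finishes the argument: part ii), applied with the convergences above and $(\mu_m,\theta_m)\in F(v_m)$, $v_m\rightharpoonup v$, yields $(\mu,\theta)\in F(v)$, so $(\mu,\theta)$ is admissible in the infimum defining $\mathcal{G}(v)$; and part i) gives $G(\mu,\theta)\le\liminf_{m\to\infty}G(\mu_m,\theta_m)$. Chaining these, $\mathcal{G}(v)\le G(\mu,\theta)\le\liminf_{m\to\infty}G(\mu_m,\theta_m)=\lim_{m\to\infty}\mathcal{G}(v_m)=\ell$, which is exactly the claimed inequality.

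I do not expect a genuine obstacle here: all the substantive work — the sequential weak*-continuity of the multi-layer convolution and the closedness of the constraint set $F(\cdot)$ — has been front-loaded into \cref{lem_conv_weak*_cont,lem_Gv_lsc}. The only point requiring care is the bookkeeping of subsequences (first pass to a subsequence turning the $\liminf$ into a limit, then extract weak*/weak limits of $(\mu_m,\theta_m)$), and the observation that it is precisely the attainment in \cref{lem_Gv_attained} that lets one work with a single selection $(\mu_m,\theta_m)$ satisfying $G(\mu_m,\theta_m)=\mathcal{G}(v_m)$ rather than a per-$m$ minimizing sequence, thereby avoiding a diagonal argument.
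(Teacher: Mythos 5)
Your proposal is correct and follows essentially the same route as the paper's proof: pass to a subsequence realizing the $\liminf$ with finite values, use \cref{lem_Gv_attained} to select attaining pairs $(\mu_m,\theta_m)\in F(v_m)$, extract weak*/weak limits by Banach--Alaoglu and reflexivity, and conclude via parts i) and ii) of \cref{lem_Gv_lsc}. The only (harmless) addition is your explicit remark that boundedness of $(\theta_m)_m$ comes for free from the feasibility constraint $\|\theta^l_{n,k}\|_2\le 1$.
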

 
\begin{lemma}\label{lem_G_coercive}With the assumptions of \cref{lem_Gv_lsc}, $\mathcal{G}: L^q(\Omega)\rightarrow [0,\infty]$ is proper and coercive.
\begin{proof}
In order to show that $\mathcal{G}$ is proper, we simply note that $\mathcal{G}(0)=0$. To prove coercivity, let $(v_m)_m\subset L^q(\Omega)$ such that $\norm{v_m}_q\rightarrow\infty$. We have to show that $\mathcal{G}(v_m)\rightarrow\infty$ as well. Assuming the contrary, there is a subsequence $(v_{m_k})_k$ of $(v_m)_m$ such that $(\mathcal{G}(v_{m_k}))_k$ is bounded. By \cref{lem_Gv_attained}, we can pick $(\mu_{m_k}, \theta_{m_k})\in F(v_{m_k})$ such that $\mathcal{G}(v_{m_k})=G(\mu_{m_k},\theta_{m_k})$. This implies again that $(\mu_{m_k})_k$ and $(\theta_{m_k})_k$ are bounded, which, from \cref{lem_conv_measures}, in turn implies that also $v_{m_k}$ is bounded in $L^q(\Omega)$, contradicting our assumption.
\end{proof}
\end{lemma}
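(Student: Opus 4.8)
The plan is to treat the two claims separately, reducing both to the continuity estimate $\norm{\mu*g}_q \le C \norm{\mu}_\mathcal{M}\norm{g}_2$ from \cref{lem_conv_measures} together with the attainment result \cref{lem_Gv_attained}. For \emph{properness} it suffices to exhibit one $v$ with $\mathcal{G}(v)<\infty$, and the obvious candidate is $v=0$: taking $\mu=0\in M$ and $\theta=0\in\Theta$, all three constraints in \eqref{eq_texture_prior} hold, since $0=\mu^1*_1\theta^1$ trivially, $\norm{\theta^l_{n,k}}_2=0\le 1$, and $Z\theta=Z0=0$ by linearity of $Z$. As $\Jc_l(0)=0$ by assumption, $G(0,0)=\norm{0}_\mathcal{M}+\sum_{l=2}^L\Jc_{l-1}(0)=0$, hence $\mathcal{G}(0)=0$; since $\mathcal{G}$ takes values in $[0,\infty]$ and is not identically $+\infty$, it is proper.

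For \emph{coercivity} I would argue by contradiction. Suppose $\norm{v_m}_q\to\infty$ but $\mathcal{G}(v_m)\not\to\infty$; then along a subsequence $(v_{m_k})_k$ the values $\mathcal{G}(v_{m_k})$ stay bounded, say by $C$. Each $\mathcal{G}(v_{m_k})$ is finite, so \cref{lem_Gv_attained} supplies minimizers $(\mu_{m_k},\theta_{m_k})\in F(v_{m_k})$ with $G(\mu_{m_k},\theta_{m_k})=\mathcal{G}(v_{m_k})\le C$. Because the relaxation penalties $\Jc_{l-1}$ are nonnegative, the definition of $G$ forces $\norm{\mu_{m_k}}_\mathcal{M}\le C$, hence in particular $\sum_{n=1}^{N_1}\norm{(\mu_{m_k})^1_n}_\mathcal{M}\le C$. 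The first constraint then reads $v_{m_k}=\mu^1_{m_k}*_1\theta^1_{m_k}=\sum_{n=1}^{N_1}(\mu_{m_k})^1_n*(\theta_{m_k})^1_{n,1}$, and since $\norm{(\theta_{m_k})^1_{n,1}}_2\le 1$ by the kernel constraint, the bound from \cref{lem_conv_measures} yields $\norm{v_{m_k}}_q\le C'\sum_{n=1}^{N_1}\norm{(\mu_{m_k})^1_n}_\mathcal{M}\le C'C$, contradicting $\norm{v_{m_k}}_q\to\infty$. Therefore $\mathcal{G}(v_m)\to\infty$ and $\mathcal{G}$ is coercive.

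There is no genuine obstacle here; the only points needing a little care are (i) that one must pass through \cref{lem_Gv_attained} (or, equivalently, choose a near-minimizer $(\mu_{m_k},\theta_{m_k})$ with $G(\mu_{m_k},\theta_{m_k})\le\mathcal{G}(v_{m_k})+1$) to obtain latent variables whose Radon norm is controlled by $\mathcal{G}(v_{m_k})$, since coercivity of $\mathcal{G}$ is exactly what is being proved and cannot be assumed, and (ii) that finiteness of $N_1$ is what turns the component-wise bound on $\norm{\mu_{m_k}}_\mathcal{M}$ into a bound on the single $L^q$-function $v_{m_k}$ via the triangle inequality and the estimate in \cref{lem_conv_measures}. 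Note that only the first layer enters this argument; the deeper layers and the functionals $\Jc_l$ play no role beyond being nonnegative.
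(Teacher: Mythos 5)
Your proposal is correct and follows essentially the same route as the paper: properness via $\mathcal{G}(0)=0$, and coercivity by contradiction, invoking \cref{lem_Gv_attained} to obtain minimizing pairs with bounded $\|\mu_{m_k}\|_\mathcal{M}$ and $\|\theta_{m_k}\|_2$, then the estimate of \cref{lem_conv_measures} to bound $\|v_{m_k}\|_q$. You merely spell out in more detail the first-layer triangle-inequality step that the paper leaves implicit.
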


\subsubsection{Analysis for inverse problems}

The goal of this section is to prove that \eqref{eq_cont_problem} admits a stable minimum, which converges for vanishing noise. This will be done for $\Gc$ as defined in \Cref{sec_texture_prior}. %
and for rather general functionals $\Rc$ and $\Dc_y$. 
For the sake of simplicity, we will provide the results in this section for linear forward operators $A\in \Lc(L^q(\Omega),Y)$, as this is possible without any additional assumptions on the forward model. Building on the properties of $\Gc$ as we have established in \Cref{sec_texture_prior}, a generalization of these results to non-linear inverse problems is possible under standard assumptions, see for instance \cite{Hofmann07_nonlinear_tikhonov_banach_mh,scherzer2008variational}. 
Also for the case of linear $A$, our proofs on existence, stability and convergence for vanishing noise can essentially be derived from the obtained properties of $\Gc$ with classical techniques (see for instance \cite{Hofmann07_nonlinear_tikhonov_banach_mh,scherzer2008variational}). However, as the infimal-convolution-based approach in our model as well as our technique of factoring out the kernel of the forward operator $A$ in order to avoid additional assumptions requires special care, we provide short self-contained proofs in the following.

The assumptions used in this setting are summarized as follows, with concrete examples of $Z$, $\Jc_l$, $\Rc$ and $\Dc_y$ fulfilling these assumptions provided in \cref{ex_data_reg} below.
\begin{assumption}[Assumptions for existence/stability]\label{ass_well_posedness} Let $q \in (1,2]$ and assume further:
\begin{enumerate}
\item The data fidelity functional $\mathcal{D}_y: Y\rightarrow [0,\infty]$ is proper, coercive, lower semicontinuous, and convex, and $A \in \Lc(L^q(\Omega),Y)$ is the forward operator.
\item The generative prior $\Gc:L^q(\Omega) \rightarrow [0,\infty]$ is given as in \eqref{eq_texture_prior}, where $Z\in \Lc(\Theta ,\R^{N_c})$ and the $\Jc_l:M^l \rightarrow [0,\infty]$ are weak* lower semicontinuous with $\Jc_l(0) = 0$.
\item\label{item_ass_R} The prior $\Rc:L^q(\Omega)\rightarrow [0,\infty]$ is proper, weakly lower semicontinuous, and there exists a closed subspace $U\subset L^q(\Omega)$ and a continuous linear projection $P_U:L^q(\Omega)\rightarrow U$ such that for every $u\in L^q(\Omega)$, $v\in U$, $\Rc(u) = \Rc(u+v)$ and $\| u - P_Uu \|_q\leq C \Rc(u)$ with $C>0$ and 
\begin{enumerate}
\item $U$ is finite dimensional or
\item $U\cap \ker(A)$ admits a complement $Z$ in $U$ and $\| u \|_q\leq D\|Au\|_Y$ for all $u\in Z$ and some $D>0$.
\end{enumerate}
\end{enumerate}
\end{assumption}
\begin{remark} Note that in \cref{ass_well_posedness}, \eqref{item_ass_R}, i) implies ii), so we will assume ii) in the following. Indeed, if $U$ is finite dimensional, then $U\cap \ker(A)$ admits a complement $Z$ in $U$. Moreover, the restriction $A|_Z:Z\rightarrow A(Z)$ is a bijective, linear operator between finite dimensional spaces, which implies $\| u \|_q = \| (A|_Z)^{-1}A|_Z u\|_q \leq \| (A|_Z)^{-1}\| \|A|_Zu\|_Y = \| (A|_Z)^{-1}\| \|Au\|_Y$ for all $u\in Z$.
\end{remark}
 
\begin{ex}\label{ex_data_reg}
We list some examples of functionals $\mathcal{D}_y$, $\Gc$ and $\Rc$ satisfying \cref{ass_well_posedness}.
For the \emph{data discrepancy} term $\mathcal{D}_y$, power-of-norm discrepancies such as $\mathcal{D}_y(z)= \frac{1}{r}\| z-y \|_Y^r$ with $r \in [1,\infty)$ are feasible. 
This includes $Y = L^2(\Xi)$ with $\Xi \subset \R^{N_{\Dc}}$ and $\mathcal{D}_y(z)= \frac{1}{2}\| z-y \|_{L^2(\Xi)}^2$, the standard choice in case the data is perturbed by Gaussian noise. For inpainting, $\mathcal{D}_y(z) = \mathcal{I}_{\{y\}}(z)$ is feasible and, in case the data is perturbed by Poisson distributed noise, it is feasible to choose 
$\mathcal{D}_y = \KL(\;.\;,y)$, with $\KL$ being the Kullback-Leibler divergence. For properties of $\KL$ that indeed ensure \cref{ass_well_posedness} to hold, we refer to \cite{holler20ip_review_mh}.

Regarding $\Gc$, possible specifications of $Z$ and $\Jc_l$ are $Z = 0$, $Z\theta = (\int_{\Sigma} \theta_{n,1}^1)_{n=1}^{N_1} $ or $Z\theta = (\int_{\Sigma} \theta_{n,k}^l)_{n,k,l}$, the last two yielding zero-mean constraints for the filter kernels of the last or of all layers, respectively. The $\Jc_l$ can for example be chosen as $\Jc_l = \Ic_{\{0\}}$, yielding the constraint $\mu^{l-1} = \mu^l \conv_l \theta^l$, or as $\Jc_l = \frac{\gamma}{2}\mathcal{V}_l^2$ with $\gamma>0$, where, for $\mu=(\mu_1, \ldots, \mu_{N_l})\in M^l$, 
\[\mathcal{V}_l(\mu) = \sup \left\{\; \sum\limits_{n=1}^{N_l}\int\limits_{\Omega^l} \phi_n(x)\; d\mu_n(x)\; | \; \phi=(\phi_1,\ldots, \phi_{N_l})\in C_0(\Omega^l)^{N_l}, \; \|\phi\|_2\leq 1 \right\}.\]
Indeed, the $\mathcal{V}_l$ (and, consequently, the $\Jc_l$) are weak* lower semicontinuous as being the pointwise supremum of weak* continuous functions. Also note that
$\mathcal{V}_l(\mu) = \|\mu\|_2 $ in case $ \mu \in L^2(\Omega)^{N_l}$, and $\mathcal{V}_l(\mu) = \infty$ else, 
yielding $\Jc_l(\mu^{l-1}-\mu^l*_l\theta^l) = \frac{\gamma}{2}\| \mu^{l-1}-\mu^l*_l\theta^l\|^2_2$, in the case $\mu^{l-1}\in L^2(\Omega^{l-1})^{N_{l-1}}$, a relaxed version of the constraint $\mu^{l-1} = \mu^l \conv_l \theta^l$.

Possible choices for the regularizing functional $\Rc$ are for instance $\Rc = \mathcal{I}_{\{0\}}$, which means that only $\Gc$ is used for regularization, or $\Rc = \TV: L^1(\Omega)\rightarrow [0,\infty]$ in case $1 < q\leq \frac{d}{d-1}$, where $d$ is the dimension of $\Omega$ (necessary for Poincar\'e's inequality). More generally, in the latter case a feasible choice is also $\Rc = J^{**}$, the bipolar/biconjugate of $J$, where, for $j:\mathbb{R}^d\rightarrow [0,\infty)$ coercive, convex, Lipschitz continuous and with linear growth,
\begin{equation*}
\begin{aligned}
J : L^1(\Omega) &\rightarrow [0,\infty] \\
u &\mapsto \begin{cases}
\int\limits_{\Omega} j(\nabla u)\; dx \quad &\text{if } u\in W^{1,1}(\Omega)\\
\infty \quad &\text{else.}
\end{cases}
\end{aligned}
\end{equation*}
In this case, $J^{**}$ coincides with the lower semicontinuous regularization of $J$. For background information and application-specific details in this context we refer to \cite{ekeland_teman_conv_analysis} and \cite{habring_master}, respectively. Lower semicontinuity and convexity of $\Rc$ hold by definition and the subspace $U$ from \cref{ass_well_posedness} is the space of all constant functions, as in the case $\Rc=\TV$. For further generalizations, allowing also for a spatial dependence of $j$ in $\Omega$, we refer to \cite{amar2008lower,hintermuller2018function}.
\end{ex}
 
Under \cref{ass_well_posedness}, existence can now be guaranteed as follows.
\begin{theorem}[Existence of solutions]\label{thm_existence}
Suppose, that \cref{ass_well_posedness} holds true and assume there exists $\hat{u}\in L^q(\Omega)$ such that $\Rc(\hat{u})<\infty$ and $\mathcal{D}_y(A\hat{u})<\infty$. Then \eqref{eq_cont_problem} admits a minimum.
\begin{proof}
Since, by assumption, $0\leq \Ec_y(\hat{u},0)<\infty$, we can find a minimizing sequence $(u_m,v_m)_m$ such that $\lim_{m \rightarrow \infty} \Ec_y(u_m,v_m) = \inf_{(u,v)} \Ec_y(u,v)\in [0,\infty)$. Since $\mathcal{D}_y$ and $\Rc$ are non-negative, the coercivity of $\mathcal{G}$ implies that $(v_m)_m$ is bounded in $L^q(\Omega)$. 
The sequence $(u_m)_m$, however, is not bounded in general, but we can construct a modified minimizing sequence that is bounded as follows: With $Z$ the complement of $U \cap \ker(A)$ as in \cref{ass_well_posedness}, let $P_Z:U\rightarrow Z$ be a continuous, linear projection onto $Z$ such that $I-P_Z:U\rightarrow U\cap \ker(A)$ is the corresponding projection onto $U\cap \ker(A)$ (here $I$ denotes the identity). With $P_U$ the projection onto $U$ from \cref{ass_well_posedness}, we define
\[ w_m \coloneqq u_m -(I-P_Z)P_Uu_m = u_m -P_Uu_m+P_ZP_Uu_m.\]
Since $(I-P_Z)P_Uu_m\in U\cap \ker(A)$, we have that $Au_m=Aw_m$ as well as $\Rc(u_m-v_m)=\Rc(w_m-v_m)$ and thus $\Ec_y(u_m,v_m)=\Ec_y(w_m,v_m)$. Hence, $(w_m,v_m)_m$ is also a minimizing sequence for $\Ec_y$. Additionally, we can now show that $(w_m)_m$ is bounded: By \cref{ass_well_posedness}, we have
\begin{equation}\label{eq_existence1}
\begin{gathered}
\| (u_m-v_m) -P_U(u_m-v_m)\|_q\leq C \Rc(u_m-v_m) \leq \frac{C}{s_\Rc(\nu)} \Ec_y(u_m,v_m),
\end{gathered}
\end{equation}
for $C>0$, therefore, $((u_m-v_m) -P(u_m-v_m))_m$ is bounded. Since $(v_m)_m$ is bounded, also $(v_m-P_Uv_m)_m$ is bounded and, consequently, so is $(u_m-P_Uu_m)_m$. Further, we find that, for constants $C,D>0$, 
\begin{equation*}
\begin{gathered}
\| P_ZP_Uu_m \|_q \leq D\|AP_ZP_Uu_m\|_Y = D\|AP_Uu_m\|_Y \leq D(\|A(u_m-Pu_m)\|_Y + \|Au_m\|_Y)\\
\leq D(\|A\| \underbrace{\|u_m-Pu_m\|_q}_\text{a)} + \underbrace{\|Au_m\|_Y}_\text{b)}),
\end{gathered}
\end{equation*}
where a) is bounded as just shown and b) is bounded by the coercivity of $\mathcal{D}_y$ and non-negativity of $\Rc$ and $\mathcal{G}$. Hence, $(w_m,v_m)_m$ is a bounded minimizing sequence and since $L^q(\Omega)$ is reflexive, we can pick (non-relabeled) weakly convergent subsequences of $(w_m)_m$ and $(v_m)_m$ such that $w_m\rightharpoonup w$ and $v_m\rightharpoonup v$ with $w,v\in L^q(\Omega)$. In particular, this also implies $w_m-v_m\rightharpoonup w-v$ and $Aw_m\rightharpoonup Aw$ by weak-to-weak continuity of $A$. Since all involved functionals $\Dc_y$, $\Rc$ and $\Gc$ are weakly lower semicontinuous and $s_\Rc(\nu), s_\mathcal{G}(\nu), \lambda>0$, we find that 
\[ \Ec_y(u,v) \leq \liminf\limits_{m\rightarrow\infty} \Ec_y(w_m,v_m) = \inf\limits_{\tilde{u},\tilde{v}\in L^q(\Omega)} \Ec_y(\tilde{u},\tilde{v}),\]
which concludes the proof.
\end{proof}
\end{theorem}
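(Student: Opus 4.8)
The plan is to use the direct method of the calculus of variations. First I would note that $\Ec_y$ is proper: by hypothesis there is $\hat u$ with $\Rc(\hat u)<\infty$ and $\Dc_y(A\hat u)<\infty$, and since $\Gc(0)=0$ (\cref{lem_G_coercive}), the pair $(\hat u,0)$ has finite energy, so $\inf\Ec_y\in[0,\infty)$. Then I would fix a minimizing sequence $(u_m,v_m)_m$. Because $\Dc_y$ and $\Rc$ are nonnegative and $s_\Gc(\nu)>0$, boundedness of $\Ec_y(u_m,v_m)$ forces $\Gc(v_m)$ to be bounded, and coercivity of $\Gc$ (\cref{lem_G_coercive}) then gives that $(v_m)_m$ is bounded in $L^q(\Omega)$.

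The subtlety is that $(u_m)_m$ itself need not be bounded, since the energy only controls $u$ through $Au$ and through $\Rc(u-v)$, and both are blind to translations of $u$ inside $U\cap\ker A$, with $U$ the invariance subspace from \cref{ass_well_posedness}. To fix this I would pass to a modified minimizing sequence: writing $P_U$ for the projection onto $U$ and $P_Z$ for a projection onto a complement $Z$ of $U\cap\ker A$ in $U$, I would set $w_m := u_m-(I-P_Z)P_U u_m$. Since $(I-P_Z)P_U u_m\in U\cap\ker A$, subtracting it changes neither $Au_m$ nor $\Rc(u_m-v_m)$, so $(w_m,v_m)_m$ is again minimizing. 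Boundedness of $(w_m)_m$ then follows from three ingredients: the bound $\|(u_m-v_m)-P_U(u_m-v_m)\|_q\le C\,\Rc(u_m-v_m)\le (C/s_\Rc(\nu))\,\Ec_y(u_m,v_m)$ together with boundedness of $(v_m)_m$ gives $(u_m-P_U u_m)_m$ bounded; coercivity of $\Dc_y$ gives $\|A u_m\|_Y$ bounded; and the estimate $\|u\|_q\le D\|Au\|_Y$ on $Z$ applied to $P_Z P_U u_m$, combined with $A P_Z P_U u_m=A P_U u_m$ and a triangle inequality, controls the remaining piece $P_Z P_U u_m$.

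With $(w_m,v_m)_m$ a bounded minimizing sequence in the reflexive space $L^q(\Omega)^2$, I would extract (non-relabeled) weakly convergent subsequences $w_m\rightharpoonup w$ and $v_m\rightharpoonup v$; then $w_m-v_m\rightharpoonup w-v$ and $A w_m\rightharpoonup A w$ by weak-to-weak continuity of the bounded linear operator $A$. To close, I would invoke weak lower semicontinuity of each term: $\Dc_y$ is convex and lower semicontinuous, hence weakly lower semicontinuous, and composes with the weak-to-weak continuous $A$; $\Rc$ is weakly lower semicontinuous by \cref{ass_well_posedness}; and $\Gc$ is weakly lower semicontinuous by \cref{lem_G_lsc}. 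Since $\lambda,s_\Rc(\nu),s_\Gc(\nu)>0$, this yields $\Ec_y(w,v)\le\liminf_m\Ec_y(w_m,v_m)=\inf\Ec_y$, so $(w,v)$ is a minimizer.

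I expect the main obstacle to be the second step --- producing the modified, bounded minimizing sequence. Setting up the projections $P_U$, $P_Z$ correctly, checking that the modification leaves the energy unchanged (which is exactly where the invariance and kernel structure in \cref{ass_well_posedness} enters), and bookkeeping the three boundedness estimates is the only genuinely delicate part; once boundedness is in hand, the lower-semicontinuity argument is routine given the properties of $\Gc$ already established.
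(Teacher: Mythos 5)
Your proposal is correct and follows essentially the same route as the paper's proof: properness via $(\hat u,0)$, boundedness of $(v_m)_m$ from coercivity of $\Gc$, the modified minimizing sequence $w_m = u_m-(I-P_Z)P_Uu_m$ with the same three boundedness estimates, and the concluding weak-compactness plus lower-semicontinuity argument. No gaps to report.
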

 
In our applications, we will use $\Rc = J^{**}$ with an appropriate functional $J:L^1(\Omega)\rightarrow [0,\infty ]$ and $\Jc_l = \frac{\gamma}{2}\mathcal{V}_l^2$ as mentioned in \cref{ex_data_reg}. For this setting, the regularity of solutions of \eqref{eq_cont_problem} can be described as follows.
 
\begin{proposition} In case the network has $L\geq 2$ layers, $\Rc = \TV$ or, more generally, $\Rc = J^{**}$ as in \cref{ex_data_reg} and $\Jc_l$ is such that $\Jc_l(\mu)<\infty$ implies $\mu\in L^2(\Omega^l)^{N_l}$, any solution $u$ of \eqref{eq_cont_problem} can be written as $u = (u-v) + v$ with $(u-v) \in \BV(\Omega)$ and $v \in C(\overline{\Omega})$.
\end{proposition}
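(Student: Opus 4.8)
The plan is to obtain both regularity properties by inspecting the finite energy of a solution together with the regularity result \cref{prop_regularity_network}, so that essentially no new estimates are required. Let $(u,v)\in L^q(\Omega)^2$ be a minimiser of \eqref{eq_cont_problem}. Under the standing assumptions (in particular those of \cref{thm_existence}, which make the minimal value finite) one has $\Ec_y(u,v)<\infty$; since $\lambda,s_\Rc(\nu),s_\Gc(\nu)>0$ and the data term, $\Rc$ and $\Gc$ are all non-negative, this forces $\Gc(v)<\infty$ \emph{and} $\Rc(u-v)<\infty$ simultaneously.

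For the generative part I would simply apply \cref{prop_regularity_network}: the hypothesis ``$\Jc_l(\mu)<\infty$ implies $\mu\in L^2(\Omega^l)^{N_l}$'' is exactly what is required there, and $L\geq 2$, so $\Gc(v)<\infty$ yields $v\in C(\overline{\Omega})$ at once. For the remaining part $u-v$, the first observation is that $u-v\in L^q(\Omega)$ embeds continuously into $L^1(\Omega)$ because $\Omega$ is bounded and $q>1$; hence, if $\Rc=\TV$, then $\TV(u-v)=\Rc(u-v)<\infty$ together with $u-v\in L^1(\Omega)$ gives $u-v\in\BV(\Omega)$ directly from the definition of $\BV$.

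In the general case $\Rc=J^{**}$ with $j$ convex, coercive, Lipschitz and of linear growth as in \cref{ex_data_reg}, the plan is to compare $J^{**}$ with the total variation. Linear growth and coercivity of $j$ provide constants $a>0$, $b\geq 0$ with $j(\xi)\geq a|\xi|-b$ for all $\xi\in\R^d$, so that
\[ J(w)=\int_\Omega j(\nabla w)\,dx\ \geq\ a\,\TV(w)-b|\Omega| \qquad\text{for } w\in W^{1,1}(\Omega), \]
and, since $J\equiv\infty$ on $L^1(\Omega)\setminus W^{1,1}(\Omega)$, the inequality $J\geq a\,\TV(\cdot)-b|\Omega|$ in fact holds on all of $L^1(\Omega)$. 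The right-hand side is convex and $L^1$-lower semicontinuous (as $\TV$ is), hence it is dominated by the biconjugate of $J$; this passage to $J^{**}$ is the one step that is not purely formal, and it is where I expect the only (mild) subtlety to lie — it rests on the fact that $J^{**}$ is the largest convex lower semicontinuous minorant of $J$. From $J^{**}\geq a\,\TV(\cdot)-b|\Omega|$ one gets $\TV(u-v)\leq \tfrac{1}{a}\big(\Rc(u-v)+b|\Omega|\big)<\infty$, so again $u-v\in\BV(\Omega)$. Combining the two parts yields the asserted decomposition $u=(u-v)+v$ with $u-v\in\BV(\Omega)$ and $v\in C(\overline{\Omega})$.
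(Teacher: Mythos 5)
Your proposal is correct and follows essentially the same route as the paper: finiteness of the energy at a minimizer gives $\Gc(v)<\infty$ and $\Rc(u-v)<\infty$, \cref{prop_regularity_network} then yields $v\in C(\overline{\Omega})$, and $\Rc(u-v)<\infty$ yields $u-v\in\BV(\Omega)$. The only difference is that the paper delegates the last implication to the cited references, whereas you prove it directly via the linear lower bound $j(\xi)\geq a|\xi|-b$ (valid for convex, coercive, nonnegative $j$) and the characterization of $J^{**}$ as the largest convex lower semicontinuous minorant of $J$ — a correct and self-contained filling-in of that step.
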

\begin{proof}
This is a direct consequence of \cref{prop_regularity_network} and the fact that $\Rc(u-v)<\infty$ implies $u-v\in BV(\Omega)$, see for instance \cite{bvfunctions,habring_master}.
\end{proof}

In order to discuss stability of the solution, we first introduce notions of convergence and coercivity of the data fidelity term for varying data, which are taken from \cite{holler20ip_review_mh}. With $(y_m)_m$ and $ y$ in $ Y$, we say $\mathcal{D}_{y_m}$ converges to $\mathcal{D}_y$ and write $\mathcal{D}_{y_m}\rightarrow \mathcal{D}_y$, if 
\begin{equation}\label{defin_data_conv}
\begin{cases}
\mathcal{D}_y(z)\leq \liminf\limits_{m\rightarrow\infty}\mathcal{D}_{y_m}(z_m)\quad &\text{for every } z_m\rightharpoonup z \text{ in } Y,\\
\mathcal{D}_y(z)\geq \limsup\limits_{m\rightarrow\infty}\mathcal{D}_{y_m}(z)\quad &\text{for every } z \in Y.
\end{cases}
\end{equation}
Moreover, we say that $(\Dc_{y_m})_m$ is equi-coercive, if there exists a coercive function $\Dc_0:Y \rightarrow [0,\infty]$ such that $\Dc_0 \leq \Dc_{y_m}$ for all $m$. 
 
Note that the notions of convergence and equi-coercivity ensure the properties of the data term that are necessary for stability estimates, without requiring to explicitly consider the interplay of a convergence of $(y_m)_m$ in $Y$ and properties of the data term. For standard choices of $\Dc_y$, they can be ensured via convergence of $(y_m)_m$ as follows: In case $\Dc_y(z) = (1/q)\|y-z\|_Y^q$, $\Dc_{y_m} \rightarrow \Dc_y$ and equi-coercivity hold, whenever $y_m \rightarrow y$ in $Y$ as $m\rightarrow \infty$. In case $\Dc_y(z) = \KL(z,y)$ with $Y = L^1(\Xi)$, $\Dc_{y_m} \rightarrow \Dc_y$ and equi-coercivity hold, whenever $\KL(y,y^m) \rightarrow 0$ as $m \rightarrow \infty$ and $y_m \leq C y$ for some $C>0$ and all $m$, see \cite[Example 2.16]{holler20ip_review_mh}.
 
Within the proof of stability, we will further make use of the following lemma.
\begin{lemma}\label{lem_sum_lim}
Let $(a_m)_m$ and $(b_m)_m$ be real sequences and $a,b\in \mathbb{R}$. Assume further, $a_m+b_m\rightarrow a+b$ and $a\leq\liminf\limits_{m\rightarrow\infty}a_m$, $b\leq\liminf\limits_{m\rightarrow\infty}b_m$. Then, $a_m\rightarrow a$ and $b_m\rightarrow b$.
\begin{proof} We simply compute
\[a \leq \liminf\limits_{m\rightarrow\infty}(a_m + b_m-b_m) = a+b+\liminf\limits_{m\rightarrow\infty}(-b_m) = a+b-\limsup\limits_{m\rightarrow\infty}b_m.\]
Hence, $\limsup\limits_{m\rightarrow\infty}b_m\leq b \leq \liminf\limits_{m\rightarrow\infty}b_m$ and accordingly $b_m\rightarrow b$. As a result, also $a_m\rightarrow a$.
\end{proof}
\end{lemma}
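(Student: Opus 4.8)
The plan is to reduce the whole statement to a claim about the single sequence $(b_m)_m$ and then recover the conclusion for $(a_m)_m$ essentially for free, exploiting that the sum $(a_m+b_m)_m$ is assumed to \emph{converge}. The key identity is $b_m = (a_m+b_m) - a_m$. Since $(a_m+b_m)_m$ converges, taking limit superior on both sides splits cleanly: $\limsup_{m\to\infty} b_m = \lim_{m\to\infty}(a_m+b_m) + \limsup_{m\to\infty}(-a_m) = (a+b) - \liminf_{m\to\infty} a_m$. Feeding in the hypothesis $a \le \liminf_{m\to\infty} a_m$ then gives $\limsup_{m\to\infty} b_m \le (a+b) - a = b$.

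Next I would combine this with the second hypothesis $b \le \liminf_{m\to\infty} b_m$ to get the chain $\limsup_{m\to\infty} b_m \le b \le \liminf_{m\to\infty} b_m \le \limsup_{m\to\infty} b_m$, which forces all of these quantities to coincide with $b$; hence $b_m \to b$. This in particular shows $\liminf_{m\to\infty} a_m$ is finite (so the $\limsup$ arithmetic above is legitimate): otherwise $\liminf_{m\to\infty} a_m = +\infty$ would force $b_m = (a_m+b_m) - a_m \to -\infty$, contradicting $b \le \liminf_{m\to\infty} b_m$ with $b \in \mathbb{R}$. Finally, writing $a_m = (a_m+b_m) - b_m$ and using that both terms on the right now converge, I obtain $a_m \to (a+b) - b = a$, completing the proof. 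By symmetry one could equally have started with $(a_m)_m$.

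I do not expect any real obstacle: the statement is a short exercise in the arithmetic of $\liminf$ and $\limsup$. The only point deserving a moment of care is the validity of splitting $\limsup_{m\to\infty}\big((a_m+b_m) - a_m\big)$ into $\lim_{m\to\infty}(a_m+b_m) + \limsup_{m\to\infty}(-a_m)$; this is permissible precisely because one of the two summands, $(a_m+b_m)_m$, is convergent, so no indeterminate $\infty - \infty$ arises, and the hypotheses already bound $\liminf_{m\to\infty} a_m$ and $\liminf_{m\to\infty} b_m$ from below by finite numbers.
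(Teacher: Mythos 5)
Your proposal is correct and follows essentially the same route as the paper: both arguments use the convergence of $(a_m+b_m)_m$ to split a $\liminf$/$\limsup$ of a difference, deduce $\limsup_{m\to\infty} b_m \le b$ from the hypothesis on $a$, squeeze to get $b_m\to b$, and then recover $a_m\to a$ from the convergent sum. Your extra remark ruling out $\liminf_{m\to\infty}a_m=+\infty$ is a harmless bit of additional care that the paper omits.
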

 
\begin{theorem}[Stability]\label{thm_stability}
Let \cref{ass_well_posedness} hold and assume that $\mathcal{D}_{y_m}\rightarrow\mathcal{D}_y$ in the sense of \eqref{defin_data_conv} and that $(\mathcal{D}_{y_m})_m$ is equi-coercive. Let, for each $m$, $(u_m,v_m)$ be a minimizer of $\Ec_{y_m}$. Then we have either
\begin{itemize}
\item[i)] $\Ec_{y_m}(u_m,v_m)\rightarrow\infty$ as $m\rightarrow\infty$ and $\Ec_y(u,v) = \infty$ for any $u,v$, or
\item[ii)] $\Ec_{y_m}(u_m,v_m)\rightarrow \min\limits_{\tilde{u},\tilde{v}\in L^q(\Omega)} \Ec_y(\tilde{u},\tilde{v})< \infty$ as $m\rightarrow\infty$ and, up to shifts in $(U \cap \ker(A)) \times \{0\}$, $(u_m,v_m)_m$ admits a weak accumulation point $(u,v)\in L^q(\Omega)^2$ that minimizes $\Ec_y$.
\end{itemize}
Moreover, in the latter case, for each subsequence $(u_{m_k},v_{m_k})_k$ converging weakly to some $(u,v)\in L^q(\Omega)^2$, we have that $(u,v)$ is a minimizer of $\Ec_y$ and $\mathcal{D}_{y_{m_k}}(Au_{m_k})\rightarrow\mathcal{D}_{y}(Au)$, $\Rc(u_{m_k}-v_{m_k})\rightarrow\Rc(u-v)$ and $\mathcal{G}(v_{m_k})\rightarrow\mathcal{G}(v)$ as $m\rightarrow\infty$. If $\Ec_y$ admits a unique minimizer, then $(u_m,v_m)\rightharpoonup (u,v)$.
\begin{proof}
Assume first that $\Ec_{y_m}(u_m,v_m)\rightarrow\infty$ as $m\rightarrow\infty$. In case there exist $u,v\in L^q(\Omega)$ such that $\Ec_y(u, v)<\infty$, from convergence of the data fidelity term it follows that
\[\limsup\limits_{m\rightarrow\infty}\Ec_{y_m}(u_m,v_m)\underbrace{\leq}_\text{optimality}\limsup\limits_{m\rightarrow\infty}\Ec_{y_m}(u,v)\leq \Ec_y(u,v)<\infty,\]
which contradicts our assumption. Therefore,  $\Ec_y$ cannot admit a finite value, showing i).

Now assume to the contrary $\Ec_{y_m}(u_m,v_m)\not\rightarrow\infty$, that is $\liminf\limits_{m\rightarrow\infty}\Ec_{y_m}(u_m,v_m)<\infty$ and let $(u_{m_k},v_{m_k})_k$ be a subsequence such that 
\[\liminf\limits_{m\rightarrow\infty} \Ec_{y_m}(u_m,v_m)=\lim\limits_{k\rightarrow\infty} \Ec_{y_{m_k}}(u_{m_k},v_{m_k}).\]
We define $w_m = u_m - (I-P_Z)P_Uu_m$ as in the proof of \cref{thm_existence} such that for each m, $(w_m,v_m)$ is again a minimizer of $\Ec_{y_m}$. Using the equi-coercivity of $(\mathcal{D}_{y_m})_m$, we find that $(w_{m_k},v_{m_k})_k$ is bounded and hence admits a (non-relabeled) subsequence weakly converging to some $(u,v)$. Now, let $\tilde{u},\tilde{v}\in L^q(\Omega)$ be arbitrary. Using the convergence of the data term, weak-to-weak continuity of the operator $A$ and the weak lower semicontinuity of $\Rc$ and $\mathcal{G}$, we obtain
\begin{equation}\label{eq_thm_stability1}
\begin{aligned}
\Ec_y(u,v)
& \leq \liminf\limits_{k\rightarrow\infty} \Ec_{y_{m_k}}(w_{m_k},v_{m_k}) =\liminf\limits_{m\rightarrow\infty} \Ec_{y_{m}}(w_{m},v_{m}) \\
 & \leq \limsup\limits_{m\rightarrow\infty} \Ec_{y_{m}}(w_{m},v_{m})
\underbrace{\leq}_\text{optimality}\limsup\limits_{m\rightarrow\infty} \Ec_{y_{m}}(\tilde{u},\tilde{v})\leq \Ec_{y}(\tilde{u},\tilde{v}).
\end{aligned}
\end{equation}
This shows that $(u,v)$ is a minimizer of $\Ec_y$. Moreover, if we plug in $(\tilde{u},\tilde{v})=(u,v)$, we find that 
\[\min\limits_{\tilde{u},\tilde{v}\in L^q(\Omega)} \Ec_y(\tilde{u},\tilde{v})=\Ec_y(u,v)=\lim\limits_{m\rightarrow\infty}\Ec_{y_{m}}(u_{m},v_{m}),\]
which finishes the proof of ii).

To conclude the proof, assume that $(u_{m_k},v_{m_k})_k$ is a subsequence of $(u_m,v_m)_m$, weakly converging to $(u,v)$. Then
\[ \Ec_y(u,v) \leq \liminf\limits_{k\rightarrow\infty} \Ec_{y_{m_k}}(u_{m_k},v_{m_k}) =\lim\limits_{m\rightarrow\infty} \Ec_{y_{m}}(u_{m},v_{m}) = \min\limits_{\tilde{u},\tilde{v}\in L^q(\Omega)} \Ec_y(\tilde{u},\tilde{v}).\]
Hence, $(u,v)$ is a minimizer of $\Ec_y$ and \cref{lem_sum_lim} implies the convergence of the three parts $(\mathcal{D}_{y_{m_k}}(Au_{m_k}))_k$, $(\Rc(u_{m_k}))_k$ and $(\mathcal{G}(u_{m_k}))_k$ individually. Finally, assume $(u,v)$ is the unique minimizer of $\Ec_y$.  Then we find that $U\cap\ker (A)=\{ 0 \}$, since if there was $0\neq w\in U\cap \ker (A)$, then $(u+w,v)$ would be a second, distinct solution. Therefore, 
\[ u_m = u_m - (I-P_Z)P_Uu_m\]
and consequently $(u_m,v_m)_m$ is already bounded. Thus, every subsequence of $(u_m,v_m)_m$ contains another subsequence weakly converging to $(u,v)$, which implies $(u_m,v_m)\rightharpoonup (u,v)$ by a standard contradiction argument.
\end{proof}
\end{theorem}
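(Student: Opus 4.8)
The plan is to follow the classical stability scheme for variational regularization, adapted both to the infimal-convolution structure of \eqref{eq_cont_problem} and to the necessity of factoring out $U\cap\ker(A)$. First I would set up a dichotomy according to whether $\Ec_{y_m}(u_m,v_m)\to\infty$ or not. In the divergent case I argue by contradiction: if some $(u,v)$ satisfied $\Ec_y(u,v)<\infty$, then the $\limsup$-part of the data convergence \eqref{defin_data_conv} together with optimality of $(u_m,v_m)$ for $\Ec_{y_m}$ would give $\limsup_m \Ec_{y_m}(u_m,v_m)\le\limsup_m\Ec_{y_m}(u,v)\le\Ec_y(u,v)<\infty$, a contradiction; hence $\Ec_y\equiv\infty$ and case i) holds.

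In the complementary case I would pass to a subsequence realizing $\liminf_m\Ec_{y_m}(u_m,v_m)$ as a genuine limit and replace $u_m$ by the shifted iterate $w_m\coloneqq u_m-(I-P_Z)P_Uu_m$, exactly as in the proof of \cref{thm_existence}; since $(I-P_Z)P_Uu_m\in U\cap\ker(A)$, the pair $(w_m,v_m)$ is still a minimizer of $\Ec_{y_m}$ and differs from $(u_m,v_m)$ only by a shift in $(U\cap\ker(A))\times\{0\}$. The core step is then boundedness of $(w_m,v_m)_m$: coercivity of $\Gc$ (\cref{lem_G_coercive}) bounds $(v_m)_m$ in $L^q(\Omega)$; the estimate $\|(u_m-v_m)-P_U(u_m-v_m)\|_q\le C\,\Rc(u_m-v_m)$ from \cref{ass_well_posedness}, together with the uniform bound on $\Ec_{y_m}(u_m,v_m)$, bounds $(u_m-P_Uu_m)_m$; and finally the bound $\|\cdot\|_q\le D\|A\cdot\|_Y$ on the complement $Z$ of $U\cap\ker(A)$, combined with equi-coercivity of $(\Dc_{y_m})_m$ (which bounds $\|Au_m\|_Y$), controls $\|P_ZP_Uu_m\|_q$. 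Reflexivity of $L^q(\Omega)$ then yields a weakly convergent subsequence $(w_{m_k},v_{m_k})\rightharpoonup(u,v)$.

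For optimality of the limit I would chain, for arbitrary $(\tilde u,\tilde v)$,
\[
\Ec_y(u,v)\le\liminf_k\Ec_{y_{m_k}}(w_{m_k},v_{m_k})=\lim_m\Ec_{y_m}(w_m,v_m)\le\limsup_m\Ec_{y_m}(\tilde u,\tilde v)\le\Ec_y(\tilde u,\tilde v),
\]
using the $\liminf$-part of \eqref{defin_data_conv}, weak-to-weak continuity of $A$ and weak lower semicontinuity of $\Rc$ and $\Gc$ (\cref{lem_G_lsc}) for the first inequality, optimality of $(w_m,v_m)$ for the penultimate one, and the $\limsup$-part of \eqref{defin_data_conv} for the last; taking $(\tilde u,\tilde v)=(u,v)$ also gives $\Ec_{y_m}(u_m,v_m)\to\min\Ec_y$, completing ii). For the ``moreover'' part, the same lower-semicontinuity chain applied to any weakly convergent subsequence of the \emph{original} sequence shows its limit minimizes $\Ec_y$, and since the limiting energy equals the sum of the three nonnegative, individually weakly lower semicontinuous terms, \cref{lem_sum_lim} (applied twice) forces $\Dc_{y_{m_k}}(Au_{m_k})$, $\Rc(u_{m_k}-v_{m_k})$ and $\Gc(v_{m_k})$ to converge individually. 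Finally, uniqueness of the minimizer forces $U\cap\ker(A)=\{0\}$ (otherwise a nonzero shift yields a second minimizer), so $w_m=u_m$, $(u_m,v_m)_m$ is already bounded, and a standard subsequence-of-subsequences argument gives $(u_m,v_m)\rightharpoonup(u,v)$.

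The main obstacle I expect is exactly the boundedness argument for $(w_m,v_m)_m$: unlike in the textbook setting, coercivity of the regularizer alone does not control $u_m$, so one must orchestrate the $\Gc$-bound on $v_m$, the $\Rc$-bound on $u_m-v_m$ modulo $U$, and the data-coercivity bound on $\|Au_m\|_Y$ through the splitting $U=(U\cap\ker(A))\oplus Z$. This is precisely where \cref{ass_well_posedness} and equi-coercivity of the data terms enter together, and where passing to the shifted iterate $w_m$ is essential.
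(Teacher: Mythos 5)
Your proposal is correct and follows essentially the same route as the paper's proof: the same dichotomy, the same shift $w_m=u_m-(I-P_Z)P_Uu_m$ borrowed from the existence proof, the same chain of inequalities for optimality of the limit, and the same use of \cref{lem_sum_lim} and the subsequence-of-subsequences argument for the final claims. The only difference is that you spell out the boundedness of $(w_m,v_m)_m$ in more detail, which the paper delegates to the proof of \cref{thm_existence}.
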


Convergence of solutions for vanishing noise can be proven with similar techniques as in \cref{thm_stability} and we refer to \Cref{supplement_proofs} of supplementary material for an additional result in that direction.

\section{Numerical Results}\label{experiments}
 
In this section, we provide numerical results for imaging applications such as inpainting, denoising, deconvolution and JPEG decompression. Further results on super-resolution can be found in the supplement \Cref{SM_section_super-resolution}. For all results, we use a particular instance of the proposed approach with the following architecture:
 
\begin{enumerate}[i)]
\item $L = 3$ and $N_l = 8$ for each $l = 1,2,3$. \label{concrete_model_dimensions}
\item $E_l = \{ (n,n)\st n \in \{1,\ldots,8\}\}$ for $l=2,3$ and $E_1 = \{(n,1) \st n \in \{1,\ldots,8\}\}$. \label{concrete_model_connections}
\item $\Rc = J^{**}$, where, for $\epsilon>0$, $J: L^1(\Omega)\rightarrow [0,\infty]$ is given as
 \label{concrete_model_r}
\begin{equation}\label{eq_application_cartoon_prior}
J(u) =  \begin{cases}
\frac{1}{|\Omega |}\int\limits_{\Omega}\sqrt{|\nabla u|^2 + \epsilon}\; dx \quad &\text{if } u\in W^{1,1}(\Omega),\\
\infty \quad &\text{else.}
\end{cases}
\end{equation}
\item For $l=1,2$, as detailed in \cref{ex_data_reg}, 
\[\Jc_l(\mu) = \begin{cases}
\frac{\gamma}{2}\| \; \mu \; \|_2^2\quad &\text{if } \mu \in L^2(\Omega^l)^{N_l},\\
\infty\quad &\text{ else.} \end{cases}\]
\label{concrete_model_j}
\end{enumerate}
The choices \eqref{concrete_model_dimensions} and \eqref{concrete_model_connections}  were made as they result in a rather simple network that still delivers significantly improved results compared, e.g., to a single-layer setting \cite{Chambolle2020}. Note in particular that, with our choice of $E_l$, different latent variables are only connected after the last layer. Besides aiming at a simple model with a reduced number of parameters, this choice is motivated by the observation that for deeper networks, as opposed to fully connected latent variables, it allows for a clear differentiation of features, see \cref{fig_network_decomposition} for an example. 

The choices of $\Rc$  and $\Jc_l$ are a compromise to allow for a rather simple numerical solution algorithm with convergence guarantees. From the modeling perspective, for $\Rc $ choices like $\TV$ or higher-order derivative-based regularization functionals (see \cite{holler20ip_review_mh}) as well as the choice $\Jc_l = \Ic_{\{0\}}$ seem preferable, but are more difficult to realize numerically, as we are dealing with a non-smooth, non-convex optimization problem.
 
Using this particular architecture, the energy in \eqref{eq_cont_problem} is discretized for images $u \in \R^{N_x \times N_y}$, where the forward operator $A$ and the data discrepancy $\Dc_y$ are chosen application specific as provided in \Cref{subsec:inpainting} to \Cref{subsec:jpeg}. In addition, as further means of dimensionality reduction in the network, we introduce a stride in the convolutions between different layers and denote the corresponding strided upconvolution as $*_\sigma$, see \Cref{supplement_discrete_model} for details.

Replacing $v$ in \eqref{eq_cont_problem} with its representation by $(\mu,\theta)$ and using the same notation for the discretized functionals as for the continuous counterparts, the corresponding minimization problem used in the implementation read as
\begin{multline}\label{eq_discrete_problem}
\tag{DP(y)}
\min\limits_{u,\mu,\theta} \;  \lambda\mathcal{D}_{y}(Au)+ s_\Rc(\nu) J( u-\sum_{n=1}^{N_1} \mu^1_{n}*_\sigma\theta^1_{n})
 + s_\mathcal{G}(\nu)\sum\limits_{l=1}^L \sum_{n=1}^{N_l} \norm{\mu^l_n}_1  \\ + \gamma\sum\limits_{l=2}^L \sum\limits_{n=1}^{N_l} \frac{1}{2}\left\| \mu^{l-1}_{n} -\mu^l_{n}*_\sigma\theta^l_n\right\|_2^2,
\end{multline}
subject to
\begin{equation*}
  \begin{cases}
  \|\theta^l_n\|^2_2 \leq 1 \quad \text{for all $l,n$}, \\
  \sum\limits_{i,j=1}^{r} (\theta^1_n)_{i,j} = 0, \quad \text{for all $n$}.
  \end{cases}
\end{equation*}
The discretizations of the $\nabla$, the different norms and integrals are rather standard and we refer to the supplement \Cref{supplement_discrete_model} for details.

A numerical solution of the discretized problem is obtained using the iPALM algorithm \cite{Pock_2016}, for which, in case of boundedness, convergence to a stationary point can be ensured. We implemented the method in Python for GPUs based on PyOpenCL \cite{klockner2012pycuda}. For additional details on the discretization, the algorithm and the implementation, we refer to the supplementary material of this paper and the publicly available source code \cite{gen_reg_git}. We also note that, in the discrete setting, we use a strided convolution, see \eqref{eq:upconvolution}, as a means of increasing the resolution of the latent variables from the deepest to the last layer, thereby reducing the dimensionality of the network.

\begin{figure}
\centering
\includegraphics[height = 2.5cm]{images_gen_reg_inpainting_barbara_crop_original.png}
\includegraphics[height = 2.5cm]{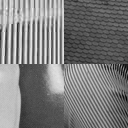}
\includegraphics[height = 2.5cm]{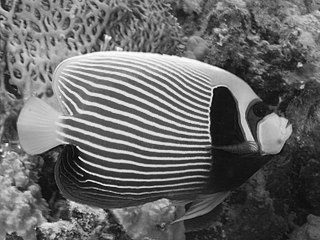}
\includegraphics[height = 2.5cm]{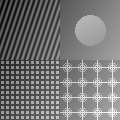}
\includegraphics[height = 2.5cm]{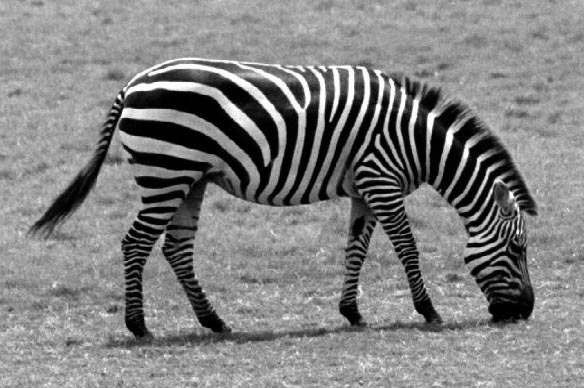}
\caption{Test images. From left to right, we refer to them as \emph{Barbara}, \emph{mix}, \emph{fish} (\href{https://commons.wikimedia.org/wiki/File:Pomocanthus_imperator_facing_right.jpg}{"Pomocanthus imperator facing right"}, by \href{https://commons.wikimedia.org/wiki/User:Albert_kok}{Albert kok}, licensed under \href{https://creativecommons.org/licenses/by-sa/4.0/}{CC BY-SA 4.0}), \emph{patchtest} and \emph{zebra} \cite{deep_image_prior_git}}
\label{fig_original_images}
\end{figure}
 
The test images we use for the numerical experiments can be found in \cref{fig_original_images}, and all experiments for the proposed generative prior as shown in this paper can be reproduced with the publicly available source code \cite{gen_reg_git}.  Note that the experiments with the \emph{patchtest} and \emph{zebra} images are omitted here for the sake of brevity and can be found in the  supplementary material \Cref{supplement_experiments}.
 
\noindent\textbf{Parameter choice.} Our method has three classes of parameters: i) The network architecture, which is fixed for all experiments and was chosen in view of obtaining a rather simple network, that still delivers good results, see \cref{table_parameters_general}. ii) The parameters $\epsilon$ and $\gamma>0$, which are necessary to achieve a smoothing of non-smooth functionals and constraints. We found their choice to be rather uncritical as long as they are sufficiently small and large, respectively, and again fixed those parameters for all experiments in the paper, see again \cref{table_parameters_general}. iii) The parameters $\lambda$ and $\nu$, which are the two effective parameters of our method. They define the trade-off between data fidelity and regularization, and between the priors $\Rc$ and $\Gc$, respectively. The choice of a regularization parameter $\lambda$ is necessary for any regularization method and can be made, e.g., according to classical parameter choice strategies, such as the discrepancy principle or the L-curve approach. The parameter $\nu$ can be interpreted as model parameter, that can be fixed for a class of images under consideration, independent of the noise level. In our experiments, for each application, we allow only a single parameter to vary (in order to obtain visually optimal results), while the other, if applicable, was fixed, see \cref{table_parameters}.

\noindent\textbf{Comparison to related methods.} We compare our method to the deep image prior \cite{deep_image_prior} (DIP), to \cite{Chambolle2020} (CL), which can be seen as convex relaxation of a single layer version of the proposed method, and to total generalized variation regularization \cite{bredies2010tgv} (TGV).
Details on the implementation used for these methods, as well as the parameter choice, can be found in \Cref{supplement_experiments}, and in particular \cref{table_numiter_dip}. In short, for each competing method, parameters were chosen in order to obtain visually optimal results, and for DIP in particular, also the network architecture was adapted to each experiment as suggested by the authors. Results for a larger experiment for inpainting with fixed parameters on subsets of the ImageNet ILSVRC2017 DET test data set \cite{ILSVRC15} can be found in \cref{tbl_inpainting_imagenet}, see \Cref{subsec:inpainting} below for details.
Note that we did not compare deconvolution to DIP since this experiment was also not presented in the original paper. Moreover, we compare to CL only on images which are also used in the original paper since the authors published their code for these images with optimal parameters.

\begin{table}[h]
\caption[PSNR/SSIM values]{PSNR/SSIM values of all results shown in the paper. Bold indicates the best result. A "-" indicates that the experiment was not carried out since it was not part of the original reference used for comparison or that no ground truth is available.}
\centering
\begin{tabular}{ p{2.7cm} p{1.8cm} p{1.8cm} p{1.8cm} p{1.8cm} p{1.8cm}}
\toprule
 & \emph{Barbara} & \emph{mix} & \emph{patchtest} & \emph{fish} & \emph{zebra}\\
 \midrule
 \textbf{Inpainting}\\
 TGV      & $20.43/0.71$       & $20.07/0.63$       & $19.21/0.64$       & $20.73/0.76$ & $25.66/0.83$\\
 CL         & $22.05/0.79$       & $25.93/0.85$       & 26.13/0.90       & - & - \\
 DIP        & 25.88/\textbf{0.91}       & \textbf{28.72}/\textbf{0.9}      & 25.78/0.9       & \textbf{26.19}/\textbf{0.88} & $27.73/0.86$\\
 proposed   & \textbf{27.17}/0.9  & 28.6/0.89  & \textbf{26.97}/\textbf{0.91}   & 25.19/0.87 & \textbf{28.59}/\textbf{0.91}\\
 
 \textbf{Denoising}\\
 TGV      & $23.64/0.75$       & $22.88/0.62$       & $21.7/0.69$      & $25.06/0.82$ & $26.4/0.79$\\
 CL         & $23.24/0.72$       & $25.41/0.75$       & $24.66/0.83$       & - & - \\
 DIP        & \textbf{26.68}/\textbf{0.82}  & $26.21/0.76$       & \textbf{26.54}/\textbf{0.84}  & \textbf{26.8}/\textbf{0.85} & \textbf{28.16}/0.80 \\
 proposed   & $26.21/0.81$       & \textbf{26.83}/\textbf{0.81}  & 25.64/\textbf{0.84}       & 25.52/0.82 & 27.19/\textbf{0.81}\\
 
 \textbf{Deconvolution}\\
 TGV      & 22.47/0.73       & 23.56/0.70 & 23.97/\textbf{0.84}         & 23.73/0.80       & 26.44/\textbf{0.83}\\
 CL         & 22.65/0.73       & 24.46/0.72 & - & - & - \\
 proposed   & \textbf{23.96}/\textbf{0.79}	  & \textbf{25.28}/\textbf{0.77}  & \textbf{24.61}/0.83  & \textbf{23.77}/\textbf{0.81}  & \textbf{26.58}/0.82\\
 
 \textbf{Super-resolution}\\
 DIP & - & - & - & 25.88/0.78 & \textbf{24.33}/0.71 \\
 proposed & - & - & - & \textbf{26.11}/\textbf{0.82} & 23.79/\textbf{0.77}\\
 
 \textbf{JPEG-decompression}\\
 DIP & 24.36/\textbf{0.81} & 25.19/\textbf{0.81} & \textbf{24.77}/\textbf{0.87} & \textbf{26.43}/\textbf{0.89} & \textbf{29.00}/\textbf{0.86} \\
 proposed & \textbf{24.84}/0.80 & \textbf{26.23}/\textbf{0.81}  & 24.3/0.85 & 25.66/0.85 & 27.67/0.85\\
 \bottomrule
\end{tabular}
\label{tbl_psnr}
\end{table}

\subsection{Inpainting} \label{subsec:inpainting}
In the case of inpainting, we aim to reconstruct an image from only a part of its pixels. Formally, given $\mathcal{M}\in \{0,1\}^{N_x\times N_y}$, such that $\mathcal{M}_{i,j}=1$ whenever the image at pixel $(i,j)$ is known and $\mathcal{M}_{i,j}=0$ else, we define
\begin{equation*}
Au = \mathcal{M} \odot u, \quad \Dc_{y} (z) = \Ic_{\{ y \}},
\end{equation*}
where $\odot$ denotes the Hadamard product and $\mathcal{M}$ is initialized such that around 70\% of the entries are zero. Results can be found in \cref{fig_inpainting} (and \cref{fig_inpainting_supplement}, including the \emph{patchtest} and \emph{zebra} images) with corresponding PSNR and SSIM values shown in \cref{tbl_psnr}.
 
Both visually and in terms of PSNR/SSIM, the methods CL, DIP and the proposed method clearly outperform TGV regularization (in particular in texture parts), which is expectable as the latter constitutes a model for piecewise smooth images. Compared to CL, the methods DIP and the proposed also deliver superior results, which is manifested in particular in a further improved reconstruction of texture parts and a smoother result overall. The latter can be attributed to the fact that CL uses a single layer, while DIP and the proposed method use multiple layers. Compared to DIP, the proposed method performs at least equally well, both visually and in terms of PSNR/SSIM, where notable differences are a slightly improved reconstruction of stripe-structures with our method and slightly less artifacts with the DIP prior, for instance, in the mouth region of the \emph{Barbara} image.

In order to show, that our method generalizes well, we also compare on two larger data sets to TGV and DIP. We used images from the ImageNet ILSVRC2017 DET test data set \cite{ILSVRC15}. The data set consists of 5500 images from which we took two different samples. For one sample we picked 100 images at random and for the second sample we handpicked 26 images containing distinctive texture parts. The 26 texture images are not contained in the random sample. We transformed all images to grayscale images and cropped them to a size of 256x256. Again we removed around 70\% of known pixels from the images for the inpainting task. We let all three methods run for 5000 iterations. For the proposed method the parameter $\nu$ was set to $0.9$ for all experiments. The resulting mean PSNR and SSIM scores and corresponding standard deviations with TGV, DIP and the proposed method can be found in \cref{tbl_inpainting_imagenet}. Compared to TGV both DIP and the proposed method perform significantly better in particular on the texture images, for which TGV naturally is a sub-optimal prior. On the texture images, DIP performs slightly better than the proposed method in terms of PSNR scores and comparably well in terms of SSIM scores. On the other hand, on the random sample, the proposed method performs slightly better both in terms of PSNR and SSIM scores. It can also be noted that the standard deviation of PSNR/SSIM values for the proposed method are consistently slightly below the ones obtained with the DIP, indicating an increased stability of our method.
Altogether, this results confirm an at least comparable performance of our method compared to DIP. 
 
\newcommand\fw{2.5cm} 
\newcommand\fwh{1.25cm} 

\newcommand\ffw{3cm} 
\newcommand\ffwh{1.5cm}

\begin{figure}
\begin{subfigure}{\textwidth}
\centering
\begin{subfigure}[t]{\fw}
\includegraphics[width = \fw]{images_gen_reg_inpainting_barbara_crop_corrupted.png}
\end{subfigure}%
\begin{subfigure}[t]{\fw}
\includegraphics[width = \fw]{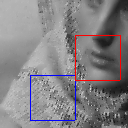}
\includegraphics[width = \fwh]{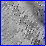}%
\includegraphics[width = \fwh]{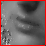}
\centering{$20.43/0.71$}
\end{subfigure}%
\begin{subfigure}[t]{\fw}
\includegraphics[width = \fw]{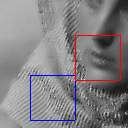}
\includegraphics[width = \fwh]{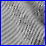}%
\includegraphics[width = \fwh]{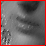}
\centering{$22.05/0.79$}
\end{subfigure}%
\begin{subfigure}[t]{\fw}
\includegraphics[width = \fw]{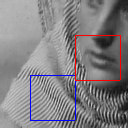}
\includegraphics[width = \fwh]{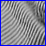}%
\includegraphics[width = \fwh]{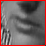}
\centering{25.88/\textbf{0.91}}
\end{subfigure}%
\begin{subfigure}[t]{\fw}
\includegraphics[width = \fw]{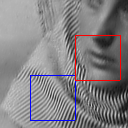}
\includegraphics[width = \fwh]{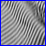}%
\includegraphics[width = \fwh]{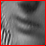}
\centering{\textbf{27.17}/0.9}
\end{subfigure}%
\begin{subfigure}[t]{\fw}
\includegraphics[width = \fw]{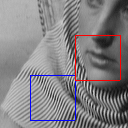}
\includegraphics[width = \fwh]{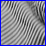}%
\includegraphics[width = \fwh]{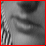}
\end{subfigure}
\end{subfigure}
 
\begin{subfigure}{\textwidth}
\centering
\begin{subfigure}[t]{\fw}
\includegraphics[width = \fw]{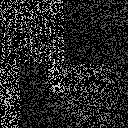}
\end{subfigure}%
\begin{subfigure}[t]{\fw}
\includegraphics[width = \fw]{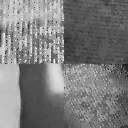}
\centering{$20.07/0.63$}
\end{subfigure}%
\begin{subfigure}[t]{\fw}
\includegraphics[width = \fw]{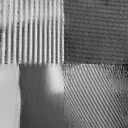}
\centering{$25.93/0.85$}
\end{subfigure}%
\begin{subfigure}[t]{\fw}
\includegraphics[width = \fw]{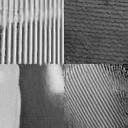}
\centering{\textbf{28.72}/\textbf{0.9}}
\end{subfigure}%
\begin{subfigure}[t]{\fw}
\includegraphics[width = \fw]{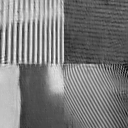}
\centering{28.6/0.89}
\end{subfigure}%
\begin{subfigure}[t]{\fw}
\includegraphics[width = \fw]{images_gen_reg_inpainting_cart_text_mix_original.png}
\centering
\end{subfigure}
\end{subfigure}
 
\begin{subfigure}{\textwidth}
\centering
\begin{subfigure}[t]{\ffw}
\includegraphics[width = \ffw]{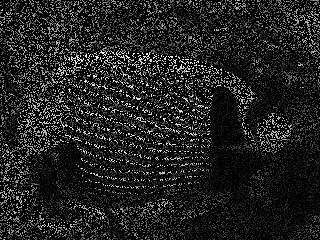}
\end{subfigure}%
\begin{subfigure}[t]{\ffw}
\includegraphics[width = \ffw]{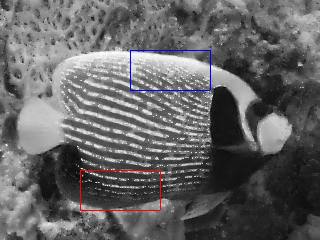}
\includegraphics[width = \ffwh]{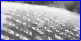}%
\includegraphics[width = \ffwh]{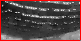}
\centering{$20.73/0.76$}
\end{subfigure}%
\begin{subfigure}[t]{\ffw}
\includegraphics[width = \ffw]{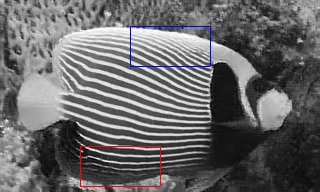}
\includegraphics[width = \ffwh]{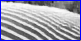}%
\includegraphics[width = \ffwh]{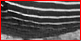}
\centering{\textbf{26.19}/\textbf{0.88}}
\end{subfigure}%
\begin{subfigure}[t]{\ffw}
\includegraphics[width = \ffw]{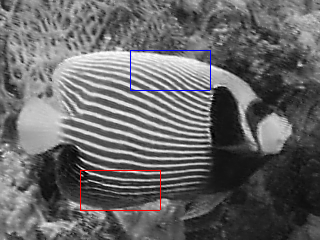}
\includegraphics[width = \ffwh]{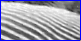}%
\includegraphics[width = \ffwh]{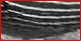}
\centering{25.19/0.87}
\end{subfigure}%
\begin{subfigure}[t]{\ffw}
\includegraphics[width = \ffw]{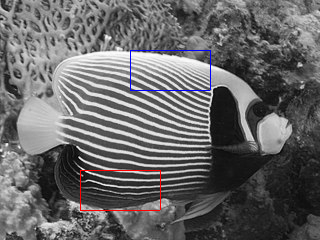}
\includegraphics[width = \ffwh]{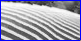}%
\includegraphics[width = \ffwh]{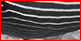}
\end{subfigure}
\end{subfigure}
 
\caption{Inpainting from 30\% known pixels with closeups. First two rows, from left to right: Data, TGV, CL, DIP, proposed, ground truth. Last row, from left to right: Data, TGV, DIP, proposed, ground truth. PSNR/SSIM values below images, bold indicates best result.}
\label{fig_inpainting}
\end{figure}

\begin{table}[h]
\caption[PSNR and SSIM values on data set.]{PSNR and SSIM values for inpainting from 30\% known pixels on subsets of the ImageNet data set. We provide the mean and standard deviation over the data set as \emph{mean} $\pm$ \emph{standard deviation}. Bold indicates the best result.}
\centering
\begin{tabular}{ p{1.5cm} p{3cm} p{2.5cm} p{2.5cm} p{2.5cm} }
\toprule
 & & \emph{TGV} & \emph{DIP} & \emph{proposed} \\
 \midrule
 \textbf{PSNR}
 &	Random images	& $24.58\pm 3.69$		& $25.19\pm 4.08$	& $\textbf{25.27}\pm 3.93$\\
 & Texture images & $22.04\pm 4.08$	& $\textbf{23.94}\pm 4.78$	& $23.71\pm 4.28$\\
 \textbf{SSIM}
 & Random images	& $0.81\pm 0.081$		& $0.83\pm 0.092$		& $\textbf{0.84}\pm 0.078$\\
 & Texture images	& $0.76\pm 0.075$	& $\textbf{0.83}\pm 0.076$		& $\textbf{0.83}\pm 0.063$ \\
 \bottomrule
\end{tabular}
\label{tbl_inpainting_imagenet}
\end{table}

\subsection{Denoising}
In the case of denoising, the forward operator $A$ is set to be the identity and the data fidelity term is given as $\Dc_{y}(z) = \frac{1}{2} \| z-y\|_2^2$, where the data is corrupted with Gaussian noise with mean zero and standard deviation 0.1 times the image range. Results can be found in \cref{fig_denoising} (and \cref{fig_denoising_supplement}, including the \emph{patchtest} and \emph{zebra} images) with corresponding PSNR and SSIM values shown in \cref{tbl_psnr}.
 
Similar as in the inpainting experiment, we observe that the DIP and the proposed method clearly outperform CL and TGV. In terms of PSNR values, the DIP seems slightly superior to the proposed method in this case, while in terms of SSIM values and visually the results seem comparable, again with slightly better results of our method for stripe structures and overall slightly smoother reconstructions obtained with DIP. It is also interesting to note that, in particular in the result for the \emph{Barbara} image, with DIP, CL and the proposed method, some structures of the texture part are also subtly visible in other parts of the image. This can be seen in particular in the smooth top-left region as well as \emph{Barbara's} face. While such artifacts can, in part, certainly be attributed to the rather strong noise level of the data in this case, they also seem related to the overall design of the three approaches.

\renewcommand\fw{2.5cm} 
\renewcommand\fwh{1.25cm} 

\renewcommand\ffw{3cm} 
\renewcommand\ffwh{1.5cm}
 
\begin{figure}
\centering
\begin{subfigure}{\textwidth}
\centering
\begin{subfigure}[t]{\fw}
\includegraphics[width = \fw]{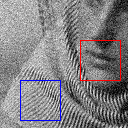}
\includegraphics[width = \fwh]{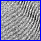}%
\includegraphics[width = \fwh]{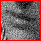}
\centering{$20.09/0.54$}
\end{subfigure}%
\begin{subfigure}[t]{\fw}
\includegraphics[width = \fw]{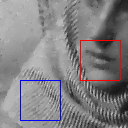}
\includegraphics[width = \fwh]{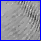}%
\includegraphics[width = \fwh]{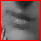}
\centering{$23.64/0.75$}
\end{subfigure}%
\begin{subfigure}[t]{\fw}
\includegraphics[width = \fw]{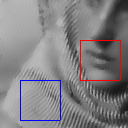}
\includegraphics[width = \fwh]{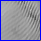}%
\includegraphics[width = \fwh]{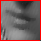}
\centering{$23.24/0.72$}
\end{subfigure}%
\begin{subfigure}[t]{\fw}
\includegraphics[width = \fw]{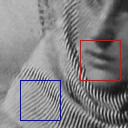}
\includegraphics[width = \fwh]{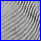}%
\includegraphics[width = \fwh]{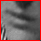}
\centering{\textbf{26.68}/\textbf{0.82}}
\end{subfigure}%
\begin{subfigure}[t]{\fw}
\includegraphics[width = \fw]{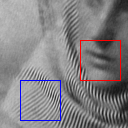}
\includegraphics[width = \fwh]{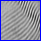}%
\includegraphics[width = \fwh]{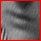}
\centering{$26.21/0.81$}
\end{subfigure}%
\begin{subfigure}[t]{\fw}
\includegraphics[width = \fw]{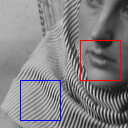}
\includegraphics[width = \fwh]{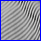}%
\includegraphics[width = \fwh]{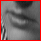}
\end{subfigure}
\end{subfigure}
 
\begin{subfigure}{\textwidth}
\centering
\begin{subfigure}[t]{\fw}
\includegraphics[width = \fw]{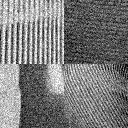}
\centering{$20.13/0.57$}
\end{subfigure}%
\begin{subfigure}[t]{\fw}
\includegraphics[width = \fw]{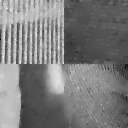}
\centering{$22.88/0.62$}
\end{subfigure}%
\begin{subfigure}[t]{\fw}
\includegraphics[width = \fw]{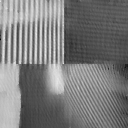}
\centering{$25.41/0.75$}
\end{subfigure}%
\begin{subfigure}[t]{\fw}
\includegraphics[width = \fw]{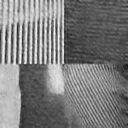}
\centering{$26.21/0.76$}
\end{subfigure}%
\begin{subfigure}[t]{\fw}
\includegraphics[width = \fw]{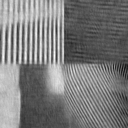}
\centering{\textbf{26.83}/\textbf{0.81}}
\end{subfigure}%
\includegraphics[width = \fw]{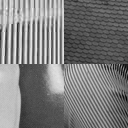}
\end{subfigure}
 
\begin{subfigure}{\textwidth}
\centering
\begin{subfigure}[t]{\ffw}
\includegraphics[width = \ffw]{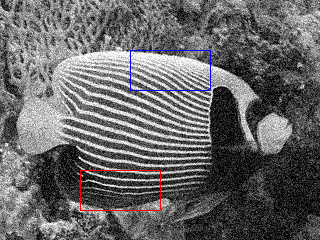}
\includegraphics[width = \ffwh]{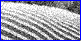}%
\includegraphics[width = \ffwh]{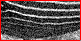}
\centering{$20.18/0.63$}
\end{subfigure}%
\begin{subfigure}[t]{\ffw}
\includegraphics[width = \ffw]{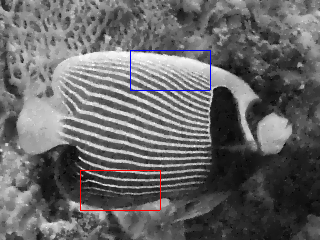}
\includegraphics[width = \ffwh]{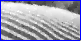}%
\includegraphics[width = \ffwh]{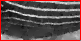}
\centering{$25.06/0.82$}
\end{subfigure}%
\begin{subfigure}[t]{\ffw}
\includegraphics[width = \ffw]{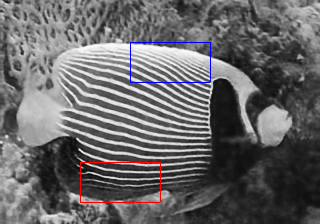}
\includegraphics[width = \ffwh]{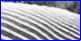}%
\includegraphics[width = \ffwh]{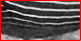}
\centering{\textbf{26.8}/\textbf{0.85}}
\end{subfigure}%
\begin{subfigure}[t]{\ffw}
\includegraphics[width = \ffw]{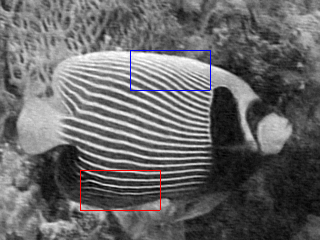}
\includegraphics[width = \ffwh]{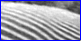}%
\includegraphics[width = \ffwh]{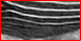}
\centering{25.52/0.82}
\end{subfigure}%
\begin{subfigure}[t]{\ffw}
\includegraphics[width = \ffw]{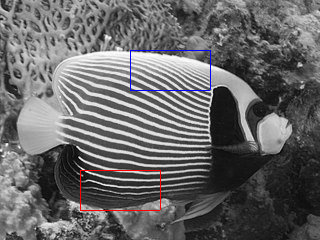}
\includegraphics[width = \ffwh]{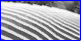}%
\includegraphics[width = \ffwh]{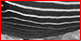}
\end{subfigure}
\end{subfigure}
 
\caption{Denoising with Gaussian noise with mean zero and standard deviation 0.1 times image range. First two rows, from left to right: Data, TGV, CL, DIP, proposed, ground truth. Last two row, from left to right: Data, TGV, DIP, proposed, ground truth. PSNR/SSIM values below images, bold indicates best result.}
\label{fig_denoising}
\end{figure}
 
\begin{figure}
\begin{subfigure}[b]{0.55\textwidth}
\centering
\includegraphics[height = 8.5cm]{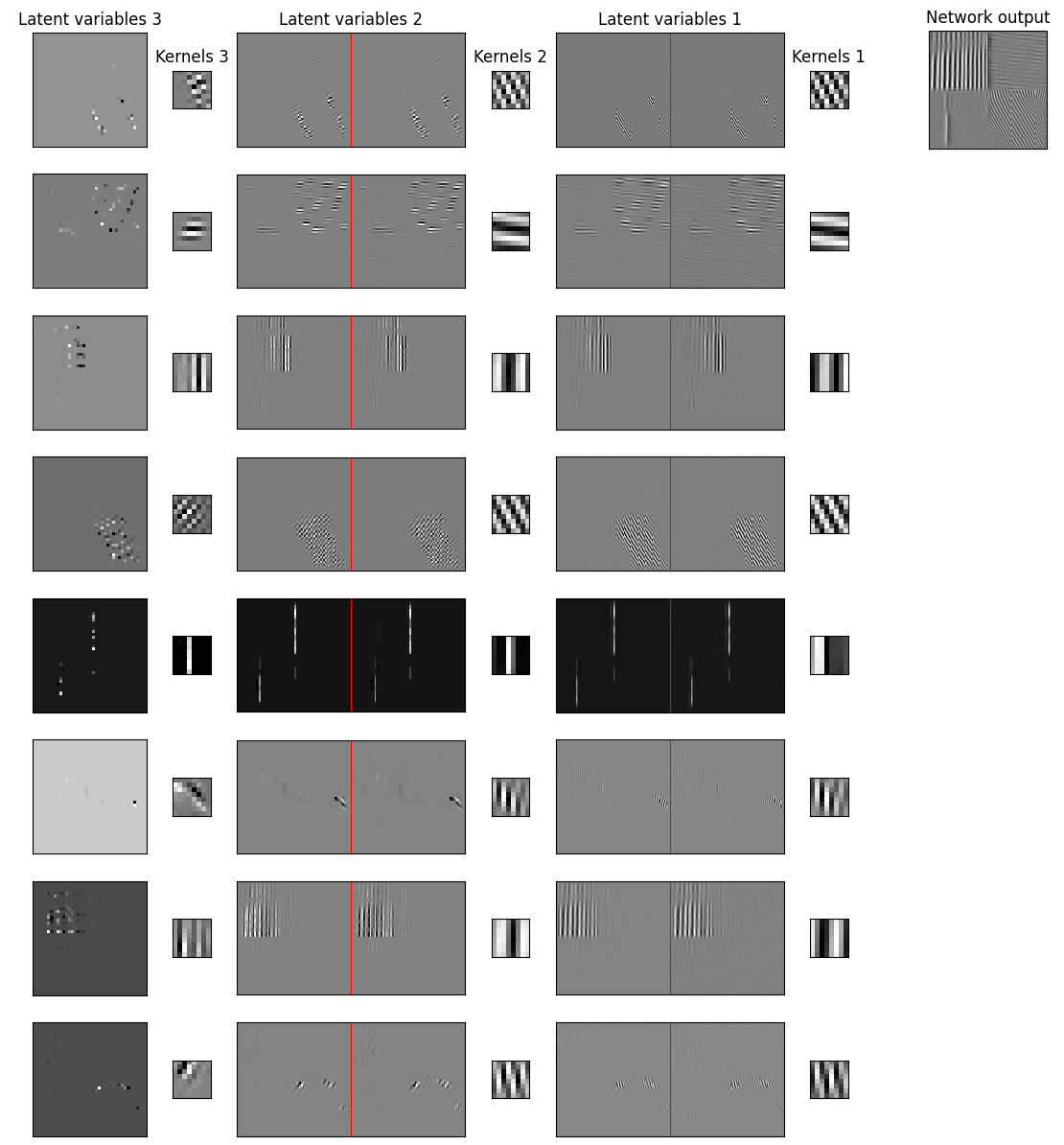}
\caption{\emph{Mix} network. The intermediate layers show the output of the previous convolution $\mu_n^{l+1}*_\sigma\theta_n^{l+1}$ (left) and the latent variable of the next convolution $\mu_n^l$ (right) next to each other, separated by a red line (recall that the discrepancy of these two is penalized in our objective functional by the functional $\Jc_l$). \label{fig_network_decomposition}}
\end{subfigure}%
\hspace*{0.3cm}
\begin{subfigure}[b]{0.4\textwidth}
\begin{subfigure}{\textwidth}
\centering
\includegraphics[height = 3cm]{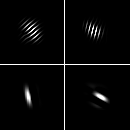}
\caption*{Four \emph{Barbara} samples.}
\end{subfigure}
\begin{subfigure}{\textwidth}
\centering
\includegraphics[height = 3cm]{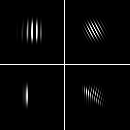}
\caption*{Four \emph{mix} samples.}
\end{subfigure}
\caption{Network samples (images are cropped):  The images $v$ where obtained via $v=\mu^1 *_1 \theta^1$ with $\mu^{l-1} = \mu^l*_l\theta^l$ for $l>1$, with $\theta$ obtained from denoising. The $\mu^L$ were choosen as delta peaks in different latent variables $\mu^L_n$.\label{fig_network_output}}
\end{subfigure}
\caption{Convolutional networks obtained from denoising. Left: Full network after denoising. Right: Samples from obtained networks with $\mu^L$ delta peaks at different latent variables.}
\label{fig_network}
\end{figure}

\cref{fig_network_decomposition} shows the generative network, consisting of filter kernels, latent variables and network output, that was obtained while denoising the \emph{mix} image with the proposed approach. We can observe that different latent variables and filter kernels clearly correspond to different features of the image, and that the main features of the image are captured rather well by the model. This is remarkable in particular in view of the fact that no training was used and that the network can only rely on noisy data to learn these features. 
 
We can also observe, in particular when considering the latent variables $\mu_2^{1}$ and $\mu_2^2*_\sigma\theta_2^2$ in the second row of the first layer of the network, that the output of the previous layer does not always coincide exactly with the latent variable that is passed forward to the next layer. This is a result of using $\Jc_l = \frac{\gamma}{2} \| \cdot \|_2 ^2 $ rather than $\Jc_l = \Ic_{\{0\}}$ in our numerical realization. It is worth noting that the difference of the two, however, mostly corresponds to noise-like artifacts. This indicates that these noise-like artifacts are difficult to generate from consecutive convolutions, due to the smoothing properties of the convolution (see \cref{fig_smoothing_convoution}). It also suggests that, by allowing for $\Jc_l = \Ic_{\{0\}}$ with an improved algorithm, our results might further be improved.
 
In \cref{fig_network_output}, we draw samples from two different learned networks, where the kernels were learned from denoising the \emph{Barbara} and \emph{mix} images.
 
\subsection{Deconvolution}
Given a convolution kernel $k\in\mathbb{R}^{(2s+1)\times (2s+1)}$, we define the forward operator as $A:\mathbb{R}^{N_x\times N_y}\rightarrow\mathbb{R}^{N_x\times N_y}$
\[ (Au)_{i,j} = \sum\limits_{i'=-s}^s \sum\limits_{j'=-s}^s k_{s+1+i',s+1+j'} u_{i-i',j-j'},\]
where we set $u_{i,j}=0$, whenever $(i,j)\notin \{1,2,...,N_x\}\times\{1,2,...,N_y\}$. The data discrepancy is given as $\Dc_{y}(z) = \frac{1}{2} \| z-y \|_2^2$. We use a Gaussian convolution kernel, with size $9\times 9$ for the \emph{Barbara} and \emph{mix} images, $13\times 13$ for the \emph{fish} image, and $15\times 15$ for the \emph{zebra} image, and a standard deviation of 0.25. The data is corrupted with Gaussian noise with standard deviation 0.025 times the image range and mean zero. 
The results of the deconvolution experiment are shown in \cref{fig_deconv} (and \cref{fig_deconv_supplement}, including the \emph{zebra} image) with PSNR and SSIM values shown in \cref{tbl_psnr}.
 
We can observe that, for the \emph{Barbara} and \emph{mix} image, the results with the proposed method are clearly superior to CL and TGV in terms of PSNR/SSIM values and also visually the results are superior in particular for the texture parts. For the \emph{fish} image the improvement is only minor both in terms of PSNR and visually. One reason for this might be the rather different size of this image and the fact that we do not adapt our network architecture between different experiments. This might be further improved by automatically adapting, e.g., the filter-kernel size to the image dimension, which is something we will consider in the course of developing improved overall algorithms for the proposed method within the scope of future work.

\renewcommand\fw{3cm} 
\renewcommand\fwh{1.5cm} 

\renewcommand\ffw{3.75cm} 
\renewcommand\ffwh{1.875cm}

\begin{figure}
\centering
\begin{subfigure}{\textwidth}
\centering
\begin{subfigure}[t]{\fw}
\includegraphics[width = \fw]{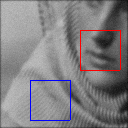}
\includegraphics[width = \fwh]{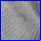}%
\includegraphics[width = \fwh]{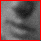}
\centering{$20.55/0.62$}
\end{subfigure}%
\begin{subfigure}[t]{\fw}
\includegraphics[width = \fw]{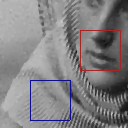}
\includegraphics[width = \fwh]{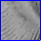}%
\includegraphics[width = \fwh]{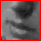}
\centering{22.47/0.73}
\end{subfigure}%
\begin{subfigure}[t]{\fw}
\includegraphics[width = \fw]{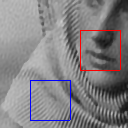}
\includegraphics[width = \fwh]{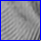}%
\includegraphics[width = \fwh]{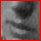}
\centering{22.65/0.73}
\end{subfigure}%
\begin{subfigure}[t]{\fw}
\includegraphics[width = \fw]{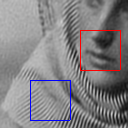}
\includegraphics[width = \fwh]{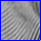}%
\includegraphics[width = \fwh]{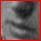}
\centering{\textbf{23.96}/\textbf{0.79}}
\end{subfigure}%
\begin{subfigure}[t]{\fw}
\includegraphics[width = \fw]{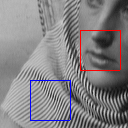}
\includegraphics[width = \fwh]{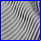}%
\includegraphics[width = \fwh]{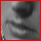}
\end{subfigure}
\end{subfigure}
 
\begin{subfigure}{\textwidth}
\centering
\begin{subfigure}[t]{\fw}
\includegraphics[width = \fw]{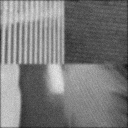}
\centering{$21.23/0.57$}
\end{subfigure}%
\begin{subfigure}[t]{\fw}
\includegraphics[width = \fw]{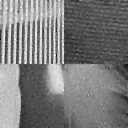}
\centering{23.56/0.70}
\end{subfigure}%
\begin{subfigure}[t]{\fw}
\includegraphics[width = \fw]{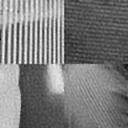}
\centering{24.46/0.72}
\end{subfigure}%
\begin{subfigure}[t]{\fw}
\includegraphics[width = \fw]{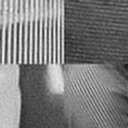}
\centering{\textbf{25.28}/\textbf{0.77}}
\end{subfigure}%
\begin{subfigure}[t]{\fw}
\includegraphics[width = \fw]{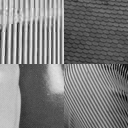}
\end{subfigure}%
\end{subfigure}
 
\begin{subfigure}{\textwidth}
\centering
\begin{subfigure}[t]{\ffw}
\includegraphics[width = \ffw]{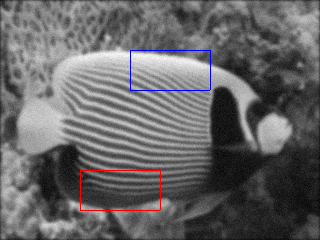}
\includegraphics[width = \ffwh]{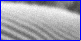}%
\includegraphics[width = \ffwh]{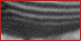}
\centering{$20.34/0.64$}
\end{subfigure}%
\begin{subfigure}[t]{\ffw}
\includegraphics[width = \ffw]{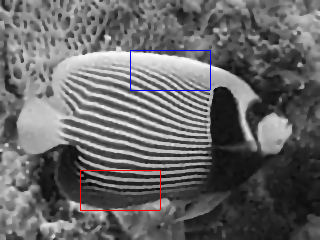}
\includegraphics[width = \ffwh]{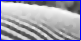}%
\includegraphics[width = \ffwh]{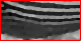}
\centering{23.73/0.80}
\end{subfigure}%
\begin{subfigure}[t]{\ffw}
\includegraphics[width = \ffw]{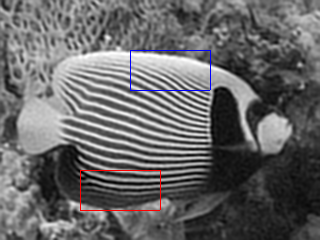}
\includegraphics[width = \ffwh]{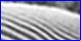}%
\includegraphics[width = \ffwh]{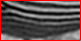}
\centering{\textbf{23.77}/\textbf{0.81}}
\end{subfigure}%
\begin{subfigure}[t]{\ffw}
\includegraphics[width = \ffw]{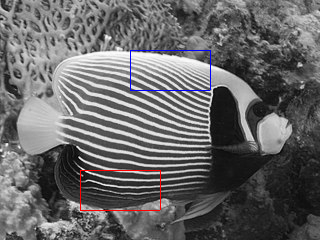}
\includegraphics[width = \ffwh]{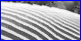}%
\includegraphics[width = \ffwh]{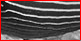}
\end{subfigure}
\end{subfigure}
 
\caption{Deconvolution. First two rows from left to right: Data, TGV, CL, proposed, ground truth. Last row from left to right: Data, TGV, proposed, ground truth. PSNR/SSIM values below images, bold indicates best result.}
\label{fig_deconv}
\end{figure}

\subsection{JPEG decompression} \label{subsec:jpeg}
The last experiment we consider is JPEG decompression. Here, the
forward operator $A$ is a block-cosine transform, while $\Dc_{y} = \Ic_{D(y)}$, with $D(y)$ containing all cosine coefficients which, when quantized with the given compression rate, would result in the same coefficients as stored in the given JPEG file $y$. For details we refer to \Cref{sm:sec_jpeg} and \cite{holler12tvjpeg_mh}.
The results for JPEG decompression can be found in \cref{fig_jpeg} (and \cref{fig_jpeg_supplement}, including the \emph{patchtest} image), and again the corresponding PSNR and SSIM values are provided in \cref{tbl_psnr}.
 
For JPEG decompression, it can be observed that the proposed method is rather clearly superior to DIP visually. In particular, JPEG compression artifacts are still visible in the DIP result, but not the results with the proposed method. With respect to PSNR/SSIM values DIP seems to perform slightly better. One should be aware, however, that both these discrepancies in performance might be a result of the different data fidelities used in DIP and our model, see \Cref{sm:sec_jpeg} for details.

In \cref{fig_jpeg_decomposition} one can observe the implicit decomposition of the images into two parts, one being generated from our network and denoted $v$ and the other one being penalized by $\Rc$ denoted $u-v$. The shown decompositions are obtained from the experiments of \cref{fig_jpeg}. One can observe that $v$ contains mostly texture and finer patterns of the image whereas $u-v$ captures piecewise smooth and/or constant regions. For instance for the \emph{Barbara} image we can clearly see that most of the face is contained in $u-v$ and the texture of the dress is mostly contained in $v$. As for the \emph{mix} image, a similar decomposition can be observed in that the textures are contained in $v$ and piecewise smooth/constant areas in $u-v$.

\renewcommand\fw{3cm} 
\renewcommand\fwh{1.5cm} 

\renewcommand\ffw{3.75cm} 
\renewcommand\ffwh{1.875cm}

\begin{figure}
\centering
\begin{subfigure}{\textwidth}
\centering
\begin{subfigure}[t]{\fw}
\includegraphics[width = \fw]{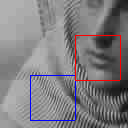}
\includegraphics[width = \fwh]{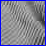}%
\includegraphics[width = \fwh]{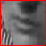}
\centering{23.49/0.75}
\end{subfigure}%
\begin{subfigure}[t]{\fw}
\includegraphics[width = \fw]{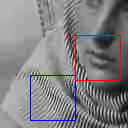}
\includegraphics[width = \fwh]{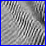}%
\includegraphics[width = \fwh]{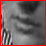}
\centering{24.36/\textbf{0.81}}
\end{subfigure}%
\begin{subfigure}[t]{\fw}
\includegraphics[width = \fw]{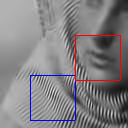}
\includegraphics[width = \fwh]{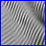}%
\includegraphics[width = \fwh]{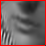}
\centering{\textbf{24.84}/0.80}
\end{subfigure}%
\begin{subfigure}[t]{\fw}
\includegraphics[width = \fw]{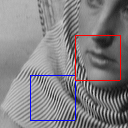}
\includegraphics[width = \fwh]{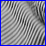}%
\includegraphics[width = \fwh]{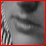}
\end{subfigure}
\end{subfigure}
\begin{subfigure}{\textwidth}
\centering
\begin{subfigure}[t]{\fw}
\includegraphics[width = \fw]{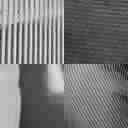}
\centering{25.53/0.77}
\end{subfigure}
\begin{subfigure}[t]{\fw}
\includegraphics[width = \fw]{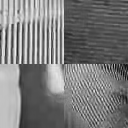}
\centering{25.19/\textbf{0.81}}
\end{subfigure}
\begin{subfigure}[t]{\fw}
\includegraphics[width = \fw]{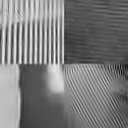}
\centering{\textbf{26.23}/\textbf{0.81}}
\end{subfigure}
\includegraphics[width = \fw]{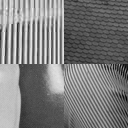}
\end{subfigure}
\caption{JPEG decompression. From left to right: Data, DIP, proposed, ground truth.}
\label{fig_jpeg}
\end{figure}

\begin{figure}
\centering
\begin{subfigure}[t]{\textwidth}
\centering
\includegraphics[width = \fw]{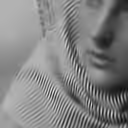}%
\includegraphics[width = \fw]{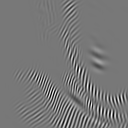}%
\includegraphics[width = \fw]{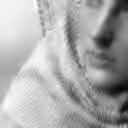}
\end{subfigure}

\begin{subfigure}{\textwidth}
\centering
\includegraphics[width = \fw]{images_gen_reg_jpeg_cart_text_mix_recon.png}%
\includegraphics[width = \fw]{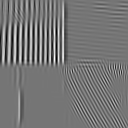}%
\includegraphics[width = \fw]{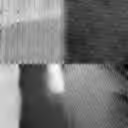}
\end{subfigure}
\caption{Image decomposition for JPEG decompression. From left to right: Reconstructed image $u$, generative part $v$, remaining part $u-v$.}
\label{fig_jpeg_decomposition}
\end{figure}

\section{Discussion}
We have introduced and investigated a novel generative variational regularization method for inverse problems in imaging. The method utilizes an energy-based prior whose architecture is inspired by the one of generative neural networks. We have proven theoretical results concerning existence/stability of solutions and convergence for vanishing noise. Moreover, we have shown that the proposed prior always generates continuous functions as outputs, which analytically confirms its efficacy for removing noise-like artifacts and sheds light on different numerical techniques and heuristics used in a large class of existing works related to the deep image prior \cite{deep_image_prior}.

For a discretized version of the proposed method, we have derived a numerical solution algorithm that allows a direct application to a rather general class of inverse problems in imaging. 
Experimentally, we have shown that our method outperforms classical variational methods and competes with recent state of the art deep learning methods such as the DIP, while having the advantage of a significantly reduced number of parameters and of a solid mathematical background.
 
Our next research goals are to further improve the proposed class of generative variational priors both in terms of analytic understanding, e.g., with respect to the structure of solutions, and in terms of numerical algorithms, e.g., to ensure general, resolution-independent applicability without manual parameter selection. In particular, we aim at developing convex relaxations of the proposed prior in the spirit of \cite{Chambolle2020_git}.

\printbibliography

\makeatletter\@input{xx2.tex}\makeatother

\end{document}


%
\newrefcontext[labelprefix=SM]
\maketitle

\section{Additional analytic results and proofs}\label{supplement_proofs}
This section provides additional proofs that were left out in the main part of this work. In particular, we first prove \cref{lem_conv_weak*_cont}, which is a result on weak* continuity of the convolution, and then consider a result on convergence for vanishing noise. 

For convenience of the reader, we repeat the statement of the lemma before proving it.

\begin{lemma}[Sequential weak*-continuity of the convolution]
Take $q \in (1,2]$ arbitrary and let $(g_m)_m$, $ g$ in $ L^2(\Sigma)$ and $(\mu_m)_m $, $\mu $ in $\mathcal{M}(\Omega_\Sigma)$ be such that $g_m\rightharpoonup g$  and $\mu_m \xrightharpoonup{*} \mu$ as $m \rightarrow\infty $. Then, it follows that
\[
\mu_m*g_m\rightharpoonup\mu*g \text{ in }L^q(\Omega) \text{ as }m \rightarrow \infty .\]
\end{lemma}
\begin{proof}
%
%
We fix $\phi \in C_c(\Omega)$ and proceed in three steps: First we show that a subsequence of $(\langle\mu_m*g_m,\phi\rangle )_m$ converges to $\langle\mu*g,\phi\rangle$, then we extend this to the entire sequence and lastly, via density, we conclude weak* convergence as claimed.

For $g \in L^2(\Sigma)$, define $F_g: \R^d \rightarrow \R$ as 
\[F_g(y) = \int\limits_{\Sigma} \tilde{\phi}(x+y) g(x) \; dx. \]
We  show that $F_g \in C_c(\Omega_\Sigma)$. Defining $K=\supp(\phi) \subset \Omega$, first note that $\supp(F_g) \subset K  - \overline{\Sigma}$, which is compact, hence, to ensure $\supp(F_g) \subset \Omega_\Sigma$, it suffices to show that $K - \overline{\Sigma} \subset \Omega_\Sigma$.
For the latter, note that for any $y=z-x\in K-\overline{\Sigma}$ with $z\in K$ and $x\in\overline{\Sigma}$, we can select $\delta >0 $ such that $\{z'\st |z-z'|<\delta\} \subset \Omega$.

Moreover, we can select $\tilde{x}\in \Sigma$, such that $|x-\tilde{x}|<\delta$, which implies  $ y=z-x=z +(\tilde{x}-x) - \tilde{x} \in \Omega - \Sigma=\Omega_\Sigma$ and hence $\supp(F_g) \subset \Omega_\Sigma$. Continuity of $F_g$ follows easily from the Lebesgue dominated convergence theorem, hence $F_g \in C_c(\Omega_\Sigma)$.
Using this, weak convergence of $(g_m)_m$ to $g$ in $L^2(\Sigma)$ implies that $F_{g_m} (y) \rightarrow F_g(y)$ for any $y \in \Omega_\Sigma$ as $m\rightarrow \infty$, where both $F_{g_m} $ for each $m$ and $F_g $ are contained in $C(\overline{\Omega_\Sigma})$. Using the Arzelà–Ascoli theorem, we now want to establish uniform convergence of a subsequence of $(F_{g_m})_m$, for which we show uniform boundedness and equi-continuity of $(F_{g_m})_m$. The former holds true, since, for  $y\in \overline{\Omega_\Sigma}$ arbitrary, 
\begin{equation*}
|F_{g_m}(y)| \leq \int\limits_{\Sigma} |\tilde{\phi}(x+y)| |g_m(x)| \; dx \underbrace{\leq}_\text{Hölder} \| \phi \|_2 \| g_m \|_2,
\end{equation*}
with the right hand side being uniformly bounded due to convergence of $(g_m)_m$ to $g$ in $L^2(\Sigma)$.
 
Regarding equi-continuity, let $\delta>0$ and $y_1,y_2\in \overline{\Omega_\Sigma}$, such that $|y_1-y_2|<\delta$. Then
\begin{align*}
|F_{g_m}(y_1)- F_{g_m}(y_2)| 
& \leq \int\limits_{\Sigma}|\tilde{\phi}(x+y_1)-\tilde{\phi}(x+y_2)|\;|g_m(x)|\;dx \\ 
& \leq \sup\limits_{\substack{z_1, z_2\in \Omega: \\ |z_1-z_2|<\delta}}|\phi(z_1)-\phi(z_2)| \; \| g_m\|_1 
\leq \sup\limits_{\substack{z_1, z_2\in \Omega: \\ |z_1-z_2|<\delta}}|\phi(z_1)-\phi(z_2)| \; C \| g_m\|_2 \rightarrow 0
\end{align*}
as $\delta\rightarrow 0$ uniformly for all $m$, since $\phi$ is uniformly continuous as a compactly supported, continuous function. With this, the Arzelà–Ascoli theorem yields a uniformly convergent subsequence $(F_{g_{m_k}})_k$, with the limit being the same as the pointwise limit $F_g$. Then the weak* convergence of $(\mu_m)_m$ implies
 
\begin{align*}
\left|\langle \mu_{m_k}*g_{m_k}-\mu*g, \phi\rangle\right| 
& \leq  \left|\langle \mu_{m_k}*g_{m_k}-\mu_{m_k}*g, \phi\rangle\right| + \left|\langle \mu_{m_k}*g-\mu*g, \phi\rangle\right| \\
& =  \bigg | \int\limits_{\Omega_\Sigma} F_{g_{m_k}}(y) - F_g(y) \; d\mu_{m_k}(y) \bigg| +  \bigg| \int\limits_{\Omega_\Sigma} F_g(y)\; d\mu_{m_k}(y)-\int\limits_{\Omega_\Sigma} F_g(y)\; d\mu(y)\bigg| \\
& \leq \underbrace{\| F_{g_{m_k}}-F_g \|_\infty\; \| \mu_{m_k} \|_\mathcal{M}}_{(a)} + \underbrace{\bigg| \int\limits_{\Omega_\Sigma} F_g(y)\; d\mu_{m_k}(y)-\int\limits_{\Omega_\Sigma} F_g(y)\; d\mu(y)\bigg|}_{(b)} \rightarrow 0,
\end{align*}
where $(a)$ goes to zero by the uniform convergence of $(F_{g_{m_k}})_k$ and boundedness of $(\mu_{m_k})_k$ and $(b)$ goes to zero by the weak* convergence of $(\mu_{m_k})_k$.
 
Assume now that this convergence does not hold true for the entire sequence, i.e., there exists $\epsilon>0$ and a subsequence $m_k$, such that for all $k$, $ \left|\langle\mu_{m_k}*g_{m_k}-\mu*g, \phi\rangle\right|>\epsilon$.
Then by the same arguments as above, we could extract a further subsequence with indices $m_{k_l}$, such that
$ \left|\langle\mu_{m_{k_l}}*g_{m_{k_l}}-\mu*g, \phi\rangle \right|\rightarrow 0 $
as $l\rightarrow\infty$, which is a contradiction. Hence, $\langle \mu_m*g_m, \phi \rangle \rightarrow \langle \mu*g, \phi \rangle$ as $m\rightarrow\infty$ for all $\phi\in C_c(\Omega)$.
This means that $(\mu_m \conv g_m)_m$ converges weakly on the dense subset $C_c(\Omega)$ of $L^{q'}(\Omega)$, which, since $\| \mu_m \conv g_m\|_q$ is bounded by boundedness of both $(\mu_m)_m$ and $(g_m)_m$, implies weak convergence by a standard argument, hence the proof is complete.
\end{proof}

Now we consider the convergence of solutions of \eqref{eq_cont_problem} in the main paper as the data converges to the ground truth data and the regularization parameter $\lambda\rightarrow\infty$ (the parameter $\nu$ is fixed).
 
For this, we use the notion of regularization minimizing solution of $Au=y^\dagger$, with $y^\dagger \in Y$ regarded as ground truth data, for any $(u,v)\in L^q(\Omega)^2$ solving
\[ (u,v) \in \argmin_{\tilde{u},\tilde{v} \in L^q(\Omega)} \Rc(\tilde{u}-\tilde{v}) + \Gc (\tilde{v}) \quad\text{subject to: } A\tilde{u}  = y^\dagger . 
\]
Note that, with the same arguments as in the proof of \cref{thm_existence}, existence of a regularization minimizing solution of $Au=y^\dagger$ can be ensured whenever there are $u,v \in L^q(\Omega)$ such that $Au = y^\dagger$ and $\Rc(u-v) + \Gc (v)< \infty$.

\begin{theorem}[Convergence for vanishing noise]\label{SM_thm_convergence}
Let $y^\dagger \in Y$ be some ground-truth data such that there exist $u,v \in L^q(\Omega)$ with $Au=y^\dagger$ and $\Rc(u-v) + \Gc(v)< \infty$. For a sequence of noise levels $(\delta_m)_m$ in $(0,\infty)$ such that $\lim_{m \rightarrow \infty} \delta_m = 0$, let $y_m \in Y$ be noisy data such that $\Dc_{y_m}(y^\dagger) < \delta_m$. 
 
Suppose that the data fidelity functionals $(\mathcal{D}_{y_m})_m$ are equi-coercive, converge to $\mathcal{D}_{y^\dagger}$ in the sense of \eqref{defin_data_conv} as $m \rightarrow \infty$ and that $\mathcal{D}_{y^\dagger}(z)=0$ if and only if $z=y^\dagger$. Choose the parameters $(\lambda_m)_m$ in $(0,\infty)$ such that 
\[\lambda_m\rightarrow\infty, \quad \lambda_m\delta_m\rightarrow 0 \quad \text{as } m\rightarrow \infty.\]
Then, with $(u_m,v_m)$ a solution to \eqref{eq_cont_problem} with data $y_m$ and parameter $\lambda_m$, up to shifts of $u_m$ in $U\cap\ker(A)$, $(u_m,v_m)_m$ admits a weak accumulation point in $L^q(\Omega)^2$. 
Further, each such accumulation point is a regularization minimizing solution of $Au=y^\dagger$. Moreover, we have 
that $\lim_{m \rightarrow \infty} \Dc_{y_m}(Au_m) = 0$ and $\lim_{m\rightarrow \infty} s_{\Rc}(\nu) \Rc(u_{m} - v_{m}) + s_{\Gc}(\nu) \Gc(v_m) =s_{\Rc}(\nu) \Rc(u^\dagger - v^\dagger) + s_{\Gc}(\nu) \Gc(v^\dagger)$, with $(u^\dagger,v^\dagger)$ a regularization minimizing solution of $Au=y^\dagger$.
\end{theorem}
\begin{proof}
Optimality of $(u_m,v_m)$ compared to a regularization minimizing solution $(u^\dagger,v^\dagger)$ yields
\begin{equation} \label{eq:convergence_noise_estimate}
\lambda_m \Dc_{y_m}(Au_m)+ s_{\Rc}(\nu) \Rc(u_m - v_m) + s_{\Gc}(\nu) \Gc(v_m) \leq \lambda_m \delta_m + s_{\Rc}(\nu) \Rc(u^\dagger - v^\dagger) + s_{\Gc}(\nu) \Gc(v^\dagger).
\end{equation}
Since $\lambda_m \rightarrow \infty$ as $m\rightarrow \infty$, this implies that $\Dc_{y_m}(Au_m) \rightarrow 0$. Further, since $\lambda_m \delta_m \rightarrow 0$, it follows that $((\Rc(u_m - v_m) , \Gc(v_m)))_m$ is bounded. Hence, again, as in \cref{thm_stability}, up to shifts in $U \cap \ker(A)$, we can assume that $((u_m,v_m))_m$ admits a weak accumulation point in  $ L^q(\Omega)^2$. 
 
Now let $(u,v)$ be any such accumulation point and $(u_{m_k},v_{m_k})_k$ weakly converging to $(u,v)$. By convergence of $(\Dc_{y_m})_m$ and weak-to-weak continuity of $A$, we obtain \[ \Dc_{y^\dagger}(Au) \leq \liminf_{k \rightarrow \infty}\Dc_{y_{m_k}}(Au_{m_k}) = 0 ,\] such that $A u = y^\dagger$.  Further, by \eqref{eq:convergence_noise_estimate} and weak lower semicontinuity of $\Rc$ and $\Gc$, we obtain that $s_{\Rc}(\nu) \Rc(u - v) + s_{\Gc}(\nu) \Gc(v) \leq s_{\Rc}(\nu)\Rc(u^\dagger - v^\dagger) + s_{\Gc}(\nu) \Gc(v^\dagger) $, such that $(u,v)$ is a regularization minimizing solution.
 
Finally, by \eqref{eq:convergence_noise_estimate}, since each subsequence of $((u_m,v_m))_m$ admits another (convergent) subsequence $((u_{m_k},v_{m_k}))_k$ such that $\lim_{k\rightarrow \infty} s_{\Rc}(\nu) \Rc(u_{m_k} - v_{m_k}) + s_{\Gc}(\nu) \Gc(v_{m_k}) = \Rc(u^\dagger - v^\dagger) + s_{\Gc}(\nu) \Gc(v^\dagger)$, it follows that
$ \lim_{m\rightarrow \infty} s_{\Rc}(\nu) \Rc(u_{m} - v_{m}) + s_{\Gc}(\nu) \Gc(v_m) =s_{\Rc}(\nu) \Rc(u^\dagger - v^\dagger) + s_{\Gc}(\nu) \Gc(v^\dagger)$, concluding the proof.
\end{proof}

\section{The Discrete Model}\label{supplement_discrete_model}
This section deals with the discretization of the proposed variational energy.
Recall that, in the main paper, the numerical results were obtained with a discretized version of the energy \eqref{eq_cont_problem}, where the following architecture and functionals were used:
\begin{enumerate}[i)]
\item $L = 3$ and $N_l = 8$ for each $l = 1,2,3$. 
\item $E_l = \{ (n,n)\st n \in \{1,\ldots,8\}\}$ for $l=2,3$ and $E_1 = \{(n,1) \st n \in \{1,\ldots,8\}\}$. 
\item $\Rc = J^{**}$, where, for $\epsilon>0$, $J: L^1(\Omega)\rightarrow [0,\infty]$ is given as
\begin{equation}\label{eq_application_cartoon_prior_supp}
J(u) =  \begin{cases}
\frac{1}{|\Omega |}\int\limits_{\Omega}\sqrt{|\nabla u|^2 + \epsilon}\; dx \quad &\text{if } u\in W^{1,1}(\Omega),\\
\infty \quad &\text{else.}
\end{cases}
\end{equation}
\item For $l=1,2$, as detailed in \cref{ex_data_reg}, 
\[\Jc_l(\mu) = \begin{cases}
\frac{\gamma}{2}\| \; f \; \|_2^2\quad &\text{if } \mu = f \in L^2(\Omega^l)^{N_l},\\
\infty\quad &\text{ else.} \end{cases}\]
\end{enumerate}
A summary of all involved parameter can be found in \cref{table_parameters_general}.

In order to elaborate on the discretization in more detail, we start by introducing some notation. For the sake of simplicity, we deal with grayscale images and represent discrete images by matrices $u \in \R^{N_x \times N_y}$ with $N_x,N_y\in\mathbb{N}$. For $1\leq p<\infty$ and discrete variables of arbitrary size $n,m \in \N$, we define the norms
\begin{equation}
\begin{aligned}
\|\;.\;\|_p: \mathbb{R}^{n\times m} &\rightarrow [0,\infty)\\
\|u\|_p &= \left( \frac{1}{n m} \sum\limits_{i=1}^{n} \sum\limits_{j=1}^{m} |
u_{i,j}|^p\right)^\frac{1}{p}.
\end{aligned}
\end{equation}
The division by the number of entries is added in order to make the model parameters independent of the image size and corresponds to a proper discretization of $\| u \|_{L^p(\Omega)} = (\frac{1}{|\Omega|}\int_\Omega |u|^p\;dx)^{\frac{1}{p}}$. For discrete latent variables $\mu\in\mathbb{R}^{M_x \times M_y}$ and filter kernels $\theta\in\mathbb{R}^{r \times r}$, such that $r\leq \min\{M_x,M_y\}$, we define the discrete convolution $\mu*\theta\in\mathbb{R}^{(M_x-r+1) \times (M_y-r+1)}$ as 
\begin{equation*}
\begin{gathered}
\left(\mu * \theta\right)_{n,m} \coloneqq \sum_{i,j=1}^r \mu_{n+r-i,m+r-j}\theta_{i,j}
\end{gathered}
\end{equation*}
for $n=1, 2, ..., M_x-r+1$ and $m = 1, 2, ..., M_y-r+1$. Further, for a stride $\sigma\in\mathbb{N}$ and $\tilde{M_x}, \tilde{M_y}\in\mathbb{N}$, such that $\sigma(M_x-1)+1\leq \tilde{M}_x \leq \sigma M_x$ and $\sigma(M_y-1)+1\leq \tilde{M}_y \leq \sigma M_y$, we define the strided upconvolution $\mu*_\sigma\theta$ as 
\begin{equation}\label{eq:upconvolution}
\begin{gathered}
\mu *_\sigma \theta \coloneqq \tilde{\mu}^\sigma * \theta,
\end{gathered}
\end{equation}
where $\tilde{\mu}^\sigma\in\mathbb{R}^{\tilde{M}_x \times \tilde{M}_y}$ is a zero interpolation defined as
\begin{equation*}
\begin{gathered}
\tilde{\mu}^\sigma_{i,j} \coloneqq 
\begin{cases}
\mu_{k,l} & \text{if } \exists k, l:\; i=\sigma (k-1) +1, j=\sigma  (l-1) +1, \\
0 & \text{else.}
\end{cases}
\end{gathered}
\end{equation*}
The strided upconvolution realizes an increase of resolution of the latent variables within the convolutions between different layers and thereby reduces dimensionality of the network. Further, we define the discrete gradient as
\begin{equation*}
\begin{gathered}
\nabla : \mathbb{R}^{N_x\times N_y} \rightarrow \mathbb{R}^{N_x\times N_y \times 2}\\
u \mapsto \nabla u,
\end{gathered}
\end{equation*}
with
\begin{equation*}
(\nabla u)_{i,j}^1\coloneqq
\begin{cases}
u_{i+1,j}-u_{i,j} & \text{for } i<N_x,\\
0 & \text{for } i=N_x,
\end{cases} \quad 
(\nabla u)_{i,j}^2\coloneqq
\begin{cases}
u_{i,j+1}-u_{i,j} & \text{for } j<N_y,\\
0 & \text{for } j=N_y.
\end{cases}
\end{equation*}
We introduce ${\TV}_\epsilon$ as a discretization of \eqref{eq_application_cartoon_prior_supp} via
\begin{equation}
  \begin{aligned}
  {\TV}_\epsilon: \mathbb{R}^{N_x \times N_y} &\rightarrow [0,\infty ) \\
  u &\mapsto \frac{1}{N_xN_y} \sum_{i=1}^{N_x}\sum_{j=1}^{N_y} \sqrt{((\nabla u)_{i,j}^1)^2+((\nabla u)_{i,j}^2)^2+\epsilon}.
  \end{aligned}
\end{equation}

Now, let $A:\R^{N_x \times N_y} \rightarrow Y$ be a discrete forward operator, where now $Y=\R^{K}$ is also finite dimensional. We denote the sought ground truth as $u^\dagger\in\R^{N_x \times N_y}$ and the corresponding ground truth data $y^\dagger = Au^\dagger \in Y$. Further, we write $y = y^\dagger + \eta$ for the given, possibly noisy, data, where $\eta$ denotes the noise. With a discrepancy functional $\Dc_{y}: Y \rightarrow [0,\infty]$,  a discrete version of the minimization problem \eqref{eq_cont_problem} in order to obtain an approximation of $u^\dagger$ is given as
\begin{equation}\label{eq_discrete_problem}
  \tag{$G$-reg}
\begin{aligned}
\min\limits_{u,\mu,\theta} \Ec^D_{y}(u,\mu,\theta) \quad \text{ with }\quad \Ec^D_{y}(u,\mu,\theta) 
 := & \lambda\mathcal{D}_{y}(Au)+ s_\Rc(\nu) {\TV}_\epsilon( u-\sum_{n=1}^{N_1} \mu^1_{n}*_\sigma\theta^1_{n})\\
& + s_\mathcal{G}(\nu)\sum\limits_{l=1}^L \sum_{n=1}^{N_l} \norm{\mu^l_n}_1   + \gamma\sum\limits_{l=2}^L \sum\limits_{n=1}^{N_l} \frac{1}{2}\left\| \mu^{l-1}_{n} -\mu^l_{n}*_\sigma\theta^l_n\right\|_2^2,
\end{aligned}
\end{equation}
subject to
\begin{equation*}
  \begin{cases}
  \sum\limits_{i,j=1}^{r} |(\theta^l_n)_{i,j}|^2 \leq 1 \quad \text{for all $l,n$}, \\
  \sum\limits_{i,j=1}^{r} (\theta^1_n)_{i,j} = 0, \quad \text{for all $n$}.
  \end{cases}
\end{equation*}
Recall that the functions $s_\Rc$ and $s_\mathcal{G}$ are defined as
\[s_\Rc(\nu)= \frac{\nu}{\min(\nu, 1-\nu)},\quad s_\mathcal{G}(\nu)= \frac{1-\nu}{\min(\nu, 1-\nu)}.\]
The data fidelity term $\mathcal{D}_{y}$ and the forward operator $A$ depend on the specific application and are defined in the corresponding subsections of \Cref{supplement_experiments}, where, for algorithmic purposes, $\mathcal{D}_{y}$ is split as $\mathcal{D}_{y} = \mathcal{D}^1_{y} + \mathcal{D}^2_{y}$ with one of them always being zero.

\section{The Algorithm}\label{supplement_algorithm}
In order to solve \eqref{eq_discrete_problem}, we employ the iPALM algorithm \cite{Pock_2016}. To this aim, we distinguish different blocks of variables, which are updated separately in the algorithm, i.e., we define $x_1 \coloneqq u$, $x_2 \coloneqq \mu$ and $x_{3} \coloneqq \theta$. Further, we split the objective functional $\Ec^D_{y}$ into different parts as
\[\Ec^D_{y}(u,\mu,\theta) = H(u,\mu,\theta)+f_1(u)+f_2(\mu)+f_3(\theta),\]
where $H $ will be continuously differentiable and the functions $f_i$, $i=1,2,3$, will be proper, lower semicontinuous and convex and such that $\Ec^D_{y}$ is a Kurdyka \L{}ojasiewicz (KL) function (for more details on KL functions, see, e.g. \cite{kl_functions}).

More concretely, we set
\[ H(u,\mu,\theta) = \lambda\mathcal{D}^1_{y}(Au) + s_\Rc(\nu) {\TV}_\epsilon( u-\sum_{n=1}^{N_1} \mu^1_{n}*_\sigma\theta^1_{n}) + \gamma\sum\limits_{l=2}^L \sum\limits_{n=1}^{N_l} \frac{1}{2}\left\| \mu^{l-1}_{n} -\mu^l_{n}*_\sigma\theta^l_n\right\|_2^2,\]
and
\[
f_1(u)  = \lambda\mathcal{D}^2_{y}(Au),\quad
f_2(\mu)  =  s_\mathcal{G}(\nu)\sum\limits_{l=1}^L \sum_{n=1}^{N_l} \norm{\mu^l_n}_1, \quad
f_3(\theta) = \mathcal{I}_\mathcal{A}(\theta),
\]
with
\[ \mathcal{A} = \left\{ \theta \; \middle|\;  \sum\limits_{i,j=1}^{r} |(\theta^l_n)_{i,j}|^2 \leq 1 \quad \text{for all $l,n$ and} \sum\limits_{i,j=1}^{r} (\theta^1_n)_{i,j} = 0, \text{ for all $n$}\right\},\]
where, again, the choices of $\Dc_{y}^1$, $\Dc_{y}^2$ and $A$ depend on the concrete application and will be specified in the respective subsections of \Cref{supplement_experiments}.

Using this notation, the algorithm reads as follows.
\begin{algorithm}[H]
\caption{ \textsc{iPALM}} \label{algo_general}
\begin{algorithmic}[1]
\State \textbf{initialize}: $x^0 = (x^0_1, x^0_2, x^0_3)$
 \For{$m=1, 2, ...$:}
  \For{$i=1, 2, 3$:}
    \State Take $\alpha^m_i, \beta^m_i \in [0,1]$ and $\tau_i^m>0$ and compute
\begin{equation*}
\begin{gathered}
y_i^m = x_i^m + \alpha_i^m (x_i^m-x_i^{m-1}), \\
z_i^m = x_i^m + \beta_i^m (x_i^m-x_i^{m-1}), \\
\end{gathered}
\end{equation*}
\begin{equation}\label{eq_algostep}
x_i^{m+1} \in {\prox}_{\tau_i^m}^{f_i} \left( y_i^m-\frac{1}{\tau_i^m}\nabla_{x_i} H(x_1^{m+1}, ..., x_{i-i}^{m+1}, z_i^m, x_{i+1}^m, ..., x_n^m) \right) .
\end{equation}
\EndFor
\EndFor
\end{algorithmic}
\end{algorithm}

According to \cite{Pock_2016}, convergence of the algorithm in case of bounded iterates can be ensured by choosing the algorithmic parameters $(\alpha_i^m)_{i,m}$, $(\beta_i^m)_{i,m}$, $(\tau_i^m)_{i,m}$, and $(\delta_i)_i$ according to the following conditions.
\begin{enumerate}
\item\label{item_eps_algo} For $\epsilon\in (0,1)$, there are $\overline{\alpha_i}$, $\overline{\beta_i} \in (0,1-\epsilon)$, such that for all $i=1, 2, 3$ and $m\in\mathbb{N}$, $0\leq \alpha_i^m \leq \overline{\alpha_i}$ and $0\leq \beta_i^m \leq \overline{\beta_i}$.
\item\label{item_step} Using $\epsilon$ from \eqref{item_eps_algo} above, define, for $i=1, 2, 3$,
\begin{equation*}
\begin{aligned}
\delta_i & = \frac{\overline{\alpha_i}+2\overline{\beta_i}}{2(1-\epsilon-\overline{\alpha_i})}L_i, \\
\tau_i^m & = \frac{(1+\epsilon)\delta_i + (1+\beta_i^m)L_i^m}{2-\alpha_i^m},
\end{aligned}
\end{equation*}
where $0<L_i^m\leq L_i$ for all $m$ and $L_i^m$ satisfies the following properties:
\begin{enumerate}
\item Descent property:
\begin{multline*}
H(x_1^{m+1}, ..., x_{i-i}^{m+1}, x_i^{m+1}, x_{i+1}^m, ..., x_n^m)  
\leq   H(x_1^{m+1}, ..., x_{i-i}^{m+1}, x_i^{m}, x_{i+1}^m, ..., x_n^m) \\
 + \langle x_i^{m+1}-x_i^m , \nabla_{x_i} H(x_1^{m+1}, ..., x_{i-i}^{m+1}, x_i^{m}, x_{i+1}^m, ..., x_n^m) \rangle 
 + \frac{L_i^m}{2}\norm{x_i^{m+1}-x_i^m}_2^2
\end{multline*}
\item Lipschitz property:
\begin{multline*}
\norm{\nabla_{x_i} H(x_1^{m+1},.., x_{i-i}^{m+1}, z_i^{m}, x_{i+1}^m,.., x_n^m)-\nabla_{x_i} H(x_1^{m+1},.., x_{i-i}^{m+1}, x_i^{m}, x_{i+1}^m,.., x_n^m) }_2 \\
\leq L_i^m \norm{ z_i^m-x_i^m }_2
\end{multline*}
\end{enumerate}
\end{enumerate}

\begin{remark}\
\begin{itemize}
\item Note that in the descent and Lipschitz property above, the norm $\|\cdot \|_2$ is computed solely as the square root of the sum of all squared entries, without division by the number of entries.
\item In order to decrease computational effort, inspired by the experiments in \cite{Pock_2016}, in practice we replace $L_i$ by $L^m_i$ in the computation of $\delta_i$ in \eqref{item_step}.
\item To determine $L^m_i$ satisfying the descent and Lipschitz property, we use a backtracking scheme, which iteratively increases $L^m_i$ until both properties are satisfied.
\item Concrete choices of the algorithmic parameters are summarized in \cref{table_parameters_general}. 
For that, note that we used a rather large number of iterations, in order to ensure optimality, but the objective functional usually does not change much after around 2000 iterations, see \cref{fig:obj_func}, suggesting that also a lower number of iterations
would be sufficient.
\end{itemize}
\end{remark}

\begin{figure}
\centering
\includegraphics[scale=0.5]{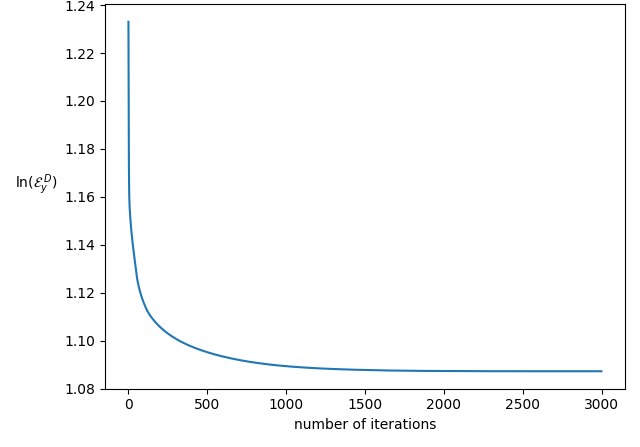}
\caption{Objective functional for denoising with the \emph{fish} image.\label{fig:obj_func}}
\end{figure}

For all choices of $\Dc_{y}^i$, $i=1,2$ and $A$, in our experiments, the gradient of $H$ and the proximal mappings of $f_i$ can be computed explicitly and rather easily, therefore we omit those computations and rather refer to the publicly available source code \cite{gen_reg_git}. For the latter, we implemented our method in Python with a parallelization for GPUs based on  PyOpenCL \cite{klockner2012pycuda}.

\noindent\textbf{Initialization.} Due to the non-convexity of our objective functional, the initialization of the algorithm has a non-negligible impact on the solution. We found that this issue becomes more apparent with deeper networks where, with a bad initialization, often large parts of the network become trivial, i.e., many latent variables are zero. As a remedy, we use an initialization strategy, that successively increases the network depth during the first iterations of the algorithm. That is, we start with a one layer network, compute some iterations, then increase the network depth by one layer and repeat this until the desired network depth is obtained. When increasing the network depth, say from $L$ to $L+1$, we inherit the already used kernels of the network and initialize the new kernels $\theta^{L+1}$ randomly. The new latent variables, $\mu^{L+1}$ are initialized as the adjoint operator of $\mu^{L+1}\mapsto\mu^{L+1}*_\sigma\theta^{L+1}$ applied to $\mu^L$. We further set the latent variables $\mu^l_n$ for $l<L+1$ to $\mu^{l+1}_n*_\sigma\theta^{l+1}_n$ to ensure that, at the beginning of the new iterations, the functionals $\Jc_l$ evaluate to zero. The effect of this initialization can be observed in \cref{fig_network_supp}.

\begin{figure}
\centering
\includegraphics[height = 7.5cm]{images_gen_reg_denoising_cart_text_mix_network_lines.png}%
\includegraphics[height = 7.5cm]{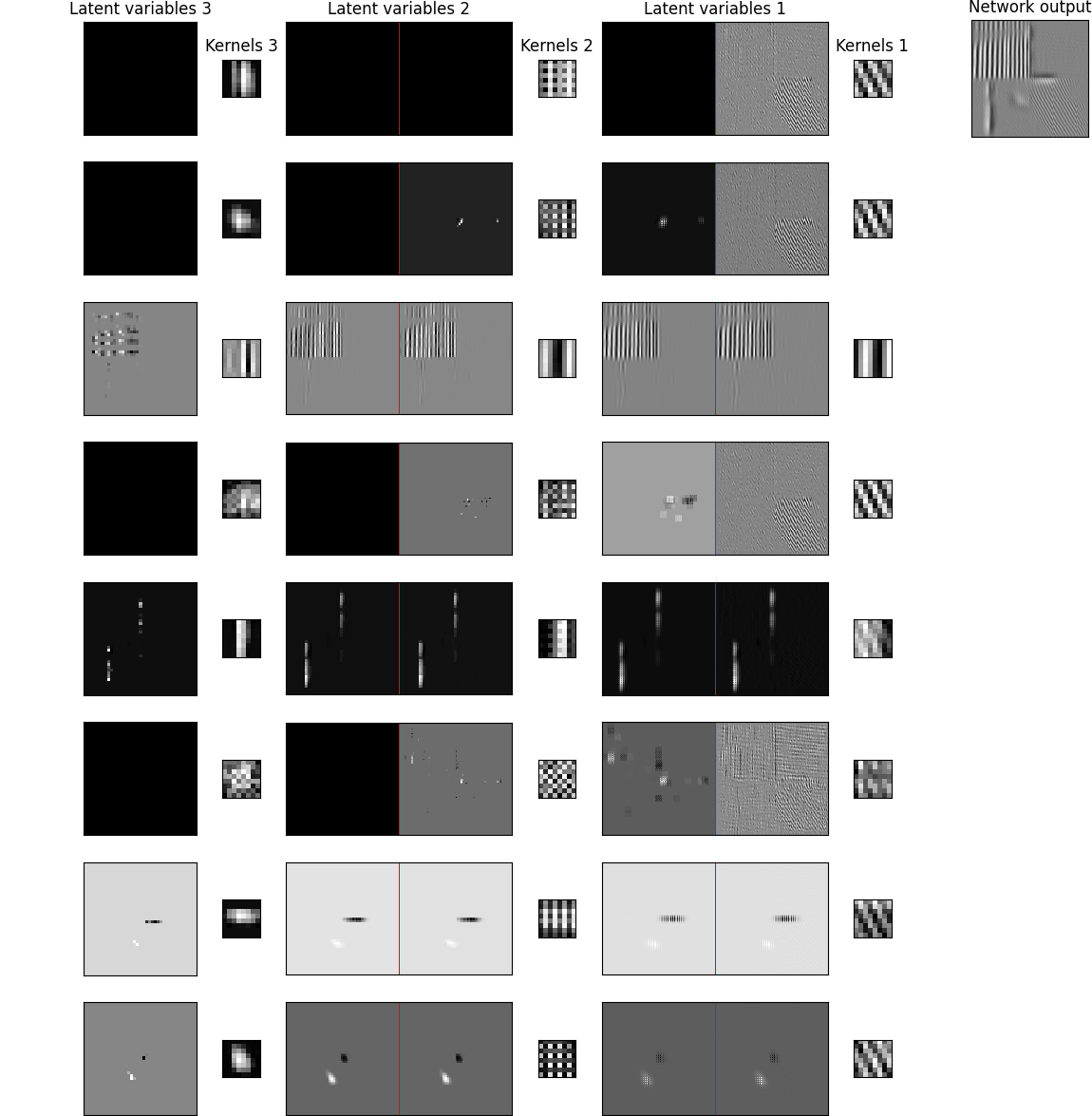}
\caption{
Convolutional networks obtained from denoising of the \emph{mix} image with the proposed method, using two different initialization strategies. Left: proposed initialization strategy that successively increases the network depth. Right: Directly starting with the full network depth.}
\label{fig_network_supp}
\end{figure}

\section{Numerical Results}\label{supplement_experiments}

In this section, we provide some additional experimental results with the \emph{zebra} and \emph{patchtest} images, as well as further details on the experimental setup and the comparison to related methods, which we omitted in the main paper. In particular, the following subsections provide a summary of the respective choices of $\Dc_{y}^i$, $i=1,2$, $A$ and the noise $\eta$ for all experiments. The latter was partly covered in the main paper, but we will repeat it here for the sake of readability and completeness.

\noindent\textbf{Parameter choice and comparison to related methods.} 
Leaving the architecture of our generative prior as in \cref{table_parameters_general} fixed for all experiments, our method requires the choice of one or two parameters, depending on the data discrepancy used. In order to ensure a fair comparison, only a single parameter of our method was optimized in each type of experiment (according to visual quality of the results), while the other was left fixed, if applicable. A summary of our parameter choice is provided in \cref{table_parameters}.

For the comparisons to DIP and CL we use the codes the authors have made publicly available, where for CL we use the convex version of the model and the parameters as suggested by the authors (parameters were chosen for optimal visual results). Since the DIP uses different architectures for different applications, for the experiments with the DIP, we always use an architecture, that was used in \cite{deep_image_prior} for the same type of experiment (inpainting, denoising, etc.). The exact choices are mentioned later in the corresponding subsections for the different experiments. When necessary (that is, whenever overfitting is an issue), we further optimize the number of iterations of the DIP, see \cref{table_numiter_dip}, trying to obtain visually optimal results. For the experiments with TGV regularization, we use a custom implementation and results were optimized over the single regularization parameter to also obtain visually optimal results.
 
In the DIP code, the images are cropped at the beginning, such that their dimensions are multiples of 32 or 64 which changes the sizes of the \emph{fish} and \emph{zebra} images. For the \emph{patchtest} image, we omit the cropping, since it would remove a substantial part of the image. We also point out that, in the DIP code, the data discrepancy functional is always the squared 2-norm, i.e., $\Dc_{y}(z) = \|z-y\|_2^2$, since the method relies on differentiability. In contrast to that, in our method we use also non-smooth data discrepancies, namely indicator functions, in the cases of inpainting, super-resolution and JPEG decompression.

\begin{table}[h]
\caption[Parameters]{List of all parameters involved in our model and its algorithmic realization}
\centering
\begin{tabular}{ p{4cm} p{6cm} }
 \toprule
 \multicolumn{2}{ c }{Parameter choice in \eqref{eq_discrete_problem}}\\
 \midrule
 $\epsilon$   & 0.05\\
 Network depth $L$ & 3\\
 Filter depth $(N_1,N_2,N_3)$ & (8,8,8)\\
 Connections & $E_1 = \{(n,1) \st n \in \{1,\ldots,8\}\}$ \\ 
  & $E_l = \{ (n,n)\st n \in \{1,\ldots,8\}\}$, $l=2,3$ \\
 Kernel size $r$    & 8\\
 Stride $\sigma$ & $\sigma=1$ in $1^\text{st}$ layer \\
  & $\sigma = 2$ in remaining layers\\
 $\gamma$ & $2000$\\
 $\lambda$ & application-dependent, see \cref{table_parameters}\\
 $\nu$ & application-dependent, see \cref{table_parameters}\\
 \midrule
 \multicolumn{2}{ c }{Parameter choice in \cref{algo_general}}\\
 \midrule
 $\epsilon$ & 0.03 \\
 $\beta_1^m,\overline{\beta_1},\beta_2^m,\overline{\beta_2},\beta_3^m,\overline{\beta_3}$   & 0.7\\
 $\alpha_1^m,\overline{\alpha_1},\alpha_2^m,\overline{\alpha_2},\alpha_3^m,\overline{\alpha_3}$   & 0.7\\
 Number of iterations & 8000\\
 \bottomrule
\end{tabular}
\label{table_parameters_general}
\end{table}

\begin{table}[h]
\caption[Regularization parameters]{Parameter choices for the regularization parameters $\lambda$ and $\nu$ for all experiments.}
\centering
\begin{tabular}{ p{4cm} p{1.5cm} p{1.5cm} p{1.5cm} p{1.5cm} p{1.5cm}}
\toprule
 & \emph{Barbara} & \emph{mix} & \emph{patchtest} & \emph{fish} & \emph{zebra}\\
 \midrule
 \textbf{Inpainting}\\
 $\nu$        & 0.975       & 0.975       & 0.975       & 0.925 & 0.975 \\
 
 \textbf{Denoising}\\
  $\nu = 0.925$\\
  $\lambda$       & 22.5      & 20.0 &      20.0    & 30.0      & 30.0\\
 
 \textbf{Deconvolution}\\
  $\nu = 0.925$\\
  $\lambda$       & 600       & 700 &     800   & 500       & 1000\\

 \textbf{Super-resolution}\\
  $\nu$ & 0.975 & - & - & 0.95 & 0.9\\
 
 \textbf{JPEG decompression}\\
  $\nu$ & 0.875 & 0.875 & 0.95 & 0.85 & 0.75\\
 \bottomrule
\end{tabular}
\label{table_parameters}
\end{table}

\begin{table}[h]
\caption{Number of iterations used for DIP results.}
\centering
\begin{tabular}{ p{4cm} p{1.5cm} p{1.5cm} p{1.5cm} p{1.5cm} p{1.5cm}}
\toprule
 & \emph{Barbara} & \emph{mix} & \emph{patchtest} & \emph{fish} & \emph{zebra}\\
 \midrule
 \textbf{Denoising} & 700       & 700       & 600       & 1500 & 2500 \\
 \textbf{JPEG decompression} & 6000 & 10200 & 2000 & 8500 & 14900\\
 \bottomrule
\end{tabular}
\label{table_numiter_dip}
\end{table}

\subsection{Inpainting}
In the case of inpainting, we assume that some pixels of an original image $u^\dagger$ are known exactly, while the remaining ones are unknown. Given an inpainting mask $\mathcal{M}\in \{0,1\}^{N_x\times N_y}$, such that $\mathcal{M}_{i,j}=1$ whenever $u^\dagger_{i,j}$ is known and $\mathcal{M}_{i,j}=0$ else, we define
\begin{equation*}
Au = \mathcal{M} \odot u, \quad \Dc^1_{y} \equiv 0, \quad \Dc^2_{y} (z) = \Ic_{\{ y \}},
\end{equation*}
where $\odot$ denotes the Hadamard product. In this case, the noise $\eta$ is zero such that $y = y^\dagger = Au^\dagger$. For our experiments, we used a randomly generated inpainting mask $\mathcal{M}$, where around $70\%$ of the entries were set to zero. 
For all inpainting experiments with the DIP, we use the architecture as proposed by the authors for the inpainting experiment shown in \cite[Figure 7]{deep_image_prior}, and set the number of iterations to 8000. For CL, the parameter as proposed by the authors was used, and TGV regularization, in this case, does not require a regularization parameter. A summary of all results for inpainting can be found in \cref{fig_inpainting_supplement}.

\newcommand\fw{2.5cm} 
\newcommand\fwh{1.25cm} 

\newcommand\ffw{3cm} 
\newcommand\ffwh{1.5cm} 

\begin{figure}
\centering
\begin{subfigure}{\textwidth}
\centering
\begin{subfigure}[t]{\fw}
\includegraphics[width = \fw]{images_gen_reg_inpainting_barbara_crop_corrupted.png}
\end{subfigure}%
\begin{subfigure}[t]{\fw}
\includegraphics[width = \fw]{images_tgv_inpainting_barbara_crop_recon_closeups.png}
\includegraphics[width = \fwh]{images_tgv_inpainting_barbara_crop_recon_detail_1.png}%
\includegraphics[width = \fwh]{images_tgv_inpainting_barbara_crop_recon_detail_2.png}
\end{subfigure}%
\begin{subfigure}[t]{\fw}
\includegraphics[width = \fw]{images_convex_learning_inpainting_barbara_crop_recon_closeups.png}
\includegraphics[width = \fwh]{images_convex_learning_inpainting_barbara_crop_recon_detail_1.png}%
\includegraphics[width = \fwh]{images_convex_learning_inpainting_barbara_crop_recon_detail_2.png}
\end{subfigure}%
\begin{subfigure}[t]{\fw}
\includegraphics[width = \fw]{images_ulyanov_inpainting_barbara_crop_recon_closeups.png}
\includegraphics[width = \fwh]{images_ulyanov_inpainting_barbara_crop_recon_detail_1.png}%
\includegraphics[width = \fwh]{images_ulyanov_inpainting_barbara_crop_recon_detail_2.png}
\end{subfigure}%
\begin{subfigure}[t]{\fw}
\includegraphics[width = \fw]{images_gen_reg_inpainting_barbara_crop_recon_closeups.png}
\includegraphics[width = \fwh]{images_gen_reg_inpainting_barbara_crop_recon_detail_1.png}%
\includegraphics[width = \fwh]{images_gen_reg_inpainting_barbara_crop_recon_detail_2.png}
\end{subfigure}%
\begin{subfigure}[t]{\fw}
\includegraphics[width = \fw]{images_gen_reg_inpainting_barbara_crop_original_closeups.png}
\includegraphics[width = \fwh]{images_gen_reg_inpainting_barbara_crop_original_detail_1.png}%
\includegraphics[width = \fwh]{images_gen_reg_inpainting_barbara_crop_original_detail_2.png}
\end{subfigure}
\end{subfigure}

\begin{subfigure}{\textwidth}
\centering
\includegraphics[width = \fw]{images_gen_reg_inpainting_cart_text_mix_corrupted.png}%
\includegraphics[width = \fw]{images_tgv_inpainting_cart_text_mix_recon.png}%
\includegraphics[width = \fw]{images_convex_learning_inpainting_cart_text_mix_recon.png}%
\includegraphics[width = \fw]{images_ulyanov_inpainting_cart_text_mix_recon.png}%
\includegraphics[width = \fw]{images_gen_reg_inpainting_cart_text_mix_recon.png}%
\includegraphics[width = \fw]{images_gen_reg_inpainting_cart_text_mix_original.png}
\end{subfigure}

\begin{subfigure}{\textwidth}
\centering
\includegraphics[width = \fw]{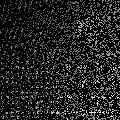}%
\includegraphics[width = \fw]{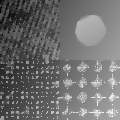}%
\includegraphics[width = \fw]{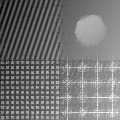}%
\includegraphics[width = \fw]{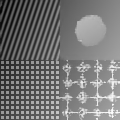}%
\includegraphics[width = \fw]{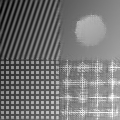}%
\includegraphics[width = \fw]{images_gen_reg_inpainting_patchtest_original.png}
\end{subfigure}

\begin{subfigure}{\textwidth}
\centering
\begin{subfigure}[t]{\ffw}
\includegraphics[width = \ffw]{images_gen_reg_inpainting_fish_corrupted.png}
\end{subfigure}%
\begin{subfigure}[t]{\ffw}
\includegraphics[width = \ffw]{images_tgv_inpainting_fish_recon_closeups.png}
\includegraphics[width = \ffwh]{images_tgv_inpainting_fish_recon_detail_1.png}%
\includegraphics[width = \ffwh]{images_tgv_inpainting_fish_recon_detail_2.png}
\end{subfigure}%
\begin{subfigure}[t]{\ffw}
\includegraphics[width = \ffw]{images_ulyanov_inpainting_fish_recon_closeups.png}
\includegraphics[width = \ffwh]{images_ulyanov_inpainting_fish_recon_detail_1.png}%
\includegraphics[width = \ffwh]{images_ulyanov_inpainting_fish_recon_detail_2.png}
\end{subfigure}%
\begin{subfigure}[t]{\ffw}
\includegraphics[width = \ffw]{images_gen_reg_inpainting_fish_recon_closeups.png}
\includegraphics[width = \ffwh]{images_gen_reg_inpainting_fish_recon_detail_1.png}%
\includegraphics[width = \ffwh]{images_gen_reg_inpainting_fish_recon_detail_2.png}
\end{subfigure}%
\begin{subfigure}[t]{\ffw}
\includegraphics[width = \ffw]{images_gen_reg_inpainting_fish_original_closeups.png}
\includegraphics[width = \ffwh]{images_gen_reg_inpainting_fish_original_detail_1.png}%
\includegraphics[width = \ffwh]{images_gen_reg_inpainting_fish_original_detail_2.png}
\end{subfigure}
\end{subfigure}

\begin{subfigure}{\textwidth}
\centering
\includegraphics[width = \ffw]{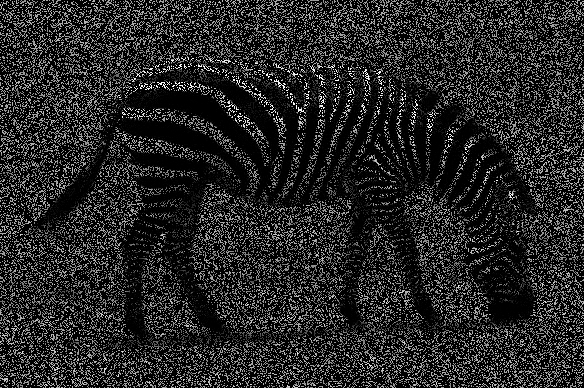}%
\includegraphics[width = \ffw]{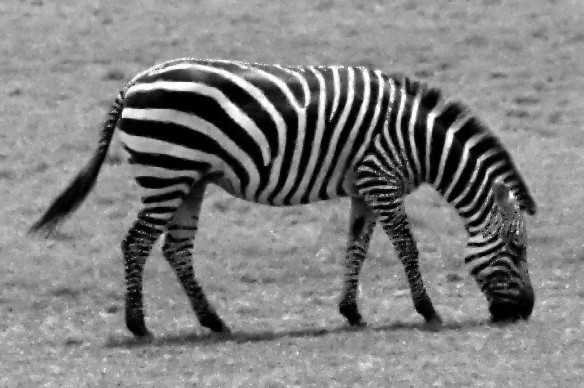}%
\includegraphics[width = \ffw]{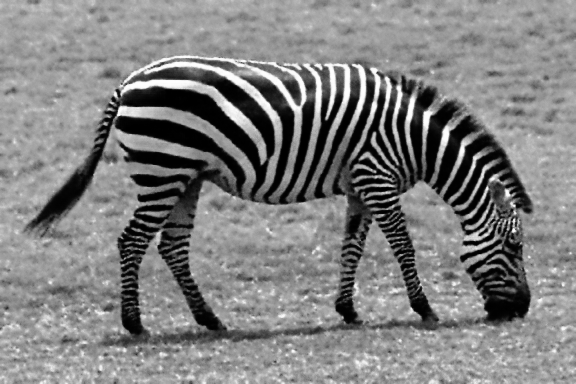}%
\includegraphics[width = \ffw]{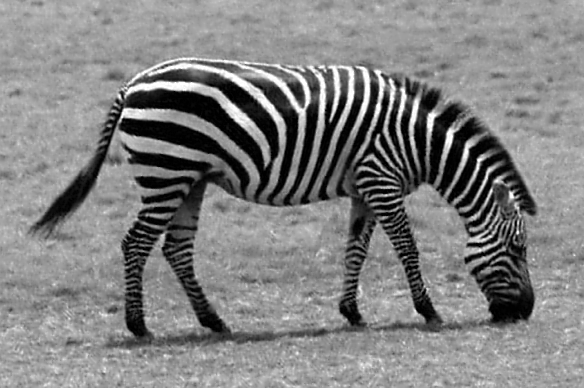}%
\includegraphics[width = \ffw]{images_gen_reg_inpainting_zebra_original.png}
\end{subfigure}
\caption{Inpainting from 30\% known pixels with closeups. First three rows, from left to right: Data, TGV, CL, DIP, proposed, ground truth. Last two rows, from left to right: Data, TGV, DIP, proposed, ground truth.}
\label{fig_inpainting_supplement}
\end{figure}

\subsection{Denoising}
In the case of denoising, the forward operator $A$ is set to be the identity. The data fidelity term is given as 
\[ \Dc^1_{y}(z) = \frac{1}{2} \| z-y\|_2^2, \quad \Dc^2_{y} \equiv 0. \]
For denoising, the data is corrupted with Gaussian noise $\eta$ with mean zero and standard deviation 0.1 times the range of $y^\dagger$. 

For the DIP we transformed the corrupted gray scale image to obtain an RGB image with three identical channels, which we then used as data for the algorithm. We used the architecture as proposed by the authors for the denoising experiment corresponding to \cite[Figure 4]{deep_image_prior}, and optimized the number of iterations for each image, see \cref{table_numiter_dip}. For CL again the setting as proposed by the authors was used, and for TGV the regularization parameter was chosen to achieve optimal visual image quality. A summary of all results for denoising can be found in \cref{fig_denoising_supplement}.

\renewcommand\fw{2.5cm} 
\renewcommand\fwh{1.25cm} 

\renewcommand\ffw{3cm} 
\renewcommand\ffwh{1.5cm}

\begin{figure}
\centering
\begin{subfigure}{\textwidth}
\centering
\begin{subfigure}[t]{\fw}
\includegraphics[width = \fw]{images_gen_reg_denoising_barbara_crop_corrupted_closeups.png}
\includegraphics[width = \fwh]{images_gen_reg_denoising_barbara_crop_corrupted_detail_1.png}%
\includegraphics[width = \fwh]{images_gen_reg_denoising_barbara_crop_corrupted_detail_2.png}
\end{subfigure}%
\begin{subfigure}[t]{\fw}
\includegraphics[width = \fw]{images_tgv_denoising_barbara_crop_recon_closeups.png}
\includegraphics[width = \fwh]{images_tgv_denoising_barbara_crop_recon_detail_1.png}%
\includegraphics[width = \fwh]{images_tgv_denoising_barbara_crop_recon_detail_2.png}
\end{subfigure}%
\begin{subfigure}[t]{\fw}
\includegraphics[width = \fw]{images_convex_learning_denoising_barbara_crop_recon_closeups.png}
\includegraphics[width = \fwh]{images_convex_learning_denoising_barbara_crop_recon_detail_1.png}%
\includegraphics[width = \fwh]{images_convex_learning_denoising_barbara_crop_recon_detail_2.png}
\end{subfigure}%
\begin{subfigure}[t]{\fw}
\includegraphics[width = \fw]{images_ulyanov_denoising_barbara_crop_recon_closeups.png}
\includegraphics[width = \fwh]{images_ulyanov_denoising_barbara_crop_recon_detail_1.png}%
\includegraphics[width = \fwh]{images_ulyanov_denoising_barbara_crop_recon_detail_2.png}
\end{subfigure}%
\begin{subfigure}[t]{\fw}
\includegraphics[width = \fw]{images_gen_reg_denoising_barbara_crop_recon_closeups.png}
\includegraphics[width = \fwh]{images_gen_reg_denoising_barbara_crop_recon_detail_1.png}%
\includegraphics[width = \fwh]{images_gen_reg_denoising_barbara_crop_recon_detail_2.png}
\end{subfigure}%
\begin{subfigure}[t]{\fw}
\includegraphics[width = \fw]{images_gen_reg_denoising_barbara_crop_original_closeups.png}
\includegraphics[width = \fwh]{images_gen_reg_denoising_barbara_crop_original_detail_1.png}%
\includegraphics[width = \fwh]{images_gen_reg_denoising_barbara_crop_original_detail_2.png}
\end{subfigure}
\end{subfigure}

\begin{subfigure}{\textwidth}
\centering
\includegraphics[width = \fw]{images_gen_reg_denoising_cart_text_mix_corrupted.png}%
\includegraphics[width = \fw]{images_tgv_denoising_cart_text_mix_recon.png}%
\includegraphics[width = \fw]{images_convex_learning_denoising_cart_text_mix_recon.png}%
\includegraphics[width = \fw]{images_ulyanov_denoising_cart_text_mix_recon.png}%
\includegraphics[width = \fw]{images_gen_reg_denoising_cart_text_mix_recon.png}%
\includegraphics[width = \fw]{images_gen_reg_denoising_cart_text_mix_original.png}
\end{subfigure}
\begin{subfigure}{\textwidth}
\centering
\includegraphics[width = \fw]{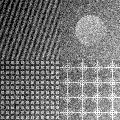}%
\includegraphics[width = \fw]{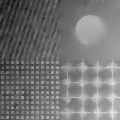}%
\includegraphics[width = \fw]{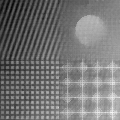}%
\includegraphics[width = \fw]{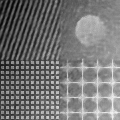}%
\includegraphics[width = \fw]{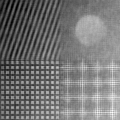}%
\includegraphics[width = \fw]{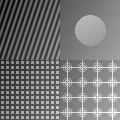}
\end{subfigure}

\begin{subfigure}{\textwidth}
\centering
\begin{subfigure}[t]{\ffw}
\includegraphics[width = \ffw]{images_gen_reg_denoising_fish_corrupted_closeups.png}
\includegraphics[width = \ffwh]{images_gen_reg_denoising_fish_corrupted_detail_1.png}%
\includegraphics[width = \ffwh]{images_gen_reg_denoising_fish_corrupted_detail_2.png}
\end{subfigure}%
\begin{subfigure}[t]{\ffw}
\includegraphics[width = \ffw]{images_tgv_denoising_fish_recon_closeups.png}
\includegraphics[width = \ffwh]{images_tgv_denoising_fish_recon_detail_1.png}%
\includegraphics[width = \ffwh]{images_tgv_denoising_fish_recon_detail_2.png}
\end{subfigure}%
\begin{subfigure}[t]{\ffw}
\includegraphics[width = \ffw]{images_ulyanov_denoising_fish_recon_closeups.png}
\includegraphics[width = \ffwh]{images_ulyanov_denoising_fish_recon_detail_1.png}%
\includegraphics[width = \ffwh]{images_ulyanov_denoising_fish_recon_detail_2.png}
\end{subfigure}%
\begin{subfigure}[t]{\ffw}
\includegraphics[width = \ffw]{images_gen_reg_denoising_fish_recon_closeups.png}
\includegraphics[width = \ffwh]{images_gen_reg_denoising_fish_recon_detail_1.png}%
\includegraphics[width = \ffwh]{images_gen_reg_denoising_fish_recon_detail_2.png}
\end{subfigure}%
\begin{subfigure}[t]{\ffw}
\includegraphics[width = \ffw]{images_gen_reg_denoising_fish_original_closeups.png}
\includegraphics[width = \ffwh]{images_gen_reg_denoising_fish_original_detail_1.png}%
\includegraphics[width = \ffwh]{images_gen_reg_denoising_fish_original_detail_2.png}
\end{subfigure}
\end{subfigure}

\begin{subfigure}{\textwidth}
\centering
\includegraphics[width = \ffw]{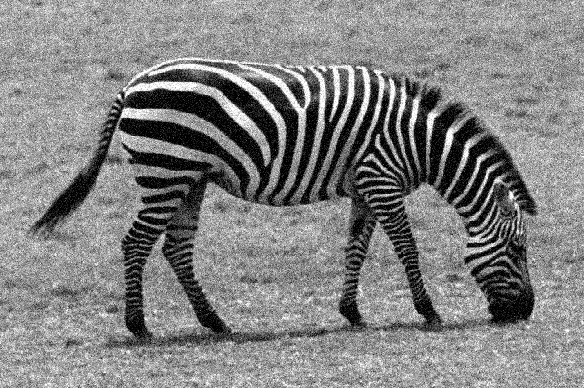}%
\includegraphics[width = \ffw]{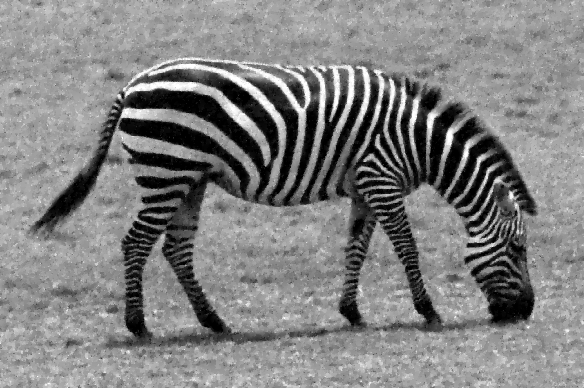}%
\includegraphics[width = \ffw]{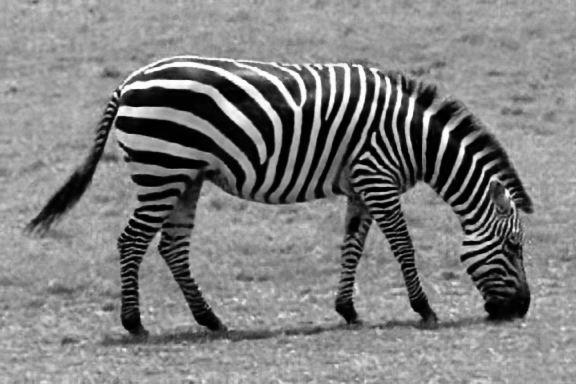}%
\includegraphics[width = \ffw]{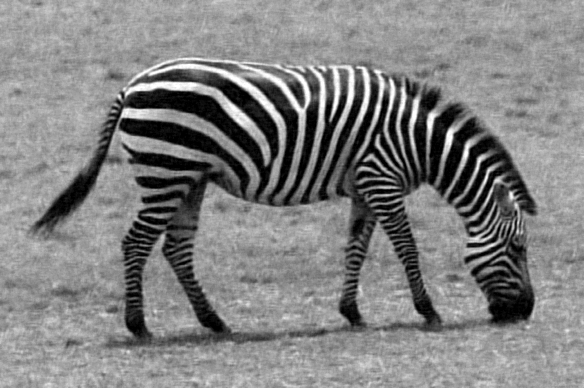}%
\includegraphics[width = \ffw]{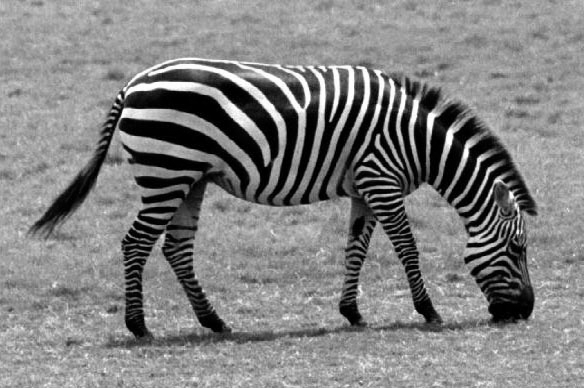}
\end{subfigure}
\caption{Denoising with Gaussian noise with mean zero and standard deviation 0.1 times image range. First three rows, from left to right: Data, TGV, CL, DIP, proposed, ground truth. Last two rows, from left to right: Data, TGV, DIP, proposed, ground truth.}
\label{fig_denoising_supplement}
\end{figure}

\subsection{Deconvolution}
Now we consider a deconvolution experiment. To this aim, given a convolution kernel $k\in\mathbb{R}^{(2s+1)\times (2s+1)}$, we define the forward operator as $A:\mathbb{R}^{N_x\times N_y}\rightarrow\mathbb{R}^{N_x\times N_y}$
\[ (Au)_{i,j} = \sum\limits_{i'=-s}^s \sum\limits_{j'=-s}^s k_{s+1+i',s+1+j'} u_{i-i',j-j'},\]
where we set $u_{i,j}=0$, whenever $(i,j)\notin \{1,2,...,N_x\}\times\{1,2,...,N_y\}$. The data fidelity terms are the same as in the case of denoising, i.e.,
\[ \Dc^1_{y}(z) = \frac{1}{2} \| z-y \|_2^2, \quad \Dc^2_{y} \equiv 0. \]
In the experiments, we use a Gaussian convolution kernel, with size $9\times 9$ for the \emph{Barbara} and \emph{mix} images, $13\times 13$ for the \emph{fish} image and $15\times 15$ for the \emph{zebra} image and standard deviation 0.25. The noise $\eta$ is Gaussian noise with standard deviation 0.025 times the range of $y^\dagger$ and mean zero. 
Here, we do not compare to DIP, since in \cite{deep_image_prior} deconvolution is not included in the experiments. For CL and TGV, the parameters were again chosen as suggested by the authors and for optimal visual results, respectively.
A summary of all results for deconvolution can be found in \cref{fig_deconv_supplement}.

\renewcommand\fw{3cm} 
\renewcommand\fwh{1.5cm} 

\renewcommand\ffw{3.75cm} 
\renewcommand\ffwh{1.875cm}

\begin{figure}
\centering
\begin{subfigure}{\textwidth}
\centering
\begin{subfigure}[t]{\fw}
\includegraphics[width = \fw]{images_gen_reg_deconvolution_barbara_crop_corrupted_closeups.png}
\includegraphics[width = \fwh]{images_gen_reg_deconvolution_barbara_crop_corrupted_detail_1.png}%
\includegraphics[width = \fwh]{images_gen_reg_deconvolution_barbara_crop_corrupted_detail_2.png}
\end{subfigure}%
\begin{subfigure}[t]{\fw}
\includegraphics[width = \fw]{images_tgv_deconvolution_barbara_crop_recon_closeups.png}
\includegraphics[width = \fwh]{images_tgv_deconvolution_barbara_crop_recon_detail_1.png}%
\includegraphics[width = \fwh]{images_tgv_deconvolution_barbara_crop_recon_detail_2.png}
\end{subfigure}%
\begin{subfigure}[t]{\fw}
\includegraphics[width = \fw]{images_convex_learning_deconvolution_barbara_crop_recon_closeups.png}
\includegraphics[width = \fwh]{images_convex_learning_deconvolution_barbara_crop_recon_detail_1.png}%
\includegraphics[width = \fwh]{images_convex_learning_deconvolution_barbara_crop_recon_detail_2.png}
\end{subfigure}%
\begin{subfigure}[t]{\fw}
\includegraphics[width = \fw]{images_gen_reg_deconvolution_barbara_crop_recon_closeups.png}
\includegraphics[width = \fwh]{images_gen_reg_deconvolution_barbara_crop_recon_detail_1.png}%
\includegraphics[width = \fwh]{images_gen_reg_deconvolution_barbara_crop_recon_detail_2.png}
\end{subfigure}%
\begin{subfigure}[t]{\fw}
\includegraphics[width = \fw]{images_gen_reg_deconvolution_barbara_crop_original_closeups.png}
\includegraphics[width = \fwh]{images_gen_reg_deconvolution_barbara_crop_original_detail_1.png}%
\includegraphics[width = \fwh]{images_gen_reg_deconvolution_barbara_crop_original_detail_2.png}
\end{subfigure}
\end{subfigure}

\begin{subfigure}{\textwidth}
\centering
\includegraphics[width = \fw]{images_gen_reg_deconvolution_cart_text_mix_corrupted.png}%
\includegraphics[width = \fw]{images_tgv_deconvolution_cart_text_mix_recon.png}%
\includegraphics[width = \fw]{images_convex_learning_deconvolution_cart_text_mix_recon.png}%
\includegraphics[width = \fw]{images_gen_reg_deconvolution_cart_text_mix_recon.png}%
\includegraphics[width = \fw]{images_gen_reg_deconvolution_cart_text_mix_original.png}
\end{subfigure}

\begin{subfigure}{\textwidth}
\centering
\begin{subfigure}[t]{\ffw}
\includegraphics[width = \ffw]{images_gen_reg_deconvolution_fish_corrupted_closeups.png}
\includegraphics[width = \ffwh]{images_gen_reg_deconvolution_fish_corrupted_detail_1.png}%
\includegraphics[width = \ffwh]{images_gen_reg_deconvolution_fish_corrupted_detail_2.png}
\end{subfigure}%
\begin{subfigure}[t]{\ffw}
\includegraphics[width = \ffw]{images_tgv_deconvolution_fish_recon_closeups.png}
\includegraphics[width = \ffwh]{images_tgv_deconvolution_fish_recon_detail_1.png}%
\includegraphics[width = \ffwh]{images_tgv_deconvolution_fish_recon_detail_2.png}
\end{subfigure}%
\begin{subfigure}[t]{\ffw}
\includegraphics[width = \ffw]{images_gen_reg_deconvolution_fish_recon_closeups.png}
\includegraphics[width = \ffwh]{images_gen_reg_deconvolution_fish_recon_detail_1.png}%
\includegraphics[width = \ffwh]{images_gen_reg_deconvolution_fish_recon_detail_2.png}
\end{subfigure}%
\begin{subfigure}[t]{\ffw}
\includegraphics[width = \ffw]{images_gen_reg_deconvolution_fish_original_closeups.png}
\includegraphics[width = \ffwh]{images_gen_reg_deconvolution_fish_original_detail_1.png}%
\includegraphics[width = \ffwh]{images_gen_reg_deconvolution_fish_original_detail_2.png}
\end{subfigure}
\end{subfigure}

\begin{subfigure}{\textwidth}
\centering
\includegraphics[width = \ffw]{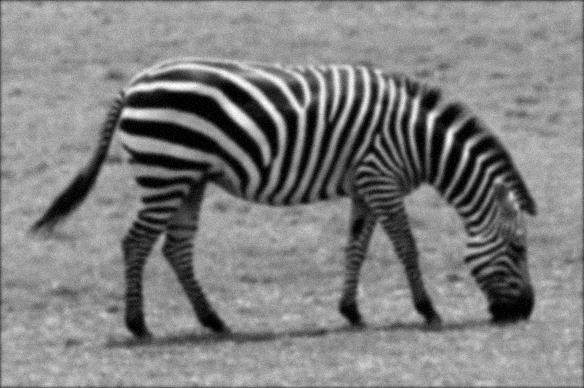}%
\includegraphics[width = \ffw]{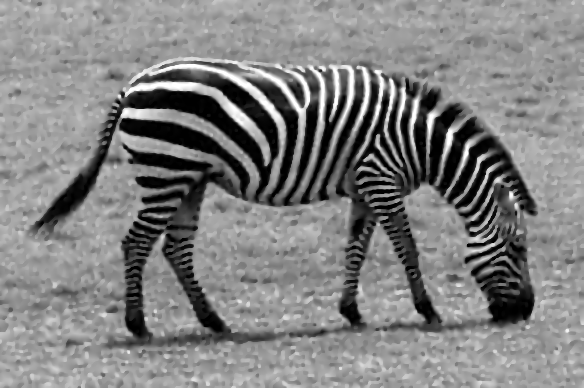}%
\includegraphics[width = \ffw]{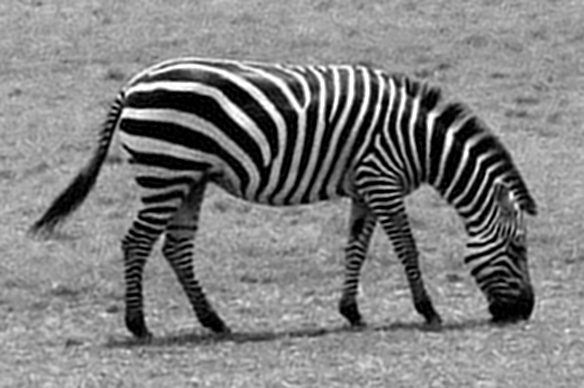}%
\includegraphics[width = \ffw]{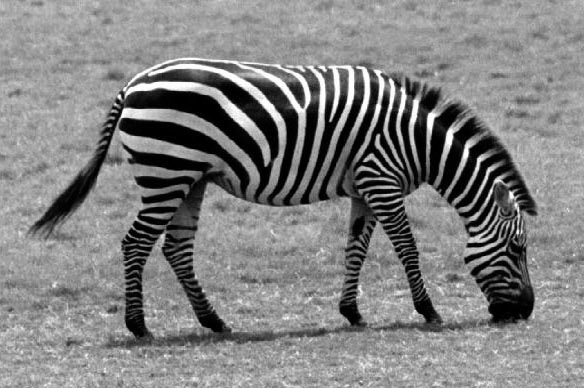}
\end{subfigure}
\caption{Deconvolution. First two rows from left to right: Data, TGV, CL, proposed, ground truth. Last two rows from left to right: Data, TGV, proposed, ground truth.}
\label{fig_deconv_supplement}
\end{figure}

\subsection{Super-resolution}\label{SM_section_super-resolution}
In the experiments of this section, our goal is to increase the resolution of a given image. Accordingly, the forward operator is a downsampling operator. For our experiments, we use averaging as a downsampling method. That is, given a scaling factor $s\in \mathbb{N}$, a divisor of $N_x$ and $N_y$, we define $A:\mathbb{R}^{N_x\times N_y}\rightarrow\mathbb{R}^{(N_x/s) \times (N_y/s)}$ as
\[ (Au)_{i,j} = \frac{1}{s^2}\sum\limits_{i',j'=1}^s u_{s(i-1)+i',s(j-1)+j'},\]
for $i=1,\ldots , N_x/s$, $j=1,\ldots , N_y/s$ and
\begin{equation*}
\Dc^1_{y} \equiv 0, \quad \Dc^2_{y} = \Ic_{\{y\}},
\end{equation*}
where $y$ is the given low resolution image and no noise is added to $y$. As a downsampling rate for the forward operator, we use $s=4$.  For the \emph{fish} and \emph{zebra} images, we used a subsampled version of the full-resolution image as data, where subsampling was done with the forward operator. With this, we aim to evaluate the ideal scenario that the forward model is correct. For the \emph{Barbara} image, we used the full-resolution image as data such that no bias from artificial subsampling (and no ground truth) is present.

For the experiments with the DIP, we used the network architecture corresponding to \cite[Figure 1]{deep_image_prior} and, in the code, we set the forward operator as above (default is downsampling with a Lanczos kernel) and the number of iterations to 2500. 
A summary of all results for super-resolution can be found in \cref{fig_supres_supplement}.

\begin{figure}
\centering
\begin{subfigure}{\textwidth}
\centering
\begin{subfigure}[t]{\textwidth}
\includegraphics[width = 0.334\textwidth, valign = t]{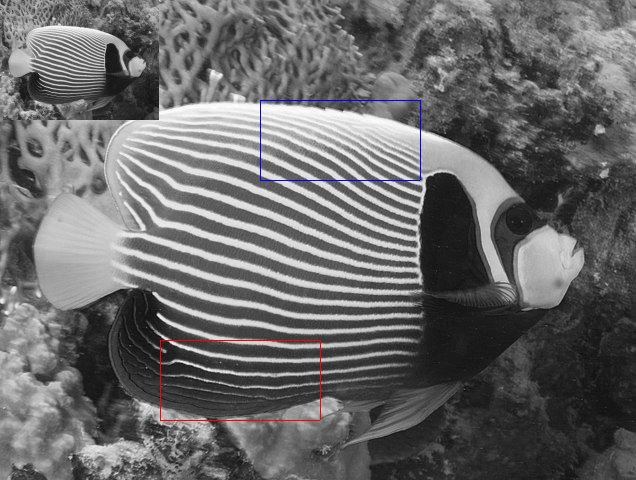}%
\includegraphics[width = 0.333\textwidth, valign = t]{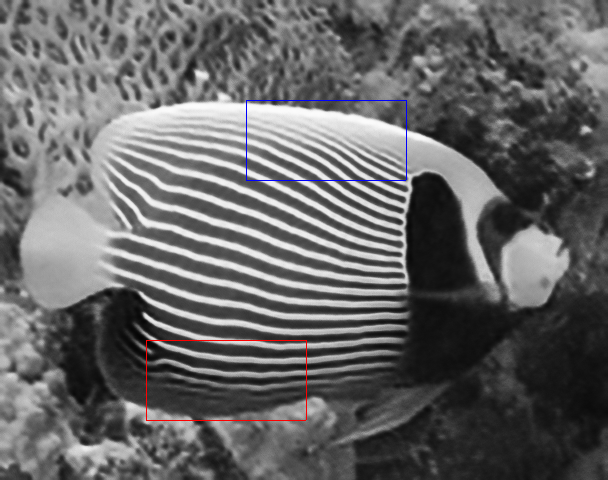}%
\includegraphics[width = 0.333\textwidth, valign = t]{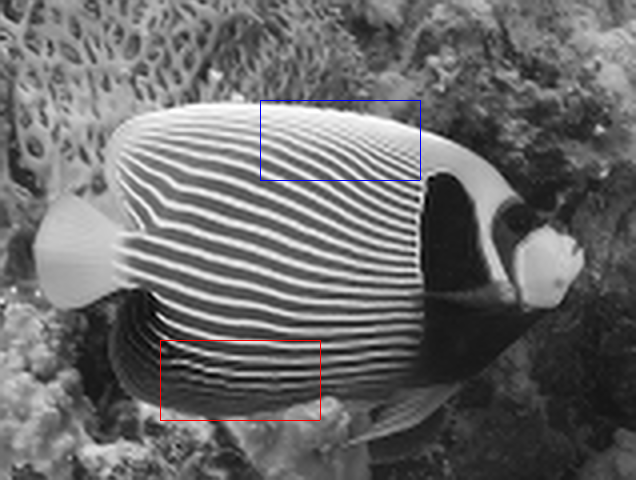}
\end{subfigure}
\begin{subfigure}[t]{0.334\textwidth}
\includegraphics[width = 0.5\textwidth]{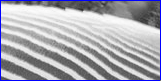}%
\includegraphics[width = 0.5\textwidth]{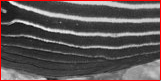}
\end{subfigure}%
\begin{subfigure}[t]{0.333\textwidth}
\includegraphics[width = 0.5\textwidth]{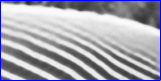}%
\includegraphics[width = 0.5\textwidth]{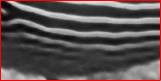}
\end{subfigure}%
\begin{subfigure}[t]{0.333\textwidth}
\includegraphics[width = 0.5\textwidth]{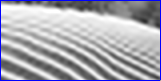}%
\includegraphics[width = 0.5\textwidth]{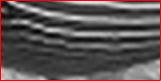}
\end{subfigure}
\end{subfigure}

\centering
\begin{subfigure}{\textwidth}
\includegraphics[width = 0.334\textwidth]{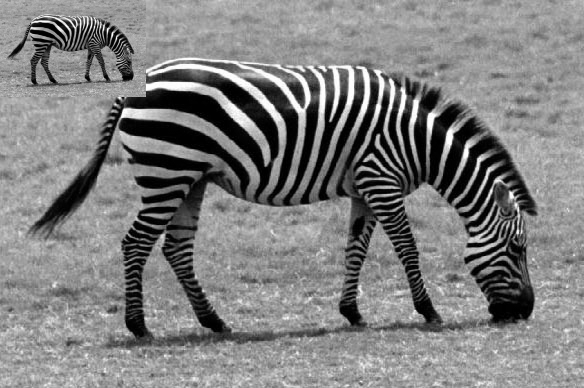}%
\includegraphics[width = 0.333\textwidth]{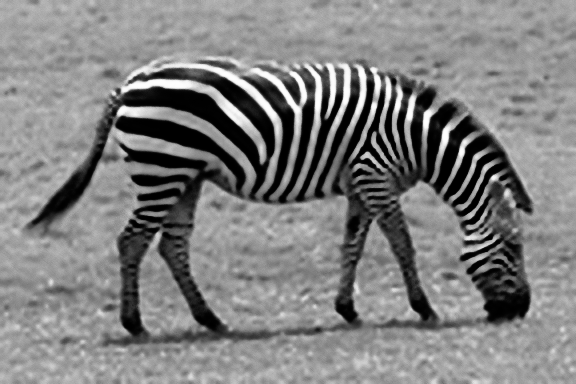}%
\includegraphics[width = 0.333\textwidth]{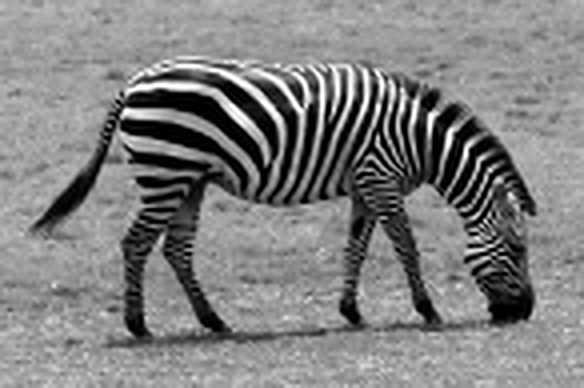}
\end{subfigure}

\centering
\begin{subfigure}{\textwidth}
\flushright
\includegraphics[width = 0.334\textwidth]{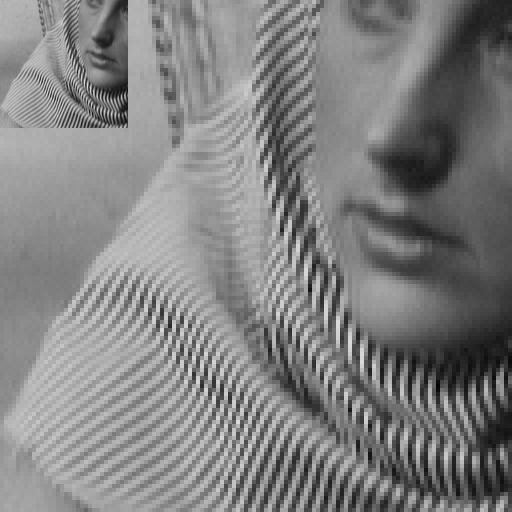}%
\includegraphics[width = 0.333\textwidth]{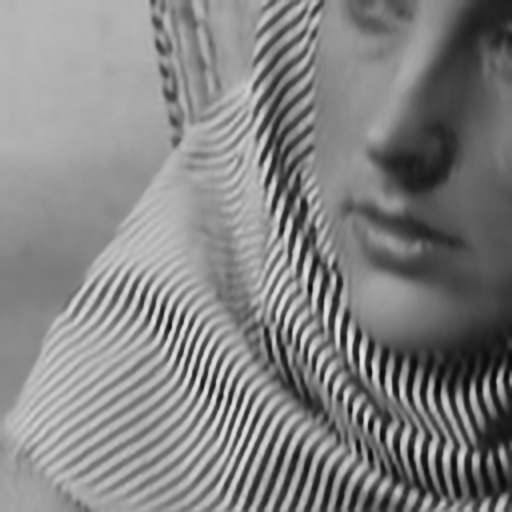}%
\includegraphics[width = 0.333\textwidth]{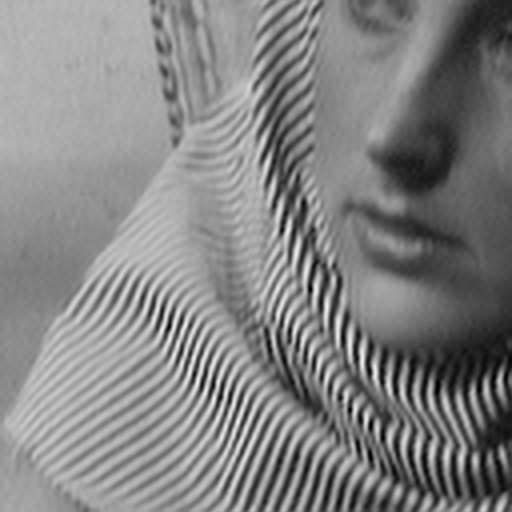}
\end{subfigure}
\caption{Super-resolution by a factor 4. From left to right: Original (low resolution in upper left corner), DIP, proposed. For the \emph{Barbara} image, no high resolution original is available and we show an upsampling by pixel repetition instead.}
\label{fig_supres_supplement}
\end{figure}

\subsection{JPEG decompression} \label{sm:sec_jpeg}
The last experiment we consider is JPEG decompression. Modeling the compression process, the forward operator $A$ is in this case a block-cosine transform, while the data fidelity functional is the convex indicator function of a set $D(y)$ containing all cosine coefficients which, when quantized with the given compression rate, would result in the same coefficients as stored in the given JPEG file $y$. That is, we set $\Dc^1_{y} \equiv 0 $ and $\Dc^2_{y} = \Ic_{D(y)}$. Again, we set the noise $\eta$ to zero in this case. 

For all JPEG decompression experiments with the DIP, we used the architecture as proposed by the authors corresponding to the JPEG experiment shown in \cite[Figure 3]{deep_image_prior}, and optimized the number of iterations for each image, see \cref{table_numiter_dip}. Note that here, not only the data fidelity used in DIP differs from ours, but also the forward operator does. In \cite{deep_image_prior}, the authors simply use the identity as a forward operator, whereas the proposed method makes use of the specific form of data corruption.
A summary of all results for JPEG decompression can be found in \cref{fig_jpeg_supplement}.

\renewcommand\fw{3cm} 
\renewcommand\fwh{1.5cm} 

\renewcommand\ffw{3.75cm} 
\renewcommand\ffwh{1.875cm}

\begin{figure}
\centering
\begin{subfigure}{\textwidth}
\centering
\begin{subfigure}[t]{\fw}
\includegraphics[width = \fw]{images_gen_reg_jpeg_barbara_crop_corrupted_closeups.png}
\includegraphics[width = \fwh]{images_gen_reg_jpeg_barbara_crop_corrupted_detail_1.png}%
\includegraphics[width = \fwh]{images_gen_reg_jpeg_barbara_crop_corrupted_detail_2.png}
\end{subfigure}%
\begin{subfigure}[t]{\fw}
\includegraphics[width = \fw]{images_ulyanov_jpeg_barbara_crop_recon_closeups.png}
\includegraphics[width = \fwh]{images_ulyanov_jpeg_barbara_crop_recon_detail_1.png}%
\includegraphics[width = \fwh]{images_ulyanov_jpeg_barbara_crop_recon_detail_2.png}
\end{subfigure}%
\begin{subfigure}[t]{\fw}
\includegraphics[width = \fw]{images_gen_reg_jpeg_barbara_crop_recon_closeups.png}
\includegraphics[width = \fwh]{images_gen_reg_jpeg_barbara_crop_recon_detail_1.png}%
\includegraphics[width = \fwh]{images_gen_reg_jpeg_barbara_crop_recon_detail_2.png}
\end{subfigure}%
\begin{subfigure}[t]{\fw}
\includegraphics[width = \fw]{images_gen_reg_jpeg_barbara_crop_original_closeups.png}
\includegraphics[width = \fwh]{images_gen_reg_jpeg_barbara_crop_original_detail_1.png}%
\includegraphics[width = \fwh]{images_gen_reg_jpeg_barbara_crop_original_detail_2.png}
\end{subfigure}
\end{subfigure}
\begin{subfigure}{\textwidth}
\centering
\includegraphics[width = \fw]{images_gen_reg_jpeg_cart_text_mix_corrupted.png}%
\includegraphics[width = \fw]{images_ulyanov_jpeg_cart_text_mix_recon.png}%
\includegraphics[width = \fw]{images_gen_reg_jpeg_cart_text_mix_recon.png}%
\includegraphics[width = \fw]{images_gen_reg_jpeg_cart_text_mix_original.png}
\end{subfigure}
\begin{subfigure}{\textwidth}
\centering
\includegraphics[width = \fw]{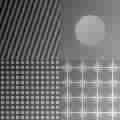}%
\includegraphics[width = \fw]{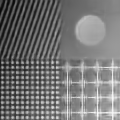}%
\includegraphics[width = \fw]{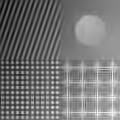}%
\includegraphics[width = \fw]{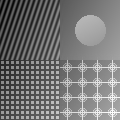}
\end{subfigure}
\caption{JPEG decompression. From left to right: Data, DIP, proposed, ground truth.}
\label{fig_jpeg_supplement}
\end{figure}

%
%
%

%
\printbibliography

\makeatletter\@input{xx.tex}\makeatother